\documentclass{scrartcl}
\usepackage{amsmath}
\usepackage{amsthm}
\usepackage{amssymb}
\usepackage{dsfont} 
\usepackage[colorlinks]{hyperref}
\usepackage{mdwlist} 
\usepackage{bm} 
\usepackage{tikz}
\usepackage{caption}
\usepackage[round,numbers]{natbib} 
\usepackage{scrlayer-scrpage} 
\usepackage{abstract}
\usepackage{array}
\usepackage{ragged2e} 
\usepackage{booktabs}
\usepackage[all]{xy}
\usepackage{color}

\definecolor{all-matroids}{HTML}{000000}
\definecolor{cofinitary}{HTML}{FF9955}
\definecolor{finitary}{HTML}{808000}
\definecolor{finite}{HTML}{ff0000}
\definecolor{finite-oriented}{HTML}{d35f5f}
\definecolor{fin-cofin-regular}{HTML}{cccccc}
\definecolor{reg-fp-ori}{HTML}{0000ff}
\definecolor{conj-fp-ori}{HTML}{0000ff}
\definecolor{orth-ori}{HTML}{008000}
\definecolor{regular}{HTML}{ff00ff}

\usetikzlibrary{arrows,decorations.markings,positioning}
\tikzset{>=latex,every edge/.append style={thick}}

\theoremstyle{plain}
\newtheorem{thm}{Theorem}[section]
\newtheorem{lem}[thm]{Lemma}
\newtheorem{prop}[thm]{Proposition}
\newtheorem{cor}[thm]{Corollary}
\newtheorem{qstn}[thm]{Open questions}

\theoremstyle{definition}
\newtheorem{defn}[thm]{Definition}
\newtheorem{exmp}[thm]{Example}

\theoremstyle{remark}
\newtheorem{rem}[thm]{Remark}

\newcommand{\NN}{\mathbb{N}}
\newcommand{\ZZ}{\mathbb{Z}}
\newcommand{\RR}{\mathbb{R}}
\newcommand{\BC}{\mathcal{B}}
\newcommand{\CC}{\mathcal{C}}
\newcommand{\IC}{\mathcal{I}}
\newcommand{\MC}{\mathcal{M}}

\newcommand{\SC}{\mathcal{S}}
\newcommand{\TC}{\mathcal{T}}
\newcommand{\UC}{\mathcal{U}}
\newcommand{\VC}{\mathcal{V}}
\newcommand{\WC}{\mathcal{W}}
\newcommand{\TE}{2^E}
\newcommand{\TPME}{2^{\pm E}}

\newcommand{\supp}{\underline}
\DeclareMathOperator{\sep}{sep}

\makeatletter
\def\unbfootnote{\gdef\@thefnmark{}\@footnotetext}
\makeatother

\AtEndDocument{%
  \par
  \vspace{2\bigskipamount}
  \begin{tabular}{p{0.95\textwidth}}%
    \textsc{Nathan Bowler: Universit{\"a}t Hamburg, Department of Mathematics, Bundesstra{\ss}e~55 (Geomatikum), 20146~Hamburg, Germany}\\
    \textit{Email address}: \texttt{nathan.bowler@uni-hamburg.de}
  \end{tabular}
  \par
  \bigskip
  \begin{tabular}{p{0.95\textwidth}}%
    \textsc{Winfried Hochst{\"a}ttler: FernUniversit{\"a}t in Hagen, Fakult{\"a}t f{\"u}r Mathematik und Informatik, 58084~Hagen, Germany}\\
    \textit{Email address}: \texttt{winfried.hochstaettler@fernuni-hagen.de}
  \end{tabular}
  \par
  \bigskip
  \begin{tabular}{p{0.95\textwidth}}%
    \textsc{Stefan Kaspar}\\
    \textit{Email address}: \texttt{stefan.kaspar@tu-dortmund.de}
  \end{tabular}}

\begin{document}

\author{Nathan Bowler, Winfried Hochst{\"a}ttler, Stefan Kaspar}
\title{On the Possibilities of Defining Infinite Oriented Matroids}
\date{}
\maketitle
\unbfootnote{2020 \textit{Mathematics Subject Classification}. Primary 05B35; Secondary 05C63}
\unbfootnote{\textit{Keywords}. Cryptomorphic axiom systems, infinite matroids, infinite oriented matroids}

\begin{abstract}
  Is it possible to define cryptomorphic axiom systems for infinite oriented matroids by lifting
  some of the axiom systems for finite oriented matroids to the infinite setting while not losing
  duality in the process? We show that the answer to this question is a twofold "no". First, lifting
  the circuit axioms neither preserves duality nor inheritance of strong circuit elimination in
  minors. Second, although duality is kept intact by translating the orthogonality axioms and an
  axiom system based on the Farkas Lemma, the classes of infinite oriented matroids obtained in
  this way have the property that one is a proper subclass of the other.
\end{abstract}

\section{Introduction}
The recent rediscovery of a more general concept of infinite matroids by \citet{infmatroids},
first investigated by \citet{matroidsandduality}, has led to several cryptomorphic axiom systems
for matroids with the following satisfactory properties: they not only extend the theory of finite
matroids in a natural way to the infinite setting but also allow for duality, one of the key concepts
of finite matroids. In particular, this addresses one of the problems that traditional approaches
to infinite matroids based on so called {\em (co)finitary} matroids exhibit: such matroids may only
have infinite cocircuits or infinite circuits, but not both, breaking duality. It is noteworthy that
Dress addressed and solved this particular problem, among other things, in
\citep{dualtheofininfinmatswcoeffs} by a less general approach to infinite matroids named
{\em matroids with coefficients} in between the works of Higgs and Bruhn et al. Both theories are
linked via so called {\em tame matroids}, that is matroids in the sense of Bruhn et al. that have
the property that any circuit-cocircuit intersection is finite: a given matroid in the sense of
Bruhn et al. corresponds to a matroid with coefficients if and only if it is tame (see for instance
\citet{infmatroids} and \citet{strongdualpmatswithcoeff}).

Like matroids, finite oriented matroids are characterized by a broad variety of cryptomorphic
axiom systems (see for instance \citep{orimatsbook}). It is thus natural to ask whether some
or all of these can be lifted to the infinite setting by basing them on this more general
concept of infinite matroids by Higgs and Bruhn et al. This question is especially relevant
since some of the recent efforts to develop a theory of infinite oriented matroids as well as
their applications still lack the presence of duality (see for instance \citet{finaffinorimats}).
Ideally, one would obtain several cryptomorphic axiom systems for infinite oriented matroids
in this process. However, in this paper we show that this approach fails in several ways:
On the one hand, by merely translating an axiom system to the infinite setting, one does not always
obtain a sensible axiom system for infinite oriented matroids. On the other hand, it is possible to
state axiom systems that yield distinct classes of infinite oriented matroids such that one is a
proper subclass of the other.

As for the first claim, we show that translating the so called circuit axioms for oriented
matroids \citep[Definition 3.2.1]{orimatsbook} to the infinite setting does not lead to a sensible
axiom system for infinite oriented matroids, even if the so called axiom of strong signed circuit
elimination \citep[Theorem 3.2.5]{orimatsbook} is incorporated accordingly. This is due to the fact
that unlike in the finite case strong signed circuit elimination fails to be inherited by minors and
also does not carry over to duals. To assert the validity of these claims, we provide an example
of a matroid together with a suitable pair of signatures for its circuits and cocircuits
and show that it exhibits the desired properties (Example~\ref{exmp:OrthOriMAtWithSSCEAndNonSSCECMinor}).
In particular, one does not have to reach far out into the realm of infinite matroids to find
counterexamples like these since the matroid considered is a finitary one.

To address the second claim, we present two distinct axiom systems for infinite oriented matroids
both of which are compatible with duality and yield classes of infinite oriented matroids that are
closed under taking minors. The first of these axiom systems is a straightforward translation of the
so called orthogonality axioms
for finite oriented matroids \citep[Theorem 3.4.3]{orimatsbook} to the infinite setting. Accordingly,
we call the oriented matroids arising from this axiom system {\em orthogonally orientable}. The class
of orthogonally orientable matroids has duality built in; it is also closed under taking minors
(Theorem~\ref{thm:OrthImplOrthMinors}). Modifying
the prototypical forbidden minors characterization of regular tame matroids given by Bowler and Carmesin
in \citep{exclminorinfmat}, we obtain a forbidden minors characterization of tame orthogonally orientable
matroids (Theorem~\ref{thm:OrthCharByMinors}). This result is in part
due to the very general nature of the orthogonality axioms. The property that a pair of circuit and
cocircuit signatures of an orthogonally orientable matroid has to satisfy is a pure abstraction of
vector space orthogonality that does not refer to additional, stronger properties like (strong) signed
circuit elimination. In the context of finite oriented matroids, this fact has no impact on the different
axiom systems being cryptomorphic. However, with respect
to the theory of infinite oriented matroids it does: strong signed circuit elimination still implies
orthogonality (Proposition~\ref{prop:SignedCircElimImplOrthAxioms}) but the inverse is no longer true
(Example~\ref{exmp:OrthOriMAtWithSSCEAndNonSSCECMinor}). This behavior is also observed by Wenzel in
the context of infinite matroids with coefficients in a finitary fuzzy ring
\citep{dpairsmatscoeffinfuzzyroar}. In this paper, continuing the work of \citet{dualtheofininfinmatswcoeffs},
Wenzel extends Dress' approach to several classes of matroids which need not necessarily be tame. In
particular, he presents a definition of infinite oriented matroids with coefficients in a finitary fuzzy
ring which is also a direct translation of the orthogonality axioms to the infinite setting
\citep[Definition 5.7]{dpairsmatscoeffinfuzzyroar}. This alternative approach to infinite oriented
matroids is thus compatible with our definition of orthogonally orientable matroids. Notably, it also
refers only to orthogonality and does not incorporate additional, stronger properties like (strong)
signed circuit elimination.

The second axiom system for infinite oriented matroids we present is based on the so called {\em Farkas Lemma}
for oriented matroids. This lemma states that every element of an oriented matroid is either contained in a
positive circuit or a positive cocircuit, but not both. We call oriented matroids obtained by this axiom system
{\em FP-orientable}. Our definition of FP-orientable matroids is based on the definition of finite oriented
matroids given by Bachem and Kern in \citep{bachemlpd}, especially \citep[Definition 5.8]{bachemlpd}. As in
the case of orthogonally orientable matroids, duality is already a part of the definition. However, to obtain
the result that the class of FP-orientable matroids is closed under taking minors
(Theorem~\ref{thm:MinorsFPAxiomsOMsAreFPAxiomsOMs}), much stronger requirements than in the finite case must
be posed on the pairs of circuit and cocircuit signatures considered. Consequently, a pair of circuit and
cocircuit signatures belonging to an FP-orientable matroid possesses much stronger properties than one
belonging solely to an orthogonally orientable matroid. For instance, strong signed (co)circuit elimination
is possible for the (co)circuits of an FP-orientable matroid (Corollary~\ref{cor:FPOrientationsImplSSCE})
and a rough analog of the so called {\em Composition Theorem} \citep[Theorem 5.36]{bachemlpd} holds with
respect to its (co)vectors (Corollary~\ref{cor:FPOriMatsAllowConfDecomp}). Examples of FP-orientable matroids
include algebraic cycle matroids (Example~\ref{exmp:AlgCycleMatsAreFPOriMats}) and (co)finitary regular matroids
\citep[Section 5]{circpartinfmats}, see also Example~\ref{exmp:CofinRegMatsAreFPOriMats}. Due to the stronger
requirements for FP-orientability, it is to be expected that not all orthogonally orientable matroids are
FP-orientable. Indeed, subsequently we show that the class of FP-orientable matroids is a proper subclass of the
class of orthogonally orientable matroids (Examples~\ref{exmp:OrthOriMAtWithSSCEAndNonSSCECMinorCont} and
\ref{exmp:CMWithoutFarkasCont}), thus establishing the validity of our second claim.

This paper is organized as follows. In Section~\ref{sec:prelim}, we provide an overview of the
notation and basic results with respect to (oriented) matroid theory that we will use. We then
state the axiom system for orthogonally orientable matroids and examine their properties
in Section~\ref{sec:OrthAxioms}. Note that this section is a streamlined version
of \citep[Section 3]{orthaxiomsinforimats} written by the last two authors. In
Section~\ref{sec:NoPlainCircAxioms}, we consider the axiom of strong signed circuit elimination
and investigate why it fails to be inherited by minors and also does not carry over to duals.
Additionally, we look at some of the links between (strong) signed circuit elimination and
orthogonality. In Section~\ref{sec:FPAxsioms}, we introduce the axiom system for FP-orientable
matroids alongside some technical supplements to simplify the proofs of statements about such
matroids. We then examine the properties of FP-orientable matroids as well as examples and
counterexamples. Finally, we conclude this paper with Section~\ref{sec:OverviewClassesOfOriMats}
where we provide a visual overview of the classes of (oriented) matroids that we refer to
throughout the text.

\section{Preliminaries}\label{sec:prelim}
\subsection{Notation and Terminology}\label{ssec:NotationAndTerminology}
Any notation and terminology regarding matroids and oriented matroids not explained below is
taken from \citet{oxley} and \citet{orimatsbook}, respectively.\\

In the following, we always denote a {\em matroid} by $M$, its dual matroid by $M^*$, and
its finite or infinite {\em ground set} by $E$. If we want to emphasize to which matroid $E$
belongs, then we write $E(M)$ instead of just $E$. If $X \subseteq E$, then we denote its
complement $E \setminus X$ by $\overline{X}$. If $X \subseteq E$ and $e \in E$, then we
abbreviate $X \cup \{e\}$ to $X \cup e$ and $X \setminus \{e\}$ to $X \setminus e$. We denote
the {\em power set} of $E$ by $\TE$.\\

A {\em signed subset} $X$ of a set $E$ is a subset $\supp{X}$
of $E$ together with a partition $(X^+, X^-)$ of $\supp{X}$ where $X^+$ contains the so called
{\em positive elements} of $X$ and $X^-$ the so called {\em negative elements} of $X$. The
set $\supp{X}$ is called the {\em support} of $X$. If $A \subseteq E$ and $X$ is a signed subset
of $E$, then the {\em restriction} of $X$ to $A$ is the signed subset $X|_A$ where
$X|_A^+ = X^+ \cap A$ and $X|_A^- = X^- \cap A$. A signed subset $Y$ of $E$ {\em conforms} to a
signed subset $X$ of $E$ if $Y$ is the restriction of $X$ to $\supp{Y}$. The {\em set of all
signed subsets} of $E$ is denoted by $\TPME$. If $X$ is a signed subset of $E$, then we write
$X(e) = 1$ if $e \in X^+$ and $X(e) = -1$ if $e \in X^-$. If $\supp{X} \ne \emptyset$, then we say
that $X$ is {\em positive} or {\em negative} if $\supp{X} = X^+$ or $\supp{X} = X^-$, respectively.
Two signed subsets $X, Y$ of $E$ are said to be {\em orthogonal} if either
$\supp{X} \cap \supp{Y} = \emptyset$, or the restrictions of $X$ and $Y$ to their intersection are
neither equal nor opposite, i.e., there are $e,f \in \supp{X} \cap \supp{Y}$ such that
$X(e)Y(e) = -X(f)Y(f)$. The {\em separator} of two signed subsets $X, Y$ of $E$ is defined as
$\sep(X,Y) = (X^+ \cap Y^-) \cup (X^- \cap Y^+)$. The {\em opposite} of a signed subset $X$ of $E$
is the signed subset $-X$ where $-X^+ = X^-$ and $-X^- = X^+$. A set of signed subsets
$\SC \subseteq \TPME$ is called {\em symmetric} if $\SC = \{ -X\ |\ X \in \SC \}$. If $X$ is a signed
subset of $E$ and $A \subseteq E$, then we say that the signed subset $_{-A}X$ where
$_{-A}X^+ = (X^+ \setminus A) \cup (X^- \cap A)$ and $_{-A}X^- = (X^- \setminus A) \cup (X^+ \cap A)$
is obtained from $X$ by {\em reorientation} on $A$. Finally, if $\SC \subseteq \TPME$, then we let
$_{-A}\SC = \{ _{-A}X\ |\ X \in \SC \}$.\\

A {\em circuit signature} $\CC$ of a matroid $M$ assigns to each circuit $\supp{C}$ of $M$ two
opposite signed subsets $C$ and $-C$ of $E$ supported by $\supp{C}$. A circuit signature $\CC^*$ of
$M^*$ is also called a {\em cocircuit signature} of $M$.\\

A {\em finite} (or {\em ordinary}) {\em oriented matroid} $\MC$ is a triple $(M, \CC, \CC^*)$
that consists of an ordinary matroid $M$ and a pair of circuit and cocircuit signatures $\CC$
and $\CC^*$ of $M$ that has the property that every signed circuit is orthogonal to every signed
cocircuit. The matroid $M$ is called the {\em underlying matroid} of $\MC$.\\

By $\NN$ we denote the set of natural numbers not including 0 and let $\NN_0 = \NN \cup \{ 0 \}$;
the set of integers and the set of real numbers are denoted by $\ZZ$ and $\RR$, respectively.

\subsection{Infinite Matroids}
We briefly recall two of the cryptomorphic definitions of an (infinite) matroid as given by Bruhn et al.
in \citep{infmatroids} as well as some other basic definitions and state some basic results about (infinite)
matroids.

\begin{defn}\label{defn:MatIndAxioms}
  A set $\IC \subseteq \TE$ is the set of independent sets of a matroid $M$ on a set $E$ if and only
  if it satisfies the following \textbf{independence axioms}:
  \begin{enumerate*}
    \item[(I1)] $\emptyset \in \IC$.
    \item[(I2)] $\IC$ is closed under takings subsets.
    \item[(I3)] For all $I \in \IC \setminus \IC^{\max}$ and $I' \in \IC^{\max}$ there is an
    $x \in I' \setminus I$ such that $I + x \in \IC$.
    \item[(IM)] Whenever $I \subseteq X \subseteq E$ and $I \in \IC$, the set
    $\{ I' \in \IC\ |\ I \subseteq I' \subseteq X \}$ has a maximal element.
  \end{enumerate*}
\end{defn}

\begin{defn}\label{defn:MatCircAxioms}
  A set $\CC \subseteq \TE$ is the set of circuits of a matroid $M$ on a set $E$ if and only if it
  satisfies the following \textbf{circuit axioms}:
  \begin{enumerate*}
    \item[(C1)] $\emptyset \notin \CC$.
    \item[(C2)] No element of $\CC$ is a subset of another.
    \item[(C3)] Whenever $X \subseteq C \in \CC$ and $(C_x\ |\ x \in X)$ is a family
    of elements of $\CC$ such that $x \in C_y \Leftrightarrow x = y$ for all $x,y \in X$,
    then for every $f \in C \setminus \left( \bigcup_{x \in X} C_x \right)$ there exists
    an element $D \in \CC$ such that $f \in D \subseteq \left(C
    \cup \bigcup_{x \in X} C_x \right) \setminus X$.
    \item[(CM)] The set $\IC$ of all $\CC$-independent sets satisfies (IM). These are the
    sets $I \subseteq E$ such that $C \nsubseteq I$ for all $C \in \CC$.
  \end{enumerate*}
  If we want to emphasize to which matroid the set of circuits $\CC$ belongs, then we
  write $\CC(M)$ instead of just $\CC$.
\end{defn}

\begin{thm}
  The independence axioms and the circuit axioms are cryptomorphic:
  \begin{enumerate*}
    \item If a set $\IC \subseteq 2^E$ satisfies the independence axioms, then the set $\CC$ of
      circuits satisfies the circuit axioms with $\IC$ as the set of $\CC$-independent sets.
    \item If a set $\CC \subseteq 2^E$ satisfies the circuit axioms, then the set $\IC$ of
      $\CC$-independent sets satisfies the independence axioms with $\CC$ as the set of circuits.
  \end{enumerate*}
\end{thm}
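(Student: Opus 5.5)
I would prove the two directions separately, using throughout the observation that (IM) for $\IC$ and (CM) for $\CC$ say literally the same thing once we know that $\IC$ is exactly the family of $\CC$-independent sets. So the real work is in (I3) versus (C3), plus the identification of independent sets with $\CC$-independent sets.

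\textbf{Direction 1.} Assume $\IC$ satisfies (I1)--(I3) and (IM). Define $\CC$ as the minimal dependent sets.
\begin{itemize*}
  \item (C1) holds because $\emptyset \in \IC$, and (C2) holds by minimality.
  \item To see that $\IC$ is exactly the set of $\CC$-independent sets, take a dependent set $D$ and show it contains a circuit. If $D$ is finite, this is immediate from finiteness. For infinite $D$, I would first prove that $\IC$ is \emph{not} necessarily finitary, so a direct argument is needed. Apply (IM) to $\emptyset \subseteq D$ to get a maximal independent $B \subseteq D$. Since $D \not\subseteq B$, pick $e \in D \setminus B$. Then $B+e$ is dependent, and by (I2) and (I3) (extending to a base of $B+e$) one shows that $B+e$ contains a unique circuit $C_e$ (the fundamental circuit) with $e \in C_e \subseteq B+e \subseteq D$. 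Uniqueness and existence of fundamental circuits are the standard lemma I would establish first from (I3) and (IM).
  \item With this identification, (CM) is (IM).
  \item For (C3), given $C$, the subset $X$, the circuits $C_x$ and $f$, let $Y = (C \cup \bigcup C_x) \setminus X$ and suppose no circuit $D$ with $f \in D \subseteq Y$ exists. Then $f$ is not in the closure of $Y \setminus f$. Use (IM) to take a maximal independent $I$ with $Y\setminus f \subseteq$-spanning behaviour, i.e.\ a base $B$ of $Y \setminus f$. Then $B + f$ is independent, and each $x \in X$ lies in the closure of $C_x \setminus x \subseteq Y$, hence in the closure of $B+f$. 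Then $C \setminus X \subseteq B+f$ together with $X \subseteq \mathrm{cl}(B+f)$ gives $C \subseteq \mathrm{cl}(B+f)$, and one derives that $f \in \mathrm{cl}(C\setminus f)$ in a way that forces $f \in \mathrm{cl}(B)$, a contradiction.
\end{itemize*}
Making this closure argument rigorous for infinite $X$ is the main obstacle. I would handle it by proving that the closure operator defined via fundamental circuits is idempotent, using (IM) and base exchange.

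\textbf{Direction 2.} Assume $\CC$ satisfies (C1)--(C3) and (CM), and let $\IC$ be the $\CC$-independent sets.
\begin{itemize*}
  \item (I1) holds by (C1), (I2) is trivial, and (IM) is (CM).
  \item The circuits of $\IC$ are exactly $\CC$, by (C2).
  \item For (I3), take $I$ non-maximal and $I'$ maximal, and suppose no $x \in I' \setminus I$ has $I+x$ independent. Then each $x \in I' \setminus I$ has a circuit $C_x \subseteq I + x$ containing $x$. Pick $y$ with $I+y$ independent; we may assume $y \notin I'$. Then $I'+y$ contains a circuit $C$ through $y$.
  \item Apply (C3) to $C$ with $X = C \cap (I' \setminus I)$ and the family $(C_x)$. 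The condition $x \in C_y \Leftrightarrow x = y$ holds because $C_x \subseteq I+x$. With $f = y$, (C3) yields a circuit $D$ with $y \in D \subseteq I + y$, contradicting the independence of $I+y$.
\end{itemize*}
Here the infinite version of (C3) is precisely what makes the argument go through without any finiteness assumptions.
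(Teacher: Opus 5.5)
The paper gives no argument of its own here; it only refers to \citep[Theorem 4.3]{infmatroids}. Your outline follows the standard route to that theorem. Direction~2 is complete and correct as written: with $X=C\cap(I'\setminus I)$ and $f=y$ the hypotheses of (C3) hold, and $C\setminus X\subseteq I+y$ together with $C_x\setminus X\subseteq I$ force $D\subseteq I+y$.

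In Direction~1 the (C3) step is misstated. It is not true that $C\setminus X\subseteq B+f$, because $B$ is only a maximal independent subset of $Y\setminus f$. Also, working with $\mathrm{cl}(B+f)$ cannot produce a contradiction, since that set contains $f$ trivially. Everything has to be phrased relative to $B$:
\begin{itemize*}
\item Since $f\notin C_x$ and $C_x\cap X=\{x\}$, you have $C_x\setminus x\subseteq Y\setminus f$, so $X\subseteq\mathrm{cl}(Y\setminus f)$.
\item Together with $C\setminus(X\cup f)\subseteq Y\setminus f$, this gives $C\setminus f\subseteq\mathrm{cl}(Y\setminus f)=\mathrm{cl}(B)$.
\item Hence $f\in\mathrm{cl}(C\setminus f)\subseteq\mathrm{cl}(\mathrm{cl}(B))=\mathrm{cl}(B)$, contradicting $B+f\in\IC$.
\end{itemize*}
The cardinality of $X$ plays no role here, because $X\subseteq\mathrm{cl}(Y\setminus f)$ is checked elementwise. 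So infinite $X$ is not the real obstacle.

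The real content you defer is that (I3), which only speaks about maximal independent subsets of all of $E$, also holds in every restriction $M|Z$. This is where (IM) enters. Your step ``extending to a base of $B+e$'' needs exactly this, and so does the identity $\mathrm{cl}(A)=\mathrm{cl}(B_A)$ for every maximal independent $B_A\subseteq A$, from which idempotence of $\mathrm{cl}$ follows.

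Here is a short proof of that lemma.
\begin{itemize*}
\item Let $I\subseteq Z$ be independent but not maximal in $Z$, and let $I'\subseteq Z$ be maximal independent in $Z$. Suppose no $x\in I'\setminus I$ has $I+x\in\IC$. Pick $y\in Z\setminus I$ with $I+y\in\IC$; then $y\notin I'$.
\item By (IM), extend $I'$ to a maximal independent $B\subseteq I'\cup\overline{Z}$. This $B$ is a base of $M$. Put $L=B\setminus Z$.
\item By (IM), take $J$ maximal independent with $I+y\subseteq J\subseteq (I+y)\cup L$.
\item $J$ is a base. Otherwise (I3) against $B$ gives $x\in B\setminus J$ with $J+x\in\IC$. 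Such an $x$ lies either in $L$, contradicting the maximality of $J$, or in $I'\setminus I$, contradicting the assumption.
\item $L\not\subseteq J$. Otherwise $I\cup L=J\setminus y$ is independent and not maximal, and (I3) against $B$ produces $x\in I'\setminus I$ with $I+x\in\IC$.
\item Hence $I'\cup(J\cap L)\subsetneq B$ is independent but not maximal. Now (I3) against $J$ yields some $z\in(I\setminus I')\cup\{y\}\subseteq Z\setminus I'$ with $I'+z\in\IC$. This contradicts the maximality of $I'$ in $Z$.
\end{itemize*}
With this lemma, existence and uniqueness of fundamental circuits and idempotence of closure follow in the usual way, and your outline becomes a complete proof.
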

\begin{proof}
  See \citep[Theorem 4.3]{infmatroids}.
\end{proof}

Since circuits and cocircuits infinite matroids can be infinite, it is custom to make the following
distinctions.

\begin{defn}
  A matroid is called \textbf{finitary} if any set whose finite subsets are independent is also
  independent. A \textbf{cofinitary} matroid is the dual of a finitary matroid.
\end{defn}

\begin{prop}
  A matroid is finitary if and only if every circuit is finite.
\end{prop}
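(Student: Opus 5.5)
The plan is to prove the two directions separately, working only with the independence axioms of Definition~\ref{defn:MatIndAxioms}. We use that a circuit is a minimal dependent set, so every dependent set contains a circuit. This follows from the circuit axioms, since dependent sets are exactly the sets that are not $\CC$-independent.

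For the forward direction, suppose $M$ is finitary and let $C$ be a circuit of $M$. Assume, for a contradiction, that $C$ is infinite. Every proper subset of $C$ is independent by minimality. In particular, every finite subset of $C$ is a proper subset and hence independent. Finitarity then forces $C$ itself to be independent, which contradicts the fact that $C$ is a circuit. So every circuit is finite.

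For the converse, suppose every circuit of $M$ is finite. Let $X \subseteq E$ be a set all of whose finite subsets are independent, and assume $X$ is dependent. Then $X$ contains some circuit $C$, by the circuit axioms (any set that is not $\CC$-independent contains an element of $\CC$). By hypothesis $C$ is finite, so $C$ is a finite subset of $X$. It would therefore be independent, contradicting that it is a circuit. Hence $X$ is independent and $M$ is finitary.

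There is no real obstacle here. The only point needing care is to use the cryptomorphism theorem stated above, so that dependence in the independence-axiom sense coincides with containing an element of $\CC$. This holds because $\IC$ is exactly the set of $\CC$-independent sets. Once that identification is made, both directions are immediate from the minimality of circuits.
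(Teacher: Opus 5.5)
Your proof is correct and is the standard argument; the paper gives no proof of its own here and only cites Corollary~3.9 of Bruhn et al. The one non-formal step is that every dependent set contains a circuit. In the infinite setting this does not follow from circuits being minimal dependent sets, and it genuinely needs (IM): the finite subsets of $\NN$ satisfy (I1)--(I3) vacuously, yet $\NN$ is dependent with no minimal dependent subset. So the right justification is your appeal to the cryptomorphism theorem, that $\IC$ is exactly the set of $\CC$-independent sets, rather than the ``so'' in your opening sentence.
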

\begin{proof}
  See \citep[Corollary 3.9]{infmatroids}.
\end{proof}

\begin{defn}
  A matroid is called \textbf{tame} if any circuit-cocircuit intersection is finite.
  Otherwise it is called \textbf{wild}.
\end{defn}

What wild matroids can look like and how to construct certain types of them is detailed by
Bowler and Carmesin in \citep{matswithinfcircocircinter}, for instance.

\begin{exmp}
One type of tame matroids that we will encounter several times in the course of this work are the
so-called {\em uniform matroids of rank $n$}, where $n \in \NN$. Such matroids are finitary and
their cocircuits are the sets missing exactly $n-1$ points.
\end{exmp}

\begin{exmp}\label{exmp:TameRegMats}
  In \citep{exclminorinfmat}, the notion of finite {\em regular} matroids is extended to tame matroids.
  One of the given characterizations states that a tame matroid $M$ is called regular if and only if it
  is {\em signable}, i.e.\ for each circuit $C$ of $M$ and for each cocircuit $U$ of $M$ there exists
  a choice of functions $f_C \colon C \rightarrow \{-1,1\}$ and $g_U \colon U \rightarrow \{-1,1\}$,
  respectively, such that
  \[
    \sum_{e \in C \cap U} f_C(e) g_U(e) = 0,
  \]
  where the sum is evaluated over $\ZZ$. Tame signable matroids bear roughly the same relation to
  ordinary tame matroids that directed graphs do to graphs. Examples of tame regular/signable matroids
  are the finite cycle matroid, the algebraic cycle matroid, and the topological cycle matroid of a
  given graph (see for instance \citep[Subsection 5.3]{exclminorinfmat}).
\end{exmp}

Like in the finite case, the set of circuits of a contraction of a matroid can be directly
characterized as follows.

\begin{prop}\label{prop:CharNOCMinor}
  Let $M$ be a matroid on a set $E$ and let $X \subseteq E$. Then the set of circuits $\CC(M.X)$ of
  the contraction of $M$ to $X$ is given by
  \[
    \CC(M.X) = \min(\{ C \cap X\ |\ C \in \CC(M), C \cap X \ne \emptyset \}).
  \]
\end{prop}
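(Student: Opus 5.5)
We will work with the standard definition $M.X = (M^* | X)^*$, equivalently $M.X = M / \overline{X}$, and with the following description of contractions in the infinite setting, which comes from \citep{infmatroids}: if $B_{\overline X}$ is a base of $M|\overline{X}$, then a set $I \subseteq X$ is independent in $M/\overline{X}$ if and only if $I \cup B_{\overline X}$ is independent in $M$. Let $\mathcal{D} = \{ C \cap X \mid C \in \CC(M),\ C \cap X \neq \emptyset \}$. The claim is $\CC(M.X) = \min(\mathcal{D})$. It suffices to show two things:
\begin{enumerate*}
  \item[(a)] every circuit of $M.X$ lies in $\mathcal{D}$;
  \item[(b)] every member of $\mathcal{D}$ contains a circuit of $M.X$.
\end{enumerate*}
Indeed, given (a) and (b), a minimal element $D$ of $\mathcal{D}$ contains a circuit $D'$ of $M.X$ by (b). By (a), $D' \in \mathcal{D}$, so minimality forces $D' = D$. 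Conversely, take a circuit $D$ of $M.X$ and suppose some $D'' \in \mathcal{D}$ satisfies $D'' \subsetneq D$. Then by (b) $D''$ contains a circuit of $M.X$ properly contained in $D$, which contradicts (C2). So every circuit of $M.X$ is minimal in $\mathcal{D}$.

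For (b), let $C \in \CC(M)$ with $C \cap X \neq \emptyset$, and suppose for contradiction that $C \cap X$ is independent in $M.X$. Fix a base $B$ of $M|\overline{X}$. Then $(C \cap X) \cup B$ is independent in $M$. Each $e \in C \setminus X$ lies in $\overline X$, so either $e \in B$ or $B \cup e$ contains a fundamental circuit $C_e$ with $e \in C_e \subseteq B \cup e$. Let $Y$ be the set of those $e \in C \setminus X$ not in $B$. The family $(C_e \mid e \in Y)$ satisfies the hypothesis of (C3): $e' \in C_e$ implies $e' = e$ for $e,e' \in Y$, because $C_e \setminus e \subseteq B$. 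Pick $f \in C \cap X$. Then $f$ lies in no $C_e$, since $C_e \subseteq \overline X$. Applying (C3) gives a circuit $D$ with $f \in D \subseteq (C \cup \bigcup C_e) \setminus Y \subseteq (C \cap X) \cup B$. This contradicts independence. So $C \cap X$ is dependent in $M.X$ and hence contains a circuit of $M.X$.

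For (a), let $D$ be a circuit of $M.X$. Then $D \cup B$ is dependent in $M$ but $(D \setminus d) \cup B$ is independent for each $d \in D$. Pick $d \in D$. The set $(D \setminus d) \cup B$ is independent while adding $d$ makes it dependent, so there is a circuit $C$ with $d \in C \subseteq D \cup B$. Then $C \cap X \subseteq D$ and $C \cap X \neq \emptyset$. By (b), $C \cap X$ contains a circuit of $M.X$, which must be $D$ by (C2). Hence $D = C \cap X \in \mathcal{D}$.

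The main obstacle is (b). In the infinite setting $C \setminus X$ may be infinite, so the usual finite iterated elimination is unavailable. The plan is to handle this by a single application of the infinite circuit elimination axiom (C3) with the fundamental circuits $C_e$, as above. The only delicate point is checking the disjointness hypothesis of (C3) and that $f$ avoids every $C_e$. The independence characterisation of contraction via a base of $M|\overline{X}$ is taken from \citep{infmatroids}.
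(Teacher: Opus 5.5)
Your proof is correct and has the same structure as the paper's: the paper also reduces the claim to (a) every circuit of $M.X$ equals $C \cap X$ for some circuit $C$ of $M$, and (b) every non-empty $C \cap X$ is dependent in $M.X$, followed by the same minimality argument. The only difference is that the paper cites these two facts (Lemma~\ref{lem:StructCircOfMinors} for (a) and \citep[Corollary 3.6]{infmatroids} for (b)), whereas you prove them yourself from the base characterisation of contraction. For (b) you use a single application of (C3) with fundamental circuits, and for (a) you take a circuit through one $d \in D$ inside $D \cup B$ and then apply (b) and (C2).
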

\begin{proof}
  "$\subseteq$": Let $C' \in \CC(M.X)$. Then by \citep[Lemma 2.3]{exclminorinfmat} there exists
  a $C \in \CC(M)$ such that $C' \subseteq C \subseteq C' \cup \overline{X}$. This implies
  $C \cap X = C'$. Since $C'' \cap X$ is dependent in $M.X$ for any $C'' \in \CC(M)$
  by \citep[Corollary 3.6]{infmatroids} whenever $C'' \cap X \ne \emptyset$, there cannot
  exist a $C'' \in \CC(M)$ such that $\emptyset \ne C'' \cap X \subset C \cap X = C'$.
  Thus $C \cap X$ is minimal.\\
  "$\supseteq$": Let $C \in \CC(M)$ such that $\emptyset \ne C \cap X$ is minimal. Then by
  \citep[Corollary 3.6]{infmatroids} $C \cap X$ is dependent in $M.X$ and there exists a
  $C' \in \CC(M.X)$ such that $C' \subseteq C \cap X$. As shown in "$\subseteq$", there exists a
  $C'' \in \CC(M)$ such that $C' = C'' \cap X \subseteq C \cap X$. Since $C \cap X$ is
  minimal, this implies that $C' = C \cap X$.
\end{proof}

\begin{lem}\label{lem:StructCircOfMinors}
  Let $M$ be a matroid on $E = X \dot{\cup} F \dot{\cup} G$ and let $C'$ be a circuit of
  the minor $M\!/F\!\setminus\!G$. Then there is a circuit $C$ of $M$ such that
  $C' \subseteq C \subseteq C' \cup F$.
\end{lem}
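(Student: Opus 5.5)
The plan is to reduce the statement to the two basic cases of deletion and contraction. For both we use the fact that $M/F\setminus G = (M/F)\setminus G = (M\setminus G)/F$; in the infinite setting of Bruhn et al.\ minors commute in this way, and $M/F$ is the contraction of $M$ to $X\cup G$, written $M.(X\cup G)$ in the notation of Proposition~\ref{prop:CharNOCMinor}.

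First we handle the deletion. The circuits of $(M/F)\setminus G$ are exactly the circuits of $M/F$ that avoid $G$. Deletion is restriction, and independence in $N\setminus G$ is independence in $N$ of subsets of $X$. Hence $C'$ is a circuit of $M/F$ with $C'\subseteq X$.

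Next we handle the contraction. We apply Proposition~\ref{prop:CharNOCMinor} with the set $X\cup G$ in place of $X$. This gives $C' = C\cap (X\cup G)$ for some $C\in\CC(M)$ whose intersection with $X\cup G$ is nonempty and minimal. Since $C'\subseteq X$, we get $C\cap G=\emptyset$, and so $C \subseteq (C\cap(X\cup G))\cup F = C'\cup F$. Trivially $C'\subseteq C$. This yields $C'\subseteq C\subseteq C'\cup F$.

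The main obstacle is justifying that the minor $M/F\setminus G$ can be computed as $(M/F)\setminus G$, i.e.\ that the notation means contract then delete and that this agrees with the other order. If the paper defines $M/F\setminus G$ directly as $(M/F)\setminus G$, no commutation is needed and the argument above is complete. Otherwise I would cite the standard fact from \citep{infmatroids} that deletion and contraction of disjoint sets commute; this follows from $(M/F)^* = M^*\setminus F$ together with the dual statement. Alternatively, one can invoke \citep[Lemma 2.3]{exclminorinfmat} directly, as was already done in the proof of Proposition~\ref{prop:CharNOCMinor}.
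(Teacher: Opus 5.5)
Your deduction from Proposition~\ref{prop:CharNOCMinor} is formally valid, but in this paper it is circular. You only use the inclusion ``$\subseteq$'' of that proposition for $M/F=M.(X\cup G)$, namely that every circuit $C'$ of $M/F$ equals $C\cap(X\cup G)$ for some circuit $C$ of $M$. This is precisely the case $G=\emptyset$ of the lemma. The paper proves that inclusion by invoking \citep[Lemma 2.3]{exclminorinfmat}, which is the very result the paper's proof of the present lemma simply cites; your closing ``alternatively'' is therefore exactly the paper's proof. So your argument only reduces the general case to the contraction case via the trivial deletion step. The substance of the statement, the contraction case, is assumed rather than proved. The obstacle you single out, commuting deletion and contraction, is not the real issue: $M/F\setminus G$ means $(M/F)\setminus G$, and commutation is standard in \citep{infmatroids} anyway.

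For a self-contained proof you need an independent argument for $G=\emptyset$. For example, by (IM) pick a maximal independent subset $B_F$ of $F$. Then a set $I\subseteq E\setminus F$ is independent in $M/F$ if and only if $I\cup B_F$ is independent in $M$ (see \citep{infmatroids}). For a circuit $C'$ of $M/F$, the set $C'\cup B_F$ is dependent in $M$, hence contains a circuit $C$ of $M$ by the cryptomorphism of the independence and circuit axioms. On the other hand, $(C'\setminus e)\cup B_F$ is independent for every $e\in C'$, so $C$ cannot miss any $e\in C'$. This gives $C'\subseteq C\subseteq C'\cup B_F\subseteq C'\cup F$. Combined with your deletion step, this proves the lemma without circularity.
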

\begin{proof}
  See \citep[Lemma 2.3]{exclminorinfmat}.
\end{proof}

The following well-known property of circuits and cocircuits of finite matroids carries
over to the circuits and cocircuits of infinite matroids.

\begin{lem}\label{lem:2ElemCircCocircInters}
  Let $M$ be a matroid on $E$ and $C \in \CC$. For any two elements $e,f \in C$, there is a
  cocircuit $D$ of $M$ such that $C \cap D = \{e,f\}$.
\end{lem}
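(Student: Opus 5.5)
We will prove the lemma by contracting and deleting down to the two elements $e,f$, finding a cocircuit there, and lifting it back to $M$.

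Set $X = \{e,f\}$ and consider the minor $N = M/(C \setminus X)$, which lives on the ground set $\overline{C \setminus X}$. Restricting further, consider $N' = M/(C\setminus X)\setminus (\overline{C})$, whose ground set is just $\{e,f\}$.

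\textbf{Step 1: the minor $N'$ is a two-element circuit.} Because $C \setminus X$ is a proper subset of the circuit $C$, it is independent. By Proposition~\ref{prop:CharNOCMinor}, applied to the contraction onto $\overline{C\setminus X}$, the set $C \cap \overline{C\setminus X} = \{e,f\}$ is a nonempty intersection. It is dependent in $M.\overline{C\setminus X}$, since intersections of circuits with $X$ are dependent in the contraction (\citep[Corollary 3.6]{infmatroids}). Hence $\{e,f\}$ contains a circuit of $N$.

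This circuit is neither $\{e\}$ nor $\{f\}$. Indeed, if $\{e\}$ were a circuit of $N$, Lemma~\ref{lem:StructCircOfMinors} would give a circuit $D$ of $M$ with $e \in D \subseteq (C\setminus X)\cup e \subsetneq C$, contradicting (C2). So $\{e,f\}$ is a circuit of $N$, and therefore of its restriction $N'$. It follows that $N'$ is the matroid on $\{e,f\}$ of rank $1$ with $e,f$ parallel. Its unique cocircuit is $\{e,f\}$.

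\textbf{Step 2: lift the cocircuit to $M$.} Cocircuits of $N' = M/(C\setminus X)\setminus\overline{C}$ are the circuits of the dual minor $M^*\setminus(C\setminus X)/\overline{C}$. Apply Lemma~\ref{lem:StructCircOfMinors} to $M^*$ with contracted set $F=\overline{C}$ and deleted set $G = C\setminus X$. This yields a circuit $D$ of $M^*$, i.e.\ a cocircuit of $M$, with
\[
\{e,f\} \subseteq D \subseteq \{e,f\} \cup \overline{C}.
\]
Hence $C \cap D = \{e,f\}$, as required.

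The main obstacle is Step 1: justifying that $\{e,f\}$ is exactly a circuit in the minor. This needs the characterisation of circuits in contractions together with minimality of $C$, and also the standard identity $(M/F\setminus G)^* = M^*\setminus F/G$ for infinite matroids, which holds in the Bruhn et al.\ setting \citep{infmatroids}. Once that is in place, Step 2 is a direct application of Lemma~\ref{lem:StructCircOfMinors}.
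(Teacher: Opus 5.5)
Your proof is correct, but it takes a different route from the one the paper relies on. The paper gives no argument of its own and simply cites \citep[Lemma 2.2]{exclminorinfmat}.

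The usual proof of this fact is direct. Extend the independent set $C \setminus e$ to a base $B$ of $M$ (possible by (IM)). Let $D$ be the fundamental cocircuit of $f$ with respect to $B$, i.e.\ the unique cocircuit contained in $\overline{B} \cup f$. Then $C \cap D \subseteq (C \cap \overline{B}) \cup f = \{e,f\}$, and since a circuit never meets a cocircuit in exactly one element, $C \cap D = \{e,f\}$.

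You instead reduce to the two-element minor $M/(C\setminus\{e,f\})\!\setminus\!\overline{C} \cong U_{1,2}$ and lift its unique cocircuit using the dual form of Lemma~\ref{lem:StructCircOfMinors}.
\begin{itemize}
\item \emph{Gain:} both $e$ and $f$ land in $D$ automatically, because the lifted $D$ contains the whole minor cocircuit $\{e,f\}$. So you never need the ``no circuit--cocircuit intersection of size one'' fact. Your argument also mirrors the lifting pattern the paper itself uses in the proof of Lemma~\ref{lem:OrthImplUniSigInh}.
\item \emph{Cost:} the machinery is heavier. You need minor duality $(M/F\!\setminus\!G)^* = M^*\!\setminus\!F/G$ in the Bruhn et al.\ setting, and Lemma~\ref{lem:StructCircOfMinors}, whose standard proof already goes through a base of the contracted set. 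The base/fundamental-cocircuit argument is therefore shorter and more elementary.
\end{itemize}
That standard proof of Lemma~\ref{lem:StructCircOfMinors} does not use the present lemma, so there is no circularity.

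Two cosmetic points on Step~1:
\begin{itemize}
\item The dependence of $\{e,f\}$ in $N$ follows most directly from the independence of $C\setminus\{e,f\}$, which you note but never use. Indeed, $\{e,f\}$ is $N$-dependent if and only if $\{e,f\}\cup(C\setminus\{e,f\}) = C$ is $M$-dependent.
\item You use the letter $X$ both for $\{e,f\}$ and for the ground set of the contraction in Proposition~\ref{prop:CharNOCMinor}; rename one of them.
\end{itemize}
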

\begin{proof}
  See \citep[Lemma 2.2]{exclminorinfmat}.
\end{proof}

Unions of circuits or cocircuits of a matroid are sometimes of special interest and thus are given
a dedicated name to make referring to them more easy.

\begin{defn}
  A \textbf{scrawl} of a matroid $M$ is a union of circuits of $M$. Likewise, a \textbf{coscrawl}
  of a matroid $M$ is a union of cocircuits of $M$. Coscrawls are also often referred to as
  {\em open sets}.
\end{defn}

When working with scrawls, the following results are useful.

\begin{lem}\label{lem:ScrawlChar}
  Let $M$ be a matroid on $E$ and $V \subseteq E$. Then the following are equivalent:
  \begin{enumerate*}
    \item $V$ is a scrawl of $M$.
    \item $V$ never meets a cocircuit of $M$ just once.
    \item $V$ never meets a coscrawl of $M$ just once.
  \end{enumerate*}
\end{lem}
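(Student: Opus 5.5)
We prove the cycle (1)$\Rightarrow$(2)$\Rightarrow$(3)$\Rightarrow$(1), relying on the standard fact that in matroids in the sense of Bruhn et al.\ a circuit and a cocircuit never meet in exactly one element, together with Lemma~\ref{lem:2ElemCircCocircInters} and the (IM)-based existence of bases.

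For (1)$\Rightarrow$(2), let $V=\bigcup_{i} C_i$ with $C_i\in\CC(M)$, and suppose some cocircuit $D$ satisfies $V\cap D=\{e\}$. Choose $i$ with $e\in C_i$. Then $C_i\cap D=\{e\}$, because $C_i\cap D\subseteq V\cap D$. This contradicts the fact that a circuit and a cocircuit cannot meet in exactly one element. That fact holds in infinite matroids: given $e\in C\cap D$, extend $C\setminus e$ to a base $B$ using (IM). Then $E\setminus B$ contains a cocircuit, and the fundamental cocircuit of $e$ with respect to $B$ must meet $C$ again. We can also cite it directly from \citep{infmatroids}.

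For (2)$\Rightarrow$(3), let $W=\bigcup_j D_j$ be a coscrawl and suppose $V\cap W=\{e\}$. Pick $j$ with $e\in D_j$. Then $V\cap D_j=\{e\}$, which contradicts (2). This is the same argument as in the first step.

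For (3)$\Rightarrow$(1), it suffices to show that every $e\in V$ lies in a circuit contained in $V$. Fix $e\in V$ and suppose no such circuit exists. Then $e$ is not in the closure of $V\setminus e$ inside the restriction $M|V$. Concretely, extend $\emptyset$ to a maximal independent subset $I$ of $V\setminus e$ using (IM). Then $I\cup e$ is independent, since otherwise the circuit inside $I\cup e$ would contain $e$ and lie in $V$. Extend $I\cup e$ to a base $B$ of $M$ using (IM) again. The fundamental cocircuit $D$ of $e$ with respect to $B$ is contained in $(E\setminus B)\cup e$ and contains $e$. Any $f\in D\cap V$ with $f\neq e$ lies in $V\setminus B$, so $f\in V\setminus(I\cup e)$. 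By maximality of $I$, the fundamental circuit of $f$ with respect to $I$ lies in $I\cup f$. This circuit meets $D$ only in $f$, because $D\cap B=\{e\}$ and $e\notin I$. That contradicts the circuit--cocircuit fact. Hence $D\cap V=\{e\}$, and $D$ is a coscrawl meeting $V$ exactly once, contradicting (3).

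The main obstacle is making the fundamental-circuit and fundamental-cocircuit constructions rigorous in the infinite setting, specifically the existence of a cocircuit $D$ with $D\cap B=\{e\}$. I would obtain $D$ by observing that $B\setminus e$ is not spanning, so $E\setminus(B\setminus e)$ contains a cocircuit, i.e.\ a circuit of $M^*$. This cocircuit must contain $e$, since $E\setminus B$ is independent in $M^*$. The remaining steps need only the cited duality facts from \citep{infmatroids}.
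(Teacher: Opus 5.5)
Your proof is correct. The paper gives no argument of its own and only cites \citep[Lemma 2.6]{exclminorinfmat}, so yours is a self-contained substitute. Its one nontrivial step, (3)$\Rightarrow$(1), is sound. The maximal independent $I\subseteq V\setminus e$ gives an independent set $I\cup e$ and a base $B\supseteq I\cup e$. The cocircuit $D$, obtained as an $M^*$-circuit inside the $M^*$-dependent set $(E\setminus B)\cup e$, then satisfies $D\cap V=\{e\}$, exactly as you argue.

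One thing needs fixing. Your sketch of the fact that a circuit $C$ and a cocircuit $D$ never meet in exactly one element does not work as written:
\begin{itemize}
\item If $C\setminus e\subseteq B$ for a base $B$, then $e\notin B$, so there is no fundamental cocircuit of $e$ with respect to $B$.
\item $E\setminus B$ is a base of $M^*$, so it contains no cocircuit at all.
\end{itemize}
Both (1)$\Rightarrow$(2) and your (3)$\Rightarrow$(1) depend on this fact. Either cite it from \citep{infmatroids}, or argue directly. $E\setminus D$ is a hyperplane and hence closed, and $C\cap D=\{e\}$ gives $C\setminus e\subseteq E\setminus D$. Therefore $e\in\mathrm{cl}(C\setminus e)\subseteq E\setminus D$, contradicting $e\in D$.

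With that repaired, the proof is complete.
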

\begin{proof}
  See \citep[Lemma 2.6]{exclminorinfmat}.
\end{proof}

\begin{cor}\label{cor:StructScrawlsOfMinor}
  Let $M$ be a matroid on $E = X \dot{\cup} F \dot{\cup} G$ and let $V' \subseteq X$. Then
  $V'$ is a scrawl of the minor $M\!/F\!\setminus\!G$ if and only if there is a scrawl $V$
  of $M$ with $V' \subseteq V \subseteq V' \cup F$.
\end{cor}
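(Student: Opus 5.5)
We prove Corollary~\ref{cor:StructScrawlsOfMinor} by combining Lemma~\ref{lem:StructCircOfMinors} for one direction with the characterization of scrawls in Lemma~\ref{lem:ScrawlChar} for the other. Throughout, write $N = M\!/F\!\setminus\!G$. We use the standard facts that the dual of a minor is the corresponding minor of the dual, $N^* = M^*\!\setminus\!F/G$, and that every cocircuit of $N^*\!$'s dual, i.e.\ every cocircuit of $N$, has the form $D \cap X$ for some cocircuit $D$ of $M$ with $D \subseteq X \cup G$. This is Lemma~\ref{lem:StructCircOfMinors} applied to $M^*$ with the roles of $F$ and $G$ swapped.

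\emph{Forward direction.} Suppose $V'$ is a scrawl of $N$, so $V' = \bigcup_{i \in I} C'_i$ with each $C'_i \in \CC(N)$. By Lemma~\ref{lem:StructCircOfMinors}, for each $i$ we can choose a circuit $C_i$ of $M$ with $C'_i \subseteq C_i \subseteq C'_i \cup F$. Set $V = \bigcup_i C_i$. Then $V$ is a scrawl of $M$, and since each $C_i$ meets $X$ exactly in $C'_i$, we get $V' \subseteq V \subseteq V' \cup F$. The case $V' = \emptyset$ is covered by taking $V = \emptyset$, the empty union.

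\emph{Backward direction.} Suppose $V$ is a scrawl of $M$ with $V' \subseteq V \subseteq V' \cup F$. By Lemma~\ref{lem:ScrawlChar}, it suffices to show that $V'$ never meets a cocircuit of $N$ in exactly one element. Let $D'$ be a cocircuit of $N$. By the dual form of Lemma~\ref{lem:StructCircOfMinors} noted above, there is a cocircuit $D$ of $M$ with $D' \subseteq D \subseteq D' \cup G$. Now
\[
  V \cap D \subseteq (V' \cup F) \cap (D' \cup G) = V' \cap D',
\]
because $X$, $F$ and $G$ are pairwise disjoint. Conversely, $V' \cap D' \subseteq V \cap D$ holds trivially, so $V \cap D = V' \cap D'$. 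Since $V$ is a scrawl of $M$, Lemma~\ref{lem:ScrawlChar} gives $|V \cap D| \ne 1$. Hence $|V' \cap D'| \ne 1$, and $V'$ is a scrawl of $N$ by Lemma~\ref{lem:ScrawlChar} again.

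The main point requiring care is the duality step: the identity $(M/F\setminus G)^* = M^*\setminus F/G$ for infinite matroids. This is standard in the framework of Bruhn et al.\ \citep{infmatroids}, and with it the rest of the argument is bookkeeping with disjoint unions.
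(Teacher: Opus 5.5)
Your proof is correct. The paper itself gives no argument here; it only cites \citep[Corollary 2.7]{exclminorinfmat}. Your proof is essentially the standard derivation of that result from the two lemmas stated just before it: you lift circuits via Lemma~\ref{lem:StructCircOfMinors} for the forward direction, and for the converse you lift cocircuits of the minor via the dual of that lemma, observe $V \cap D = V' \cap D'$, and apply Lemma~\ref{lem:ScrawlChar}.
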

\begin{proof}
  See \citep[Corollary 2.7]{exclminorinfmat}.
\end{proof}

\section{Orthogonality Axioms}\label{sec:OrthAxioms}
Any sensible definition of infinite oriented matroids encloses the following two
theoretical aspects: First, the definition should enable the concept of duality, i.e.\ it
should be possible to define and conclude the existence of the dual of an infinite
oriented matroid. Second, for a class of oriented matroids to be closed under taking
minors, it is essential that the signature of the circuits and cocircuits of an oriented
matroid is passed on to the circuits and cocircuits of its minors in a unique and compatible
way. Keeping these goals in mind, we begin our investigation of possible definitions of infinite
oriented matroids by examining how the so called orthogonality axioms (see for instance
\citet[Section 3.4]{orimatsbook}) behave in the infinite setting. As we shall see, the definitions
involved in those axioms extend to infinite sets and infinite signed subsets without further
modification. Even more importantly, orthogonality allows the definition of a class of infinite
oriented matroids which explicitly supports duality and is closed under taking minors.

\subsection{Orthogonally Orientable Matroids}
Recall from Section~\ref{ssec:NotationAndTerminology} that orthogonality as an abstract property
of signed subsets is defined as follows.

\begin{defn}\label{defn:OrthSS}
  Two signed subsets $X, Y$ are said to be \textbf{orthogonal}, denoted by $X \perp Y$, if either
  $\supp{X} \cap \supp{Y} = \emptyset$, or the restrictions of $X$ and $Y$ to their
  intersection are neither equal nor opposite, i.e., there are $e,f \in \supp{X} \cap
  \supp{Y}$ such that $X(e)Y(e) = -X(f)Y(f)$.
\end{defn}

Like in the finite case, matroids whose circuits and cocircuits can be signed in such a way that
the corresponding circuit and cocircuit signatures satisfy the orthogonality property from
Definition~\ref{defn:OrthSS} are of special interest.

\begin{defn}\label{defn:OrthOrient}
  Let $M$ be a matroid on a set $E$. Let $\CC$ be a circuit signature of $M$ and $\CC^*$ be a cocircuit
  signature of $M$. Then the triple $\MC = (M, \CC, \CC^*)$ is called an
  \textbf{orthogonally oriented matroid (on $\bm{E}$)} if and only if $\CC$ and
  $\CC^*$ satisfy one of the following two equivalent conditions:
  \begin{itemize*}
    \item[(O)] $\forall C \in \CC, U \in \CC^*: C \perp U$,
    \item[(O')] $\forall C \in \CC, U \in \CC^*: \sep(C, U) \ne \emptyset
      \Leftrightarrow \sep(C, -U) \ne \emptyset$.
  \end{itemize*}
  In this case we say that $M$ is \textbf{orthogonally orientable (on $\bm{E}$)} and that the pair
  $\CC, \CC^*$ \textbf{provides an orthogonal orientation of $\bm{M}$ (on $\bm{E}$)}. The matroid
  $M$ is also referred to as the \textbf{underlying matroid of $\bm{\MC}$}.
\end{defn}

\begin{exmp}\label{exmp:FinOriMatsAreOrthOriMats}
  Since Definition~\ref{defn:OrthOrient} corresponds exactly to the definition of finite oriented
  matroids when $E$ is restricted to finite sets, all finite oriented matroids obviously are orthogonally
  oriented matroids (see \citet[Theorem 3.4.3]{orimatsbook})).
\end{exmp}

As one would expect, matroids that are induced by (infinite) directed graphs are orthogonally
orientable matroids, as the following example explains.

\begin{exmp}\label{exmp:TameRegMatsAreOrthOriMats}
  Recall from Example~\ref{exmp:TameRegMats} that every tame regular matroid is signable. The signing
  of such a matroid $M$ induces a circuit signature $\CC$ of $M$ if we also take the opposite of any
  signed circuit into consideration. In the same way, we obtain a cocircuit signature $\CC^*$ of $M$.
  Furthermore, this pair of circuit and cocircuit signatures provides an orthogonal orientation of
  $M$. Any tame regular matroid is thus an orthogonally orientable matroid. In particular, the finite
  cycle matroid, the algebraic cycle matroid, and the topological cycle matroid of a graph are
  orthogonally orientable.
\end{exmp}

\subsection{Minors and Properties of Orthogonally Oriented Matroids}\label{ssec:MinorsAndPropsOrthOriMats}
From Definition~\ref{defn:OrthOrient} it follows directly that the dual matroid of every
orthogonally orientable matroid is an orthogonally orientable matroid as well. Another
consequence of this definition is that the circuit and cocircuit signatures belonging to an
orthogonally orientable matroid are passed on to minors in a unique way. This statement is
rendered more precisely in the following lemma.

\begin{lem}\label{lem:OrthImplUniSigInh}
  Let $(M, \CC, \CC^*)$ be an orthogonally oriented matroid on $E = X \dot{\cup} F \dot{\cup} G$
  according to Definition~\ref{defn:OrthOrient} and $N = M/F\!\setminus\!G$ be a minor of $M$.
  Then $\CC$ and $\CC^*$ induce a circuit signature and a cocircuit signature on $N$ with
  the following properties.
  \begin{enumerate*}
    \item If $\supp{C'}$ is an ordinary circuit of $N$ and $C$ is a signed circuit of $M$
      such that $\supp{C'} \subseteq \supp{C} \subseteq \supp{C'} \cup F$, then
      the signings $C'$ and $-C'$ of $\supp{C'}$ are given by $C' = C|_{\supp{C'}}$ and
      $-C' = -C|_{\supp{C'}}$.
    \item Dually, if $\supp{U'}$ is an ordinary cocircuit of $N$ and $U$ is a signed cocircuit
      of $M$ such that $\supp{U'} \subseteq \supp{U} \subseteq \supp{U'} \cup G$, then the
      signings $U'$ and $-U'$ of $\supp{U'}$ are given by $U' = U|_{\supp{U'}}$ and
      $-U' = -U|_{\supp{U'}}$.
  \end{enumerate*}
\end{lem}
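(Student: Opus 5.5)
The plan is to use Lemma~\ref{lem:StructCircOfMinors} for existence and orthogonality for uniqueness. Existence is immediate: for every circuit $\supp{C'}$ of $N$ that lemma gives a circuit $\supp{C}$ of $M$ with $\supp{C'} \subseteq \supp{C} \subseteq \supp{C'} \cup F$. Restricting the two signings $\pm C$ to $\supp{C'}$ gives two opposite signed subsets supported by $\supp{C'}$. So the only issue is well-definedness: if $C_1, C_2$ are signed circuits of $M$ whose supports both satisfy $\supp{C'} \subseteq \supp{C_i} \subseteq \supp{C'} \cup F$, I must show that $C_1|_{\supp{C'}} = \pm C_2|_{\supp{C'}}$.

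To see this, I will compare signs on pairs of elements. Fix $e \ne f$ in $\supp{C'}$; if $|\supp{C'}| = 1$ there is nothing to show. By Lemma~\ref{lem:2ElemCircCocircInters} applied to the matroid $N$, there is a cocircuit $\supp{D'}$ of $N$ with $\supp{C'} \cap \supp{D'} = \{e,f\}$. Now $\supp{D'}$ is a circuit of $N^* = M^*/G\setminus F$. Hence Lemma~\ref{lem:StructCircOfMinors}, applied to $M^*$ with the roles of $F$ and $G$ swapped, yields a cocircuit $\supp{D}$ of $M$ with $\supp{D'} \subseteq \supp{D} \subseteq \supp{D'} \cup G$. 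Since $X$, $F$ and $G$ are pairwise disjoint,
\[
  \{e,f\} = \supp{C'} \cap \supp{D'} \subseteq \supp{C_i} \cap \supp{D} \subseteq (\supp{C'} \cup F) \cap (\supp{D'} \cup G) = \supp{C'} \cap \supp{D'},
\]
so $\supp{C_i} \cap \supp{D} = \{e,f\}$ for $i = 1,2$.

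Let $D \in \CC^*$ be one of the signings of $\supp{D}$. Condition (O) then forces $C_i(e)D(e) = -C_i(f)D(f)$, that is, $C_i(e)C_i(f) = -D(e)D(f)$. The right-hand side does not depend on $i$, so $C_1(e)C_1(f) = C_2(e)C_2(f)$ for all $e,f \in \supp{C'}$. Now fix $e_0 \in \supp{C'}$ and set $\varepsilon = C_1(e_0)C_2(e_0)$. For every $f \in \supp{C'}$ we get $C_2(f) = C_2(e_0)\,C_1(e_0)C_1(f) = \varepsilon\, C_1(f)$. Hence $C_2|_{\supp{C'}} = \varepsilon\, C_1|_{\supp{C'}}$, and the induced signature is well defined. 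This proves part 1.

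Part 2 follows by the same argument with circuits and cocircuits interchanged, i.e.\ applied to the orthogonally oriented matroid $(M^*, \CC^*, \CC)$ and its minor $N^* = M^*/G\setminus F$. The main point needing care is the bookkeeping in the sandwiching step. One needs the auxiliary cocircuit $\supp{D}$ of $M$ to extend only into $G$, while the circuits extend only into $F$, so that their intersection in $M$ is exactly $\{e,f\}$. Applying Lemma~\ref{lem:StructCircOfMinors} to the dual matroid with $F$ and $G$ swapped is exactly what delivers this.
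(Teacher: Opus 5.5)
Your proof is correct and follows the paper's argument: existence comes from Lemma~\ref{lem:StructCircOfMinors}, and well-definedness comes from a cocircuit of $N$ meeting $\supp{C'}$ in exactly $\{e,f\}$ (Lemma~\ref{lem:2ElemCircCocircInters}), lifted to a cocircuit of $M$ via the dual minor, then combined with orthogonality. The differences are only cosmetic: you show the sign products $C_i(e)C_i(f)$ agree directly instead of arguing by contradiction, and you spell out the disjointness computation and the $F$/$G$ swap for $N^* = M^*/G\setminus F$, which the paper leaves implicit.
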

\begin{proof}
  If $\supp{C'}$ is an ordinary circuit of $N$, then by \cite[Lemma 2.3]{exclminorinfmat} there
  exists a signed circuit $C$ of $M$ such that
  $\supp{C'} \subseteq \supp{C} \subseteq \supp{C'} \cup F$. Thus it is possible to
  orient $\supp{C'}$ as stated in (1). This signing of $\supp{C'}$ does not depend on
  the choice of $C$: Assume for a contradiction that there exists another signed circuit $D$ of $M$ such
  that $\supp{C'} \subseteq \supp{D} \subseteq \supp{C'} \cup F$ and
  $C|_{\supp{C'}} \ne D|_{\supp{C'}}, -D|_{\supp{C'}}$. Then
  there exist $e,f \in \supp{C'}$ such that $C(e) = -D(e)$ and $C(f) = D(f)$.
  Lemma~\ref{lem:2ElemCircCocircInters} lets us choose a cocircuit $\supp{U'}$
  of $N$ such that $\supp{C'} \cap \supp{U'} = \{e,f\}$. Again, by
  \cite[Lemma 2.3]{exclminorinfmat} there exists a signed cocircuit $U$ of $M$ such that
  $\supp{U'} \subseteq \supp{U} \subseteq \supp{U'} \cup G$. This implies
  $\supp{C'} \cap \supp{U'} = \supp{C} \cap \supp{U}
  = \supp{D} \cap \supp{U} = \{e,f\}$ which is not possible since both $C \perp U$ and
  $D \perp U$ hold. Thus we must have $C|_{\supp{C'}} = D|_{\supp{C'}}$ or $C|_{\supp{C'}}
  = -D|_{\supp{C'}}$.\\
  The dual statement (2) of (1) follows by applying the same arguments to the circuits of $N^*$.
\end{proof}

A direct result of Lemma~\ref{lem:OrthImplUniSigInh} is that the class of orthogonally
oriented matroids is closed under taking minors.

\begin{thm}\label{thm:OrthImplOrthMinors}
  Let $(M, \CC, \CC^*)$ be an orthogonally oriented matroid on $E = X \dot{\cup} F \dot{\cup} G$
  and $N = M/F\!\setminus\!G$ be a minor of $M$. Denote the circuit and cocircuit signatures
  induced by $\CC$ and $\CC^*$ on $N$ by $\CC_N$ and $\CC^*_N$, respectively. Then
  $(N, \CC_N, \CC^*_N)$ is an orthogonally oriented matroid on $X$, i.e.\ $N$ is an orthogonally
  orientable matroid.
\end{thm}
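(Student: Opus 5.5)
By Lemma~\ref{lem:OrthImplUniSigInh}, $\CC_N$ and $\CC^*_N$ are well-defined circuit and cocircuit signatures of $N$. So it only remains to verify axiom (O) for them. Fix a signed circuit $C' \in \CC_N$ and a signed cocircuit $U' \in \CC^*_N$, and assume $\supp{C'} \cap \supp{U'} \ne \emptyset$; otherwise there is nothing to show.

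The plan is to lift both sets to $M$ and compare intersections. By Lemma~\ref{lem:StructCircOfMinors} there is a circuit $\supp{C}$ of $M$ with $\supp{C'} \subseteq \supp{C} \subseteq \supp{C'} \cup F$. Applying the same lemma to $M^*$, using $N^* = M^*/G\setminus F$, gives a cocircuit $\supp{U}$ of $M$ with $\supp{U'} \subseteq \supp{U} \subseteq \supp{U'} \cup G$. Choose the signings $C \in \CC$ and $U \in \CC^*$ so that $C|_{\supp{C'}} = C'$ and $U|_{\supp{U'}} = U'$. This is possible by parts (1) and (2) of Lemma~\ref{lem:OrthImplUniSigInh}, replacing $C$ by $-C$ or $U$ by $-U$ if necessary.

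Next I show that the intersection does not change under lifting. Since $F$ and $G$ are disjoint and both are disjoint from $X$, every element of $\supp{C} \cap \supp{U}$ lies in $(\supp{C'} \cup F) \cap (\supp{U'} \cup G)$. An element of $F$ cannot lie in $\supp{U'} \cup G \subseteq X \cup G$, and an element of $G$ cannot lie in $\supp{C'} \cup F \subseteq X \cup F$. Hence $\supp{C} \cap \supp{U} = \supp{C'} \cap \supp{U'}$. On this common set, $C$ agrees with $C'$ and $U$ agrees with $U'$.

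Since $C \perp U$ holds in $M$ and the intersection is nonempty, there are $e,f \in \supp{C} \cap \supp{U}$ with $C(e)U(e) = -C(f)U(f)$. These $e,f$ lie in $\supp{C'} \cap \supp{U'}$, and the products are unchanged when we pass to $C'$ and $U'$. Thus $C' \perp U'$. As $C'$ and $U'$ were arbitrary, $(N, \CC_N, \CC^*_N)$ satisfies (O).

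The only point needing care is the dual lifting step: one has to check that the cocircuits of $N$ really are the circuits of $M^*/G\setminus F$, so that Lemma~\ref{lem:StructCircOfMinors} applies with the roles of $F$ and $G$ swapped. This is standard duality of minors for the infinite matroids of Bruhn et al., and it was already used in the proof of Lemma~\ref{lem:OrthImplUniSigInh}. Everything else is bookkeeping.
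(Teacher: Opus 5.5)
Your proposal is correct and follows essentially the same route as the paper: lift $C'$ and $U'$ to $C \in \CC$ and $U \in \CC^*$ via Lemmas~\ref{lem:StructCircOfMinors} and \ref{lem:OrthImplUniSigInh}, observe that $\supp{C} \cap \supp{U} = \supp{C'} \cap \supp{U'}$, and transfer $C \perp U$ to $C' \perp U'$. Your version also writes out two steps the paper leaves implicit: the disjointness argument for the equality of intersections, and the duality $N^* = M^*/G\setminus F$ used to lift the cocircuit.
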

\begin{proof}
  Let $C'$ be a signed circuit of $N$ and $U'$ be a signed cocircuit of $N$. According to
  Lemmas~\ref{lem:StructCircOfMinors} and \ref{lem:OrthImplUniSigInh}, it is possible to
  choose a signed circuit $C$ and a signed cocircuit $U$ of $M$ such that
  $C' = C|_{\supp{C}}$ and $U' = U|_{\supp{U'}}$. It then follows from $\supp{C'} \cap \supp{U'}
  = \supp{C} \cap \supp{U}$ that $C' \perp U'$ holds.
\end{proof}

Given an orthogonally oriented matroid $\MC = (M, \CC, \CC^*)$ on a set $E$ and a subset
$X \subseteq E$, it is thus sensible to speak of the \textbf{orthogonally oriented restriction
minor $\bm{\MC|X}$}, the \textbf{orthogonally oriented deletion minor
$\bm{\MC\!\setminus\!\overline{X}}$}, and the \textbf{orthogonally oriented contraction minor
$\bm{\MC.X}$ or $\bm{\MC/\overline{X}}$ of $\bm{\MC}$}. For any minor $N$ of $M$, we denote the
circuit and cocircuit signatures induced by $\CC$ and $\CC^*$ on $N$ by $\CC_N$ and $\CC^*_N$,
respectively.\\

Alternatively, orthogonally oriented minors of orthogonally oriented matroids can be characterized
as follows.

\begin{cor}
  Let $\MC= (M, \CC, \CC^*)$ be an orthogonally oriented matroid on a set $E$ and $X \subseteq E$.
  \begin{enumerate*}
    \item $\CC_{M|X} = \CC_{M\!\setminus\!\overline{X}} = \{ C \in \CC\ |\ \supp{C} \subseteq X \}$
      is the circuit signature of an orthogonally oriented matroid on $X$. It is called the
      \textbf{orthogonally oriented restriction of $\bm{\MC}$ to $\bm{X}$}, denoted by $\MC|X$,
      or the \textbf{orthogonally oriented minor obtained by deleting $\bm{\overline{X}}$}, denoted
      by $\MC\!\setminus\!\overline{X}$.
    \item $\CC_{M.X} = \CC_{M/\overline{X}} = \min \{ C|_X\ |\ C \in \CC, \supp{C} \cap X \ne \emptyset \}$
      is a circuit signature  of an orthogonally oriented matroid on $X$. It is called the
      \textbf{orthogonally oriented contraction of $\bm{\MC}$ to $\bm{X}$}, denoted by $\MC.X$, or
      the \textbf{orthogonally oriented minor obtained by contracting $\bm{\overline{X}}$}, denoted
      by $\MC/\overline{X}$.
  \end{enumerate*}
\end{cor}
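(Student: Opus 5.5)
The corollary is essentially a re-expression of Theorem~\ref{thm:OrthImplOrthMinors} for the two special minors. My plan is to identify, for each minor, which signed subsets make up $\CC_N$ as given by Lemma~\ref{lem:OrthImplUniSigInh}, and to check that this set equals the explicit expression in the statement. Orthogonality of the resulting pair then follows directly from Theorem~\ref{thm:OrthImplOrthMinors}.

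For (1), I take $F = \emptyset$ and $G = \overline{X}$, so that $N = M\!\setminus\!\overline{X} = M|X$. The circuits of $M|X$ are exactly the circuits $\supp{C}$ of $M$ with $\supp{C} \subseteq X$. This is immediate for deletions, and it is also the special case $F=\emptyset$ of Lemma~\ref{lem:StructCircOfMinors}. Given such a circuit, the only signed circuits $C$ of $M$ with $\supp{C'} \subseteq \supp{C} \subseteq \supp{C'} \cup \emptyset$ are $\pm C$ itself. Part (1) of Lemma~\ref{lem:OrthImplUniSigInh} therefore yields the signings $\pm C$, so $\CC_{M|X} = \{ C \in \CC\ |\ \supp{C} \subseteq X\}$.

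For (2), I take $F = \overline{X}$ and $G = \emptyset$, so that $N = M.X$.
\begin{itemize*}
  \item[$\subseteq$:] Let $C'$ be a signed circuit of $\CC_{M.X}$. By Lemma~\ref{lem:StructCircOfMinors} there is a $C \in \CC$ with $\supp{C'} \subseteq \supp{C} \subseteq \supp{C'} \cup \overline{X}$, hence $\supp{C} \cap X = \supp{C'}$. By Lemma~\ref{lem:OrthImplUniSigInh}, $C' = C|_X$ up to sign. Minimality of $C|_X$ is inherited from Proposition~\ref{prop:CharNOCMinor}: no $D \in \CC$ has $\emptyset \ne \supp{D} \cap X \subsetneq \supp{C'}$.
  \item[$\supseteq$:] Let $C|_X$ be a minimal element of the set in the statement. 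By Proposition~\ref{prop:CharNOCMinor}, its support $\supp{C} \cap X$ is a circuit $\supp{C'}$ of $M.X$. Since $\supp{C} \subseteq \supp{C'} \cup \overline{X}$, Lemma~\ref{lem:OrthImplUniSigInh} says the induced signing of $\supp{C'}$ is $\pm C|_X$.
\end{itemize*}
Hence $\CC_{M.X}$ equals the stated set. The equalities with $\CC_{M/\overline{X}}$ and $\CC_{M\setminus \overline{X}}$ are just notation, since $M.X = M/\overline{X}$ and $M|X = M\!\setminus\!\overline{X}$.

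The one point needing care is how to read ``$\min$'' for signed subsets. I interpret it as minimality with respect to support, which is what makes it match Proposition~\ref{prop:CharNOCMinor}. I also need two signed circuits with the same minimal restricted support to give the same pair $\pm C|_X$. This well-definedness is exactly the content of Lemma~\ref{lem:OrthImplUniSigInh}, so the expression really defines a circuit signature. Both parts are then orthogonally oriented by Theorem~\ref{thm:OrthImplOrthMinors}.
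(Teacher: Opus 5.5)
Your proposal is correct and takes the paper's approach. The paper's proof simply cites Lemma~\ref{lem:OrthImplUniSigInh}, Theorem~\ref{thm:OrthImplOrthMinors} and Proposition~\ref{prop:CharNOCMinor}, and you have written out how these combine: Lemma~\ref{lem:StructCircOfMinors} lifts circuits, $\min$ is read as support-minimality, and the symmetry of $\CC$ absorbs the ``up to sign''.
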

\begin{proof}
  The statements follow directly from the two previous Lemmas~\ref{lem:OrthImplUniSigInh} and
  \ref{thm:OrthImplOrthMinors}, combined with Proposition~\ref{prop:CharNOCMinor}.
\end{proof}

We now turn our attention to various other properties of orthogonally oriented matroids and
start by examining how circuit and cocircuit signatures of such matroids relate to each other.
Given an arbitrary circuit signature $\CC$ of a matroid $M$, it is not always possible to choose
a cocircuit signature $\CC^*$ such that the pair $\CC, \CC^*$ provides an orthogonal orientation
of $M$. However, if it is possible, then $\CC^*$ is uniquely determined, as the following
proposition shows.

\begin{prop}\label{prop:OrthAxiomsImplUniquenessOfSignature}
  Let $(M, \CC, \CC^*)$ be an orthogonally oriented matroid. If $\tilde{\CC}^*$ is another
  cocircuit signature of $M$ such that the pair $\CC, \tilde{\CC}^*$ satisfies (O) or (O'),
  then $\tilde{\CC}^* = \CC^*$.
\end{prop}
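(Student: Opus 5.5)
The plan is to compare the two signings cocircuit by cocircuit. Fix an ordinary cocircuit $\supp{U}$ of $M$, and let $U \in \CC^*$ and $\tilde U \in \tilde{\CC}^*$ be signings of it. Both signatures assign to $\supp{U}$ a pair of opposite signed subsets. It therefore suffices to show that $\tilde U \in \{U, -U\}$, i.e.\ that the two signings agree up to a global sign. Conditions (O) and (O') are equivalent, so we may work with (O) throughout.

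Suppose for a contradiction that $\tilde U \ne U$ and $\tilde U \ne -U$. Then there are elements $e, f \in \supp{U}$ with $U(e) = \tilde U(e)$ and $U(f) = -\tilde U(f)$; in particular $e \ne f$. Apply Lemma~\ref{lem:2ElemCircCocircInters} to the dual matroid $M^*$. There, $\supp{U}$ is a circuit and the cocircuits are the circuits of $M$. This gives an ordinary circuit $\supp{C}$ of $M$ with $\supp{C} \cap \supp{U} = \{e, f\}$. Let $C \in \CC$ be one of its signings.

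By (O) for the pair $\CC, \CC^*$, the relation $C \perp U$ holds. Since the intersection is exactly $\{e,f\}$, this means $C(e)U(e) = -C(f)U(f)$. By (O) for the pair $\CC, \tilde{\CC}^*$, likewise $C(e)\tilde U(e) = -C(f)\tilde U(f)$. Substituting $\tilde U(e) = U(e)$ and $\tilde U(f) = -U(f)$ into the second equation gives $C(e)U(e) = C(f)U(f)$. Combined with the first equation this forces $C(f)U(f) = 0$, which is impossible since both factors lie in $\{\pm 1\}$. Hence $\tilde U \in \{U,-U\}$. Since this holds for every cocircuit, $\tilde{\CC}^* = \CC^*$.

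The only step that needs care is the existence of a circuit meeting the given cocircuit in exactly two prescribed elements in the infinite setting. Lemma~\ref{lem:2ElemCircCocircInters} provides this once it is applied to $M^*$, using that $(M^*)^* = M$ in the sense of Bruhn et al. Everything else is a sign computation.
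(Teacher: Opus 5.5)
Your proof is correct and follows the paper's argument essentially verbatim: choose $e,f\in\supp{U}$ where the two signings agree and disagree, take a circuit meeting $\supp{U}$ in exactly $\{e,f\}$, and observe that it cannot be orthogonal to both $U$ and $\tilde U$. You also spell out that Lemma~\ref{lem:2ElemCircCocircInters} has to be applied to $M^*$, a small precision the paper leaves implicit.
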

\begin{proof}
  Assume for a contradiction that there is a cocircuit $\supp{U}$ which is signed by $\CC^*$
  and $\tilde{\CC}^*$ in different ways. Denote the cocircuit signed by $\CC^*$ by $U$ and the
  cocircuit signed by $\tilde{\CC}^*$ by $\tilde{U}$. Then $U \ne \tilde{U}, -\tilde{U}$ and there
  exist elements $e, f \in \supp{U}$ such that $U(e) = \tilde{U}(e)$ and $U(f) = -\tilde{U}(f)$.
  By Lemma~\ref{lem:2ElemCircCocircInters} it is possible to pick a signed circuit $C$ such that
  $\supp{C} \cap \supp{U} = \{e,f\}$. Then either $C \perp U$ or $C \perp \tilde{U}$ holds,
  but not both, a contradiction.
\end{proof}

For finite oriented matroids, there is the well known concept of vectors and covectors. These are
to signed circuits and cocircuits of an oriented matroid what scrawls and coscrawls are to circuits
and cocircuits of an ordinary matroid. Indeed, the support of any vector/covector of an oriented
matroid is a scrawl/coscrawl of its underlying ordinary matroid, i.e.\ a union of circuits/cocircuits.
To take the signing of circuits and cocircuits into consideration, vectors and covectors are built by
composing circuits and cocircuits in an ordered fashion. In the infinite setting, it is thus natural
to utilize well-orderings for this purpose. Recall that a {\em well-ordering} $(S, <)$ on a set $S$
is a total order $<$ with the property that every non-empty subset of $S$ has a least element according
to this order. An {\em order isomorphism} between two well-orderings $(S, <), (S', <')$ is a
bijective map $i \colon S \rightarrow S'$ such that for all $s,t \in S$ it holds that $s < t$ if and
only if $i(s) <' i(t)$. A fundamental result states that every well-ordering is order isomorphic to a
unique ordinal number.\\

The definition of vectors and covectors rests on the more general definition of the composition of
signed subsets.

\begin{defn}\label{defn:InfSSComp}
  Let $\VC$ be a set of signed subsets.
  \begin{enumerate}
    \item Let $(\VC, <)$ be a well-ordering on $\VC$, let $\alpha$ be the ordinal number to which
      $(\VC, <)$ is order isomorphic, and denote the corresponding order isomorphism
      $\alpha \rightarrow \VC$ by $i$. Let $V^\beta = i(\beta)$ for all $\beta \in \alpha$. Then
      the signed subset $W$ with the properties
      \begin{enumerate*}
        \item[(i)] $\supp{W} = \bigcup_{V \in \VC} \supp{V}$
        \item[(ii)] $\forall e \in \supp{W}: \left(W(e) = V^\gamma(e),\ \textnormal{where}\ \gamma
          = \min\{ \beta \in \alpha\ |\ V^\beta(e) \ne 0 \}\right)$
      \end{enumerate*}
      is called the \textbf{composition of the signed subsets of $\bm{\VC}$ according to
      $\bm{(\VC, <)}$}.
      In this case we say that $\VC$ is the \textbf{decomposition of $\bm{W}$ according
      to $\bm{(\VC, <)}$}.
    \item A signed subset $W$ is called a \textbf{composition of the signed subsets of $\bm{\VC}$}
      if there exists a well-ordering $(\VC, <)$ on $\VC$ such that $W$ is the composition of the
      signed subsets of $\VC$ according to $(\VC, <)$. In this case we say that $\VC$ is a
      \textbf{decomposition of $\bm{W}$}.
  \end{enumerate}
\end{defn}

\begin{rem}
  For finite sets of signed subsets $\VC = \{ X_1, \ldots, X_n \}$, Definition~\ref{defn:InfSSComp}
  agrees with the usual definition of composition of signed subsets (see for
  instance \citet[Section 3.7]{orimatsbook}). In this case, it is customary to denote the composition
  of the signed subsets of $\VC$ by $X_1 \circ \ldots \circ X_n$.
\end{rem}

\begin{defn}\label{defn:VectorsOfMats}
  Let $M$ be a matroid that is equipped with a circuit signature. Then any composition of the
  signed circuits of $M$ is called a \textbf{vector (of $\bm{M}$)}. Dually, if $M$ is equipped
  with a cocircuit signature, then we call any composition of a set of signed cocircuits of $M$
  a \textbf{covector (of $\bm{M}$)}.
\end{defn}

\begin{defn}
  The \textbf{vectors and covectors of an orthogonally oriented matroid $\bm{(M, \CC, \CC^*)}$}
  are the vectors and covectors of $M$ according to $\CC$ and $\CC^*$.
\end{defn}

Vectors and covectors of an orthogonally oriented matroid preserve orthogonality; indeed, this
property is already inherent to the composition of signed subsets, as the following proposition
shows.

\begin{prop}\label{prop:OrthToAllElemThenOrthToComp}
  Let $\VC$ be a set of signed subsets. If $U$ is a signed subset such that $V \perp U$ for all
  $V \in \VC$, then $U$ is orthogonal to any composition of the signed subsets of $\VC$.
\end{prop}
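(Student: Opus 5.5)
Fix a well-ordering $(\VC,<)$ with order isomorphism $i\colon\alpha\to\VC$, write $V^\beta=i(\beta)$, and let $W$ be the composition of $\VC$ according to $(\VC,<)$. The plan is to show directly that $W\perp U$ by locating a sign disagreement on $\supp{W}\cap\supp{U}$. If $\supp{W}\cap\supp{U}=\emptyset$ there is nothing to prove. So assume the intersection is nonempty, and suppose for a contradiction that $W$ and $U$ agree or are opposite on it. Replacing $U$ by $-U$ if necessary (this preserves the hypothesis, since $V\perp U$ if and only if $V\perp -U$), I may assume $W(e)=U(e)$ for all $e\in\supp{W}\cap\supp{U}$.

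Next I consider the set $S=\{\beta\in\alpha \mid \supp{V^\beta}\cap\supp{U}\neq\emptyset\}$. This set is nonempty: any $e\in\supp{W}\cap\supp{U}$ lies in some $\supp{V^\beta}$ by property (i) of Definition~\ref{defn:InfSSComp}. By the well-ordering there is a least element $\gamma$ of $S$. The key observation is that, for every $e\in\supp{V^\gamma}\cap\supp{U}$, no earlier $V^\beta$ with $\beta<\gamma$ contains $e$. Indeed, such a $V^\beta$ would meet $\supp{U}$, contradicting the minimality of $\gamma$. Hence, by property (ii), $W(e)=V^\gamma(e)$ for all $e\in\supp{V^\gamma}\cap\supp{U}$.

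Since $V^\gamma\perp U$ and their supports intersect, there exist $e,f\in\supp{V^\gamma}\cap\supp{U}$ with $V^\gamma(e)U(e)=-V^\gamma(f)U(f)$. Both $e$ and $f$ lie in $\supp{W}\cap\supp{U}$, so $W(e)U(e)=-W(f)U(f)$. This contradicts the assumption that $W$ and $U$ agree on $\supp{W}\cap\supp{U}$. (In fact the contradiction framing is unnecessary: the same pair $e,f$ directly witnesses $W\perp U$.) Since the well-ordering was arbitrary, the claim holds for every composition of the signed subsets of $\VC$.

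The only delicate point is choosing the right minimal index. Taking the first $V^\beta$ that meets $\supp{U}$, rather than the first one containing a particular element, is what guarantees that $W$ restricted to $\supp{V^\gamma}\cap\supp{U}$ coincides with $V^\gamma$ there. The well-ordering makes this choice possible even when $\VC$ is infinite, and beyond that the argument is routine.
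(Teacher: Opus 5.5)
Your proof is correct and follows the paper's argument: take the least index $\gamma$ such that $\supp{V^\gamma}$ meets $\supp{U}$, note that $W$ agrees with $V^\gamma$ on $\supp{V^\gamma}\cap\supp{U}$, and carry the witnessing pair $e,f$ from $V^\gamma \perp U$ over to $W \perp U$. As you observe yourself, the contradiction framing and the replacement of $U$ by $-U$ are harmless but unnecessary.
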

\begin{proof}
  Let $(\VC, <)$ be a well-ordering on $\VC$ and let $\alpha$ be the ordinal number to which
  $(\VC, <)$ is order isomorphic. Denote the corresponding order isomorphism
  $\alpha \rightarrow \VC$ by $i$ and let $V^\beta := i(\beta)$ for all $\beta \in \alpha$. Let $W$
  be the composition of the signed subsets of $\VC$ according to $(\VC, <)$. If
  $\supp{W} \cap \supp{U} = \emptyset$, then the claim follows immediately. Otherwise,
  there is a minimal $\beta \in \alpha$ such that $\supp{V^\gamma} \cap U = \emptyset$ for
  all $\gamma < \beta$ and $\supp{V^\beta} \cap \supp{U} \ne \emptyset$. Thus there exist
  $e,f \in \supp{V^\beta}$ such that $V^\beta(e)U(e) = -V^\beta(f)U(f)$. Since $\beta$ is
  minimal, this implies $W(e) = V^\beta(e)$ and $W(f) = V^\beta(f)$, i.e.\ $W \perp U$.
\end{proof}

\begin{cor}\label{cor:OrthOfVecsAndCovecsOfOrthOriMat}
  Vectors and covectors of an orthogonally oriented matroid are orthogonal to each other.
\end{cor}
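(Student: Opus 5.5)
The plan is to apply Proposition~\ref{prop:OrthToAllElemThenOrthToComp} twice, once in each argument. Let $(M, \CC, \CC^*)$ be an orthogonally oriented matroid. Let $W$ be a vector, say the composition of a set $\VC \subseteq \CC$ of signed circuits according to some well-ordering. Let $Z$ be a covector, the composition of a set $\UC \subseteq \CC^*$ of signed cocircuits according to some well-ordering. We must show $W \perp Z$.

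First, fix an arbitrary $U \in \UC$. By condition (O) of Definition~\ref{defn:OrthOrient}, $V \perp U$ for every $V \in \VC$. Proposition~\ref{prop:OrthToAllElemThenOrthToComp} therefore gives $W \perp U$. Since $U$ was arbitrary, $W$ is orthogonal to every member of $\UC$.

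Second, orthogonality of signed subsets as in Definition~\ref{defn:OrthSS} is visibly symmetric in its two arguments: the conditions $\supp{X} \cap \supp{Y} = \emptyset$ and $X(e)Y(e) = -X(f)Y(f)$ do not distinguish $X$ from $Y$. Hence $U \perp W$ for all $U \in \UC$. Applying Proposition~\ref{prop:OrthToAllElemThenOrthToComp} again, now with the set $\UC$ and the fixed signed subset $W$, yields $W \perp Z$.

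There is no real obstacle in this argument. The only points needing care are the symmetry of $\perp$ and the fact that Proposition~\ref{prop:OrthToAllElemThenOrthToComp} holds for any well-ordering, so it applies to whichever ordering defines $Z$. One degenerate case should also be noted: if $\VC$ or $\UC$ is empty, the corresponding composition has empty support, and orthogonality holds trivially.
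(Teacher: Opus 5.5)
Your proof is correct and takes the same approach as the paper. The paper's proof is only a citation of Proposition~\ref{prop:OrthToAllElemThenOrthToComp}, and your argument spells that citation out: you apply the proposition once to get every signed cocircuit in the decomposition orthogonal to the vector, then use the symmetry of $\perp$ and apply it again to pass to the covector.
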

\begin{proof}
  Proposition~\ref{prop:OrthToAllElemThenOrthToComp}.
\end{proof}

\begin{rem}
  In the literature, vectors and covectors of finite oriented matroids are sometimes defined
  via orthogonality if the emphasis is not on composition and order. Any signed subset that
  is orthogonal to all signed cocircuits of an oriented matroid is then called a vector while
  covectors are defined vice versa. It is well known that this alternative definition and
  Definition~\ref{defn:VectorsOfMats} are equivalent for finite oriented matroids; see, for
  example, the end of Section~\ref{ssec:PPForSSS} below for a reason and a proof.
\end{rem}

We conclude this section by giving an excluded minors characterization of a certain class of
orthogonally orientable matroids: Let $k$ be a field. In \citep{exclminorinfmat}, Bowler and
Carmesin show that a tame matroid $M$ is $k$-representable or even regular if every finite
minor of $M$ is. The formulation and proof of this fact given in \citep{exclminorinfmat} serve
as a template for a similar result with respect to tame orthogonally orientable matroids and
yield the following theorem.

\begin{thm}\label{thm:OrthCharByMinors}
  Let $M$ be a tame matroid. Then the following statements are equivalent.
  \begin{enumerate*}
    \item $M$ is an orthogonally orientable matroid according to Definition~\ref{defn:OrthOrient}.
    \item Every finite minor of $M$ is an orthogonally orientable matroid (and thus the underlying
      matroid of an ordinary oriented matroid).
  \end{enumerate*}
\end{thm}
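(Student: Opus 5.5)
The direction (1)$\Rightarrow$(2) is immediate. Theorem~\ref{thm:OrthImplOrthMinors} shows that every minor of an orthogonally orientable matroid is orthogonally orientable. For finite minors, Example~\ref{exmp:FinOriMatsAreOrthOriMats} identifies this with being the underlying matroid of an ordinary oriented matroid. All the work therefore lies in (2)$\Rightarrow$(1), and I will follow the compactness template of \citep{exclminorinfmat}.

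\textbf{Step 1: encode signatures as points of a compact space.} Fix, for every circuit $\supp{C}$ and every cocircuit $\supp{U}$ of $M$, a sign function up to global negation. Formally, take the product space
\[
  \Omega = \prod_{\supp{C}} \{-1,1\}^{\supp{C}}/\pm \times \prod_{\supp{U}} \{-1,1\}^{\supp{U}}/\pm .
\]
This is compact by Tychonoff. A more convenient choice is to fix a base element in each circuit and cocircuit and record signs relative to it. Orthogonality of a pair $(C,U)$ depends only on the finitely many coordinates indexed by $\supp{C}\cap\supp{U}$, and this intersection is finite because $M$ is \emph{tame}. Hence the set $\Omega_{C,U}$ of points where $C\perp U$ is clopen. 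It suffices to show that every finite family of such constraints is simultaneously satisfiable; compactness then yields a point in $\bigcap \Omega_{C,U}$, which is an orthogonal orientation.

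\textbf{Step 2: reduce a finite family to a finite minor.} Let $C_1,\dots,C_n$ be circuits and $U_1,\dots,U_m$ cocircuits, and put $S=\bigcup_{i,j}(\supp{C_i}\cap\supp{U_j})$, which is finite by tameness. I would choose a finite set $X\supseteq S$ and a partition $\overline X = F\dot\cup G$ such that in $N=M/F\setminus G$ the following hold:
\begin{itemize*}
  \item each $\supp{C_i}\cap X$ is a circuit of $N$ (which requires $\supp{C_i}\setminus X\subseteq F$);
  \item each $\supp{U_j}\cap X$ is a cocircuit of $N$ (which requires $\supp{U_j}\setminus X\subseteq G$).
\end{itemize*}
The requirements conflict whenever $C_i$ and $U_j$ share elements outside $X$, so I first enlarge $X$ to contain $S$ together with a little more. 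Then $\bigcup_i\supp{C_i}\setminus X$ and $\bigcup_j\supp{U_j}\setminus X$ are disjoint, and I put the former into $F$, the latter into $G$, and the remaining elements into $F$ or $G$ as needed. Two further points must be checked. First, the restrictions remain circuits and cocircuits of $N$; this needs Lemma~2.3 of \citep{exclminorinfmat} together with the fact that $\supp{C_i}\cap X$ is dependent in $N$ and minimal. Second, $N$ must be finite, i.e.\ $X$ finite. This is exactly the lemma used by Bowler and Carmesin: for finitely many circuits and cocircuits of a tame matroid, one can contract and delete so that all of them survive as restrictions in a finite minor. I would import that lemma, or reprove it via Lemma~\ref{lem:ScrawlChar} and Corollary~\ref{cor:StructScrawlsOfMinor}. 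I expect this to be the main obstacle: the choice must ensure that the $C_i\cap X$ stay minimal and are not merely dependent.

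\textbf{Step 3: pull back the orientation.} By (2), $N$ carries an orthogonal orientation $(\CC_N,\CC_N^*)$. Sign each $C_i$ by choosing the signing of $\supp{C_i}\cap X$ from $\CC_N$ on $X\cap\supp{C_i}$ and arbitrary signs elsewhere; treat each $U_j$ in the same way. Because $\supp{C_i}\cap\supp{U_j}\subseteq S\subseteq X$, orthogonality of $C_i$ and $U_j$ in $M$ is exactly orthogonality of their restrictions in $N$, which holds. If the same circuit occurs twice in the list there is no conflict, since each circuit is assigned one signing. Thus every finite subfamily of the clopen sets $\Omega_{C,U}$ has nonempty intersection, and Step 1 completes the proof.
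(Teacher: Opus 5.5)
Your overall architecture (Tychonoff on sign choices, clopen constraints because $M$ is tame, finite satisfiability from a finite minor) is the compactness template the paper borrows from Bowler and Carmesin. The direction (1)$\Rightarrow$(2), Step~1, and the idea of Step~3 are fine. The gap is Step~2. There you need a finite $X\supseteq S$ and $F,G$ such that every $\supp{C_i}\cap X$ is a \emph{circuit} and every $\supp{U_j}\cap X$ a \emph{cocircuit} of $N$. Lemma~\ref{lem:ScrawlChar} and Corollary~\ref{cor:StructScrawlsOfMinor} only give a scrawl and a coscrawl, and no enlargement of $X$ can force minimality in general, so there is no such lemma to import.

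Here is a counterexample. Let $H$ be the union of two edge-disjoint double rays $o_1,o_2$, with all arcs subdivided, meeting exactly in vertices $s_n$ ($n\in\ZZ$). The ray $o_1$ passes them in the order of $n$, and $o_2$ in the order of $\sigma(n)$, where $\sigma=(0\,2)(1\,4)(3\,6)(5\,8)\cdots$ fixes all negative integers. Let $b$ be the finite bond around the $o_1$-path from $s_0$ to $s_1$, and take the family $o_1,o_2,b$ in the tame matroid $M_{AC}(H)$.
\begin{itemize}
\item Since $E=o_1\cup o_2$, you are forced to take $F=E\setminus X$.
\item Suppose an $X$-free arc of $o_2$ joined two $s_n$ that are separated along $o_1$ by $X$-edges in both directions. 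Together with a finite subpath of $o_1$ it would close a cycle meeting $X$ in a nonempty proper subset of $o_1\cap X$, so $o_1\cap X$ would not be a circuit of $N$. The same holds with $o_1,o_2$ swapped.
\item Consecutive $s_n$ on a tail of $o_2$ beyond its extreme $X$-edges are joined by $X$-free arcs. Hence they all lie outside the extreme $X$-edges of $o_1$, and symmetrically.
\item So the $s_n$ between the first and last $X$-edge of $o_1$ are exactly those between the first and last $X$-edge of $o_2$. As $X\supseteq b$, their index set $J$ is a finite interval containing $0,1$ with $\sigma(J)$ also an interval.
\end{itemize}
But $\sigma(J)\supseteq\{2,4\}$ gives $3=\sigma(6)\in\sigma(J)$, so $6\in J$. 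Then $5\in J$ gives $8\in\sigma(J)$, so $7=\sigma(10)$ and $10\in J$, and so on, which contradicts finiteness of $J$.

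The repair is to ask for less. Take $X=S$ and put $\bigcup_i\supp{C_i}\setminus S$ into $F$ and $\bigcup_j\supp{U_j}\setminus S$ into $G$. These sets are disjoint, since the two unions meet exactly in $S$; the remaining elements can go anywhere. By Corollary~\ref{cor:StructScrawlsOfMinor} and its dual, $\supp{C_i}\cap S$ is a scrawl and $\supp{U_j}\cap S$ a coscrawl of the finite orthogonally oriented matroid $N$.
\begin{itemize}
\item Composing the signed circuits of $N$ contained in $\supp{C_i}\cap S$ gives a vector $V_i$ with exactly that support.
\item Composing signed cocircuits gives a covector $W_j$ with support $\supp{U_j}\cap S$.
\item $V_i\perp W_j$ by Corollary~\ref{cor:OrthOfVecsAndCovecsOfOrthOriMat}.
\end{itemize}
Because $\supp{C_i}\cap\supp{U_j}\subseteq S$, sign $C_i$ like $V_i$ on $S$ and $U_j$ like $W_j$ on $S$, arbitrarily elsewhere. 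This makes every prescribed pair orthogonal, which is exactly the finite intersection property Step~1 needs. No enlargement of $X$ and no minimality argument are required.
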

\begin{proof}
  It is an immediate consequence of Lemma~\ref{thm:OrthImplOrthMinors} that (1) implies (2). The
  reverse implication follows from a straightforward modification of the proof of
  \cite[Theorem 4.5, step (3) $\Rightarrow$ (1)]{exclminorinfmat}.
\end{proof}

\section{(No) Plain Circuit Axioms}\label{sec:NoPlainCircAxioms}
In the previous section, we have seen that transferring the orthogonality axioms for finite oriented
matroids to the infinite setting leads to a sensible class of (infinite) oriented matroids.
It is thus natural to ask if a similar approach is possible with respect to the circuit axioms
for finite oriented matroids. In the finite case, it is possible to translate the circuit axioms
for matroids to a set of circuit axioms for oriented matroids. It is even possible to require weak
signed circuit elimination only, since strong signed circuit elimination follows from weak
elimination in this case (see for instance \cite[Theorem 3.2.5]{orimatsbook}). Obviously, this weaker
prerequisite does not suffice in the infinite case since axiom (C3) of the circuit
axioms~\ref{defn:MatCircAxioms} already states that strong circuit elimination is a requirement for
infinite matroids. But does translating the circuit axioms for infinite matroids to the oriented
setting without adding further constraints eventually lead to another sensible class of (infinite)
oriented matroids? The answer is no and in this section, we examine why this is the case and why
axiom (C3) plays a crucial role in this context. In particular, we prove the following two
claims.\\

{\em Claim~1.} Strong signed circuit elimination still yields orthogonality in the infinite
case.\\

{\em Claim~2.} In general, as a property of infinite matroids, strong signed circuit
elimination is not inherited by contraction minors nor does it carry over to the dual of
a matroid.

\subsection{Strong Signed Circuit Elimination}\label{ssec:SSCircElim}
To render the two Claims~1 and 2 more precisely, we begin by translating the (strong) circuit
elimination property (C3) from Definition~\ref{defn:MatCircAxioms} to signed circuits as follows.

\begin{defn}\label{defn:StrongSignedCircuitElim}
  Let $M$ be a matroid and $\CC$ be a circuit signature of $M$. We say that $\CC$
  has the \textbf{strong circuit elimination property} if it satisfies the following condition:
  \begin{enumerate*}
    \item[(CE)] Whenever $C \in \CC, X \subseteq \supp{C}$, and $(C_x\ |\ x \in X)$ is a family
      of elements of $\CC$ such that $\supp{C_x} \cap X = \{x\}$ and $x \in \sep(C, C_x)$ for all
      $x \in X$, then for every $f \in \supp{C} \setminus \left( \bigcup_{x \in X} \sep(C, C_x) \right)$
      there exists a $D \in \CC$ such that
      $f \in \supp{D}$,
      $D^+ \subseteq \left( C^+ \cup \bigcup_{x \in X} C_x^+ \right) \setminus X$, and
      $D^- \subseteq \left( C^- \cup \bigcup_ {x \in X} C_x^- \right) \setminus X$.
  \end{enumerate*}
\end{defn}

\subsection{Strong Signed Circuit Elimination Yields Orthogonality}
We restate and prove Claim~1 as follows.

\begin{prop}\label{prop:SignedCircElimImplOrthAxioms}
  Let $M$ be a matroid and $\CC$ be a circuit signature of $M$ that has the strong circuit elimination
  property (CE). Then there exists a unique cocircuit signature $\CC^*$ of $M^*$
  such that the pair $\CC, \CC^*$ provides an orthogonal orientation of $M$.
\end{prop}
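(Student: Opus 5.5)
Uniqueness is immediate from Proposition~\ref{prop:OrthAxiomsImplUniquenessOfSignature}, so the work lies in constructing $\CC^*$. For each cocircuit $\supp{U}$ of $M$, I would define a signing as follows. Fix $e_0 \in \supp{U}$ and set $U(e_0)=1$. For each other $f \in \supp{U}$, Lemma~\ref{lem:2ElemCircCocircInters} (applied to the circuit $\supp{U}$ of $M^*$) does not directly give what we need, since we need circuits of $M$ meeting $\supp{U}$ in exactly $\{e_0,f\}$. Such circuits exist: $\supp{U}$ is a cocircuit, so $\overline{\supp{U}}$ is a hyperplane and $\overline{\supp{U}}\cup\{e_0,f\}$ contains a circuit through $f$. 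A circuit meets a cocircuit in either zero or at least two elements, so this circuit meets $\supp{U}$ in exactly $\{e_0,f\}$. Given such a signed circuit $C$, orthogonality forces $U(f) = -C(e_0)C(f)U(e_0)$. So define $U(f) := -C(e_0)C(f)$, and take the cocircuit signature to be $\{U,-U\}$.

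\textbf{Step 1 (well-definedness).} If $C, C'$ are two signed circuits with $\supp{C}\cap\supp{U} = \supp{C'}\cap\supp{U} = \{e_0,f\}$, I must show $C(e_0)C(f) = C'(e_0)C'(f)$. Suppose not. After replacing $C'$ by $-C'$, we have $C(e_0) = -C'(e_0)$ and $C(f) = C'(f)$. Apply (CE) with $X=\{e_0\}$: $e_0 \in \sep(C,C')$, and $\supp{C'}\cap X=\{e_0\}$. Since $f \notin \sep(C,C')$, there is a $D$ with $f \in \supp{D}$ and $\supp{D} \subseteq (\supp{C}\cup\supp{C'})\setminus e_0$. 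Then $\supp{D}\cap\supp{U} = \{f\}$, which contradicts the fact that a circuit never meets a cocircuit in exactly one element (Lemma~\ref{lem:ScrawlChar}). Only the finite case $|X|=1$ of (CE) is needed here.

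\textbf{Step 2 (orthogonality for all circuits).} This is the main obstacle. Let $C$ be any signed circuit with $\supp{C}\cap\supp{U}\neq\emptyset$; the intersection may be infinite. I must find $e,f$ in the intersection with $C(e)U(e) = -C(f)U(f)$. Suppose instead that $C$ agrees with $U$ on the whole intersection, say $C(x)U(x)=1$ for all $x$ in it (the opposite case is symmetric via $-C$). Pick $f$ in the intersection, and let $X = (\supp{C}\cap\supp{U})\setminus\{f\}$, which is nonempty since $|\supp{C}\cap\supp{U}|\ge 2$.
\begin{itemize*}
  \item If $e_0 \in \supp{C}$, choose $f = e_0$.
  \item Otherwise, first apply a Step 1-type argument with a different base point. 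Consistency of the definition under changing $e_0$ follows by applying Step 1 to three points $e_0,g,h$; equivalently, I show the rule ``$U(g)U(h) = -C(g)C(h)$ whenever $\supp{C}\cap\supp{U}=\{g,h\}$'' is consistent, again using (CE) with a single eliminated element together with the fact that no circuit meets $\supp{U}$ in a singleton.
\end{itemize*}
Then, for each $x\in X$, take a signed circuit $C_x$ with $\supp{C_x}\cap\supp{U} = \{f,x\}$, oriented so that $C_x(x) = -C(x)$. By the defining rule, $C_x(f) = -C_x(x)U(x)U(f) = C(x)U(x)U(f) = U(f) = C(f)$, so $f\notin\sep(C,C_x)$. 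We also have $\supp{C_x}\cap X=\{x\}$ and $x\in\sep(C,C_x)$. Now (CE) yields a $D$ with $f\in\supp{D}$ and $\supp{D}\subseteq(\supp{C}\cup\bigcup_x\supp{C_x})\setminus X$. Hence $\supp{D}\cap\supp{U}=\{f\}$, the same contradiction as before. This is exactly where the full infinite version of (CE) is required.

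Finally, Definition~\ref{defn:OrthOrient}~(O) holds, so $\CC,\CC^*$ provides an orthogonal orientation, and uniqueness follows as noted at the start.
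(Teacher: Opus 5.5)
Your proof is correct. It uses the same mechanism as the paper's: signs are read off from circuits meeting $\supp{U}$ in two points, and (CE) is used to manufacture a circuit meeting $\supp{U}$ in a single point. The difference is in how the argument is organized.

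The paper fixes one chosen circuit $C_{U_e}$ for each $e$ and never proves well-definedness separately. A first infinite application of (CE), keeping $e_U$, gives orthogonality to every circuit through $e_U$; your Step~1 is essentially the case $|X|=1$ of this. A second infinite application, keeping an arbitrary $f$ and eliminating with the same $C_{U_e}$, yields a circuit meeting $\supp{U}$ in $\{e_U,f\}$. Its signs, read off from the conformity clause of (CE), agree with $U$, which contradicts the first part.

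You instead settle all two-element intersections first, using only single-element eliminations (Step~1 plus the base-change step). After that, a single infinite elimination based at an arbitrary $f$ suffices, and there only the support part of the conclusion of (CE) is used. This pinpoints exactly where the infinite form of (CE) is needed, at the price of an extra lemma. The paper's version is shorter because its first part absorbs both well-definedness and all two-point cases through $e_U$.

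Two smaller points:
\begin{itemize*}
\item Your base-change step is only sketched. Spelled out, it is the paper's second argument with $|X|=1$. Take $C$ with $\supp{C}\cap\supp{U}=\{g,h\}$, eliminate $g$ using a circuit meeting $\supp{U}$ in $\{e_0,g\}$, and keep $h$. The resulting circuit meets $\supp{U}$ in $\{e_0,h\}$, with signs inherited via conformity, and comparing it against Step~1 gives the contradiction. Note that this step does need the sign-conformity part of (CE), not just supports.
\item Your claim that Lemma~\ref{lem:2ElemCircCocircInters} does not directly give the needed circuits is mistaken. Applied to $M^*$ and its circuit $\supp{U}$, it yields a cocircuit of $M^*$, i.e.\ a circuit of $M$, meeting $\supp{U}$ in exactly $\{e_0,f\}$. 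Your hyperplane argument is a valid but unnecessary substitute.
\end{itemize*}
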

\begin{proof}
  Like in the finite case, we construct a cocircuit signature $\CC^*$ as follows. For each cocircuit
  $\supp{U}$ of $M$, fix an element $e_U \in \supp{U}$ and let $U(e_U) = 1$. Then,
  for each $e \in \supp{U} \setminus e_U$, pick a signed circuit $C_{U_e}$ with
  $\supp{C_{U_e}} \cap \supp{U} = \{e_U,e\}$. Let
  $U(e) = -\frac{C_{U_e}(e_U)}{C_{U_e}(e)}$. This implies $U \perp C_{U_e}$ for all
  $e \in \supp{U} \setminus e_U$.\\
  It remains to show that $U$ is orthogonal to every signed circuit. We prove this claim in two
  steps. First, assume for a contradiction that $C$ is a signed circuit containing $e_U$ such that $C$
  and $U$ are not orthogonal. Then the family of signed circuits
  $(C_{U_e}\ |\ e \in (\supp{C} \cap \supp{U}) \setminus e_U)$ has the property that
  $\supp{C_{U_e}} \cap \supp{C} \cap \supp{U} = \{e_U,e\}$. By replacing
  $C_{U_e}$ by $-C_{U_e}$ if necessary, we may assume that $C_{U_e}(e_U) = C(e_U)$ for all
  $e \in (\supp{C} \cap \supp{U}) \setminus e_U$. Since each of the signed circuits $C_{U_e}$
  is orthogonal to $U$, it follows that $e \in \sep(C, C_{U_e})$ for all
  $e \in (\supp{C} \cap \supp{U}) \setminus e_U$. It is thus possible to apply strong circuit
  elimination to $C, (C_{U_e}\ |\ e \in (\supp{C} \cap \supp{U}) \setminus e_U)$, and
  $e_U$ to obtain a signed circuit $C'$ such that $\supp{C'} \cap \supp{U} = \{e_U\}$, a
  contradiction. It follows that $C \perp U$ for all signed circuits $C$ such that
  $e_U \in \supp{C}$.\\
  To conclude the proof, assume for a contradiction that $C$ is a signed circuit that does not contain
  $e_U$ and is not orthogonal to $U$. By replacing $C$ by $-C$ if necessary, we may assume that
  $C(e) = U(e)$ for all $e \in \supp{C} \cap \supp{U}$. Pick an element
  $f \in \supp{C} \cap \supp{U}$ and consider the family of signed circuits
  $(C_{U_e}\ |\ e \in (\supp{C} \cap \supp{U}) \setminus f)$. By replacing $C_{U_e}$
  by $-C_{U_e}$, we may assume that $e \in \sep(C, C_{U_e})$ for all
  $e \in (\supp{C} \cap \supp{U}) \setminus f$. Since $C_{U_e}(e) = -C(e) = -U(e)$
  and $C_{U_e} \perp U$, it must hold that $U(e_U) = C_{U_e}(e_U)$ for all
  $e \in (\supp{C} \cap \supp{U}) \setminus f$. We apply strong circuit elimination to
  $C, (C_{U_e}\ |\ e \in (\supp{C} \cap \supp{U}) \setminus f)$, and $f$ to obtain a signed
  circuit $C'$ such that $\supp{C'} \cap \supp{U} = \{e_U,f\}, C'(e_U) = C_{U_e}(e_U)
  = U(e_U)$, and $C'(f) = C(f) = U(f)$. This is a contradiction since $e_U \in \supp{C'}$ and
  thus $C' \perp U$ must hold by the first part of the proof. It follows that $C \perp U$ for all
  signed circuits $C$.\\
  Finally, the uniqueness of the cocircuit signature follows from
  Proposition~\ref{prop:OrthAxiomsImplUniquenessOfSignature}.
\end{proof}

\begin{rem}
  Note that the proof of Proposition~\ref{prop:SignedCircElimImplOrthAxioms} is constructive: it
  explains how a suitable cocircuit signature $\CC^*$ of $M$ can be constructed when given a
  circuit signature of $M$ that has the strong circuit elimination property (CE).
\end{rem}

Before we substantiate Claim~2 and prove its negative outcome, we briefly state two positive
results that are straightforward extensions of the finite case.

\begin{prop}
  Let $M$ be a matroid on $E$, let $\CC$ be a circuit signature of $M$, and let $G \subseteq E$.
  If $\CC$ has the strong circuit elimination property (CE), then the circuit signature induced
  by $\CC$ on $M|G$ has this property as well.
\end{prop}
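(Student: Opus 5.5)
The plan is to unfold the definitions and observe that restriction introduces essentially no new structure. The first step is to identify the circuits of $M|G$ and their signing. The circuits of $M|G = M\!\setminus\!\overline{G}$ are exactly the circuits $\supp{C}$ of $M$ with $\supp{C} \subseteq G$. This holds because a subset of $G$ is independent in $M|G$ if and only if it is independent in $M$; equivalently, it is the case $F = \emptyset$ of Lemma~\ref{lem:StructCircOfMinors}, where $\supp{C'} \subseteq \supp{C} \subseteq \supp{C'} \cup \emptyset$ forces $\supp{C} = \supp{C'}$. The induced circuit signature $\CC_{M|G}$ is therefore simply $\{ C \in \CC \mid \supp{C} \subseteq G\}$, with each circuit keeping its two signings from $\CC$. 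Note that this needs no orthogonality assumption and no appeal to Lemma~\ref{lem:OrthImplUniSigInh}, since no restriction of signed sets to proper subsets occurs.

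Next I verify (CE) for $\CC_{M|G}$. Let $C \in \CC_{M|G}$, $X \subseteq \supp{C}$, and let $(C_x \mid x \in X)$ be a family in $\CC_{M|G}$ with $\supp{C_x} \cap X = \{x\}$ and $x \in \sep(C, C_x)$. Take any $f \in \supp{C} \setminus \bigcup_{x\in X}\sep(C,C_x)$. All of these signed sets also lie in $\CC$ and satisfy the same hypotheses there, because the hypotheses of (CE) refer only to the signed sets themselves. Applying (CE) for $\CC$ therefore yields some $D \in \CC$ with $f \in \supp{D}$, $D^+ \subseteq (C^+ \cup \bigcup_x C_x^+)\setminus X$, and $D^- \subseteq (C^- \cup \bigcup_x C_x^-)\setminus X$.

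It remains to check that $D \in \CC_{M|G}$. This follows because $\supp{D} = D^+ \cup D^- \subseteq \supp{C} \cup \bigcup_x \supp{C_x} \subseteq G$, so $\supp{D}$ is a circuit of $M|G$ carrying its original signing. Hence $D$ witnesses (CE) in $M|G$. I expect no real obstacle here. The only point needing care is the first step, namely that the circuits of the restriction are exactly the circuits of $M$ contained in $G$, even in the infinite setting; this follows directly from the definition of restriction via independent sets (or from Lemma~\ref{lem:StructCircOfMinors}).
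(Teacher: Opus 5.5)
Your proposal is correct and follows the paper's approach. The paper simply calls this an immediate consequence of the definition of restriction minors, and you spell that out: the circuits of $M|G$ are the circuits of $M$ contained in $G$, and the eliminated circuit $D$ has $\supp{D} \subseteq \supp{C} \cup \bigcup_{x \in X} \supp{C_x} \subseteq G$.
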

\begin{proof}
  This is an immediate consequence of the definition of restriction minors.
\end{proof}

\begin{prop}
  Let $M$ be a matroid on $E$ and $\CC$ be a circuit signature of $M$ that has the strong elimination
  property (CE). Let $\emptyset \ne F \subseteq E$ and $C, D \in \CC$ such that
  $\supp{C|_F}$ and $\supp{D|_F}$ are minimal and non-empty. If
  $\supp{C|_F} = \supp{D|_F}$, then $C|_F = D|_F$ or $C|_F = -D|_F$.
\end{prop}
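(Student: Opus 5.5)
Assume for a contradiction that $\supp{C|_F} = \supp{D|_F} =: S$ while $C|_F \ne D|_F$ and $C|_F \ne -D|_F$. The plan is to use (CE) to produce a signed circuit whose restriction to $F$ has support that is non-empty and properly contained in $S$. This contradicts the minimality of $S$.

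First I choose the orientation. Since the restrictions are neither equal nor opposite, $S$ contains elements $e,f$ with $C(e) = -D(e)$ and $C(f) = D(f)$. Replacing $D$ by $-D$ if needed (this swaps the roles of the two kinds of elements), I arrange that $e \in \sep(C,D)$ and $f \notin \sep(C,D)$ with $f \in S$.

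Next I apply (CE) with $X = \{e\}$ and the single-member family $C_e = D$. The hypotheses are checked as follows. We need $\supp{D} \cap X = \{e\}$, which is immediate. We need $e \in \sep(C, D)$, which holds by construction. The element $f$ lies in $\supp{C} \setminus \sep(C,D)$. Thus (CE) gives a signed circuit $D'$ with $f \in \supp{D'}$, $D'^+ \subseteq (C^+ \cup D^+) \setminus e$ and $D'^- \subseteq (C^- \cup D^-) \setminus e$. It follows that $\supp{D'} \subseteq (\supp{C} \cup \supp{D}) \setminus e$.

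Finally I restrict to $F$. We have $\supp{D'} \cap F \subseteq S \setminus e \subsetneq S$, and this set is non-empty because it contains $f$. This contradicts the minimality of $S$ among non-empty sets of the form $\supp{X|_F}$ with $X \in \CC$. Hence $C|_F = \pm D|_F$.

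I expect no real obstacle. The only care needed is reading ``minimal'' as minimal among non-empty traces of signed circuits on $F$, as in Proposition~\ref{prop:CharNOCMinor}, and noting that (CE) with a singleton $X$ is just ordinary strong elimination.
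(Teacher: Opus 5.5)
Your proof is correct and follows the same route as the paper's. Both take $e$ where the two restrictions disagree and $f$ where they agree, apply (CE) to $C$ and the single circuit $D$ to eliminate $e$ while keeping $f$, and note that the resulting circuit has a non-empty trace on $F$ properly contained in $\supp{C|_F}$. The sign flip of $D$ that you mention is never needed, since $C(e) = -D(e)$ already puts $e$ in $\sep(C,D)$.
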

\begin{proof}
  Assume $C|_F \ne D|_F$ and $C|_F \ne -D|_F$. Then it is possible to choose $e \in \sep(C|_F, D|_F)$
  and $f \in (C|_F^+ \cap D|_F^+) \cup (C|_F^- \cap D|_F^-)$. By applying strong circuit
  elimination to $C, D, e$, and $f$, we obtain a circuit $C'$ such that
  $\emptyset \ne \supp{C'|_F} \subset \supp{C|_F}$, a contradiction.
\end{proof}

\subsection{Strong Signed Circuit Elimination in Relation to Contraction Minors and Duals}
We now enter the core of this section and show that there is a matroid $M$ and a circuit
signature $\CC$ of $M$ such that $M$ and $\CC$ exhibit the characteristics stated in Claim~2.
A suitable choice for $M$ must be an orthogonally orientable matroid and $\CC$ must possess
the strong circuit elimination property (CE). The construction of such an $M$ and $\CC$ is
based on the following two definitions.

\begin{defn}
  We call a point on the unit sphere $S^2$ in $\RR^3$ \textbf{lexicographically positive} if its
  first non-zero component is positive and \textbf{lexicographically negative} otherwise.
\end{defn}

\begin{defn}
  Let $Q$ be a set of lines through the origin in $\RR^3$.
  \begin{enumerate*}
    \item We say that $Q$ is \textbf{free} if no three lines of $Q$ are coplanar.
    \item We say that $Q$ is \textbf{$\bm{S^2}$-dense} if for any $S^2$-neighborhood $X$ it
      contains a line that meets $X$.
    \item We say that $Q$ is \textbf{neat} if it is free and for any four lines $a,b,c,d \in Q$
      with $a$ being distinct from $b$ and $c$ being distinct from $d$ the following condition
      is satisfied: Let $e \in Q \setminus \{a,b,c,d\}$ and denote the intersection of the planes
      spanned by $ab$ and $cd$ by $l$. If $l$ is a line (through the origin), then there is a
      further line $f \in Q \setminus \{e\}$ such that the planes spanned by $ab,cd$, and $ef$
      meet in $l$. See Figures~\ref{fig:NeatSetFiveLines} and \ref{fig:NeatSetSixLines} for an
      illustration of this property.
    \item We say that $Q$ is \textbf{messy} if it is free, $S^2$-dense, and contains no six
      distinct lines $a,b,c,d,e,f$ such that the intersection of the planes spanned by $ab,cd$,
      and $ef$ is a line.
  \end{enumerate*}
\end{defn}

\begin{figure}[h]
  \centering
  \includegraphics[width=0.95\textwidth]{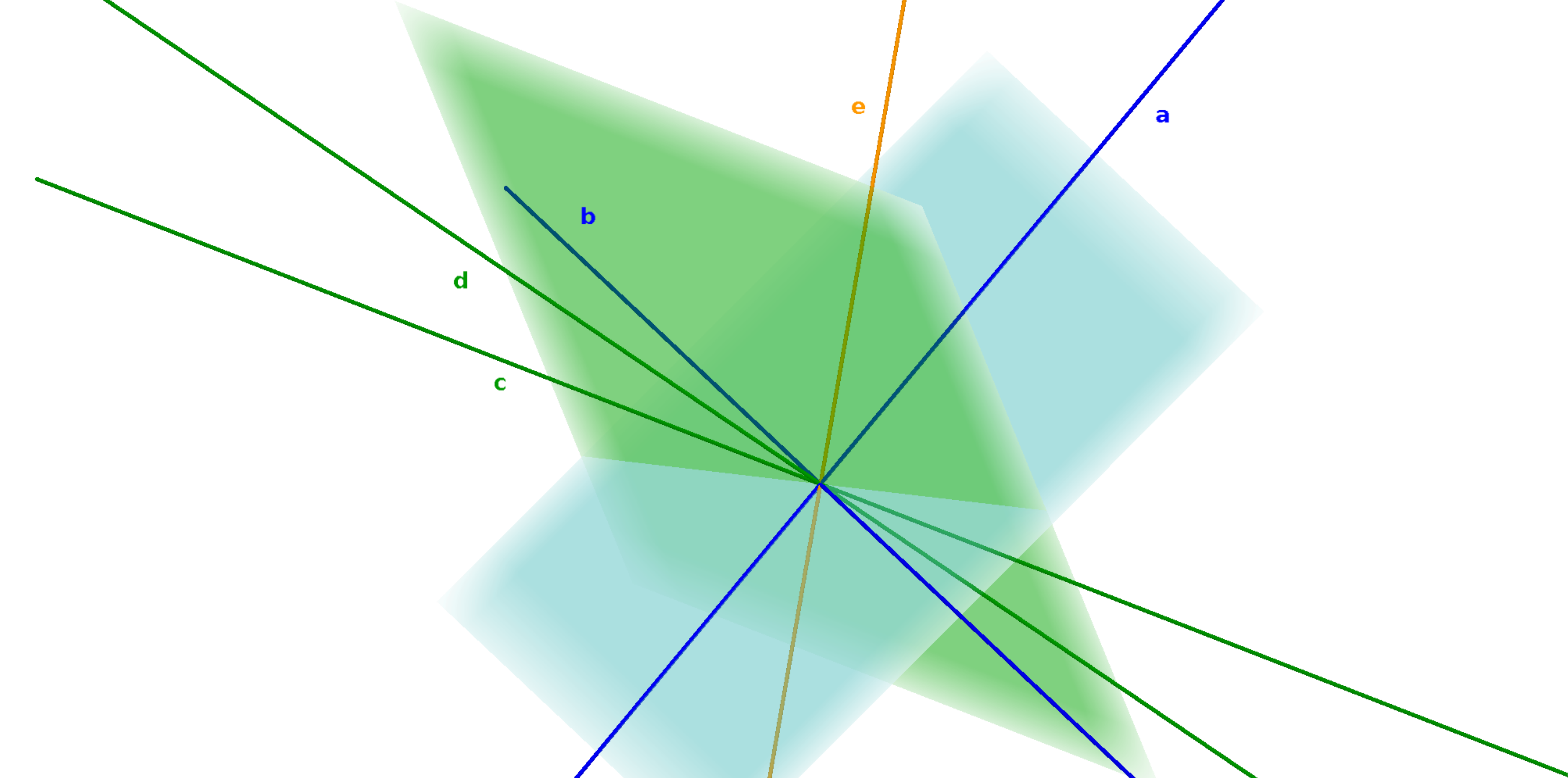}
  \caption{Five lines $a,b,c,d,e$ through the origin in $\RR^3$ such that the intersection
           of the planes spanned by $ab$ (blue) and $cd$ (green) is a line through the origin}
  \label{fig:NeatSetFiveLines}
\end{figure}

\begin{figure}[ht]
  \centering
  \includegraphics[width=0.95\textwidth]{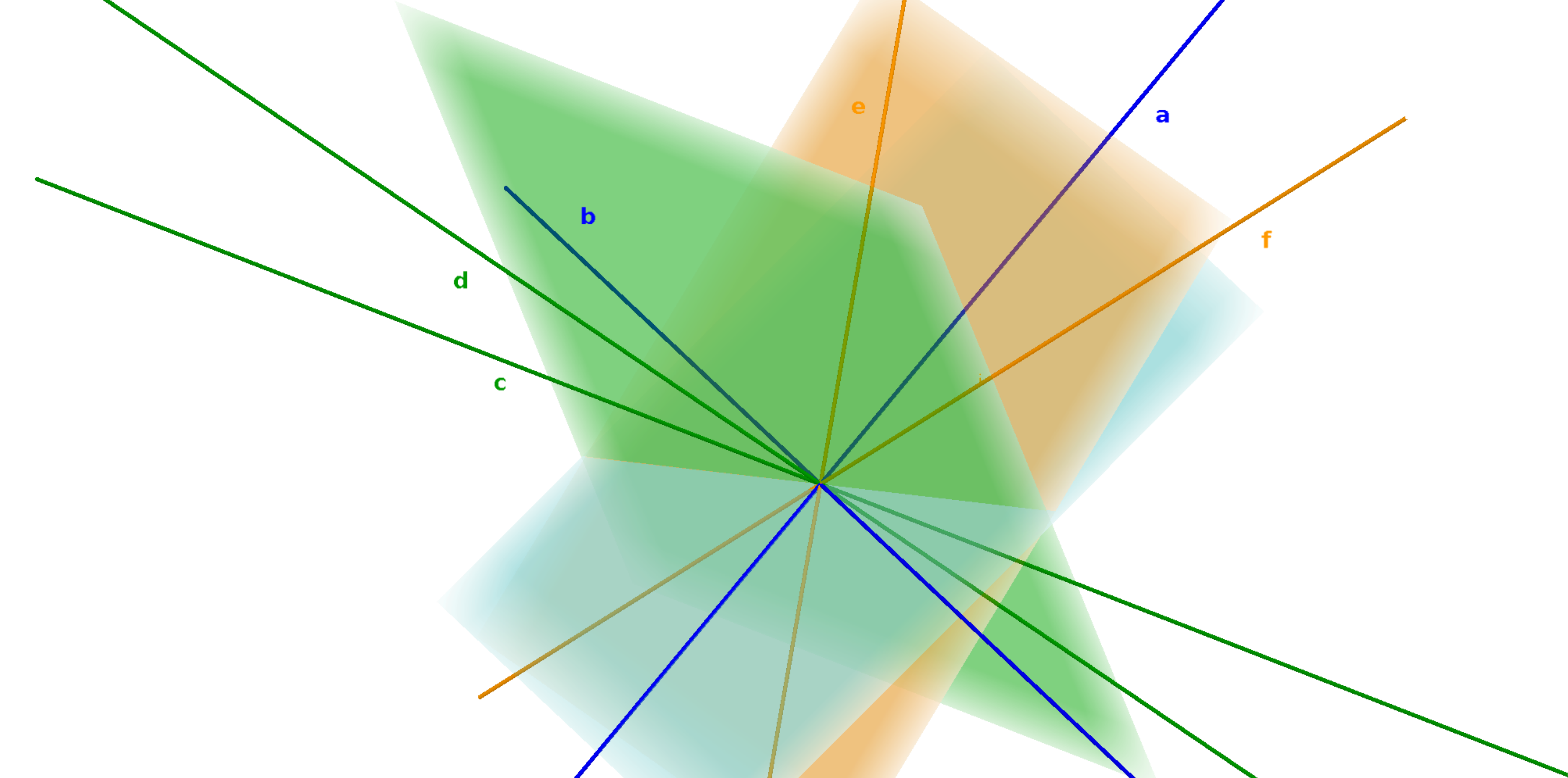}
  \caption{If the lines $a,b,c,d,e$ from Figure~\ref{fig:NeatSetFiveLines} belong to a neat
           set of lines, then there exists a further line $f$ such that the intersection of
           the planes spanned by $ab$ (blue), $cd$ (green), and $ef$ (orange) is a line again}
  \label{fig:NeatSetSixLines}
\end{figure}

Now, let $Q$ be a free set of lines through the origin in $\RR^3$ and consider the uniform matroid
$\UC_{3,Q}$. This matroid is equipped with a cocircuit signature as follows: Let
$Q \setminus \{a,b\}$ be a cocircuit of $\UC_{3,Q}$. The intersection of the complement of the plane
spanned by $ab$ with $S^2$ consists of two disjoint components of $S^2$. An opposing sign is then
assigned to each of these. Furthermore, the intersection of any line $l \in Q$ with $S^2$
yields a pair of antipodal points on $S^2$ where one of the points is lexicographically positive
whereas the other one is lexicographically negative. Since $Q$ is free, it is thus possible to orient
a cocircuit as follows. Every element of $Q \setminus \{a,b\}$ is assigned the sign of the component
of $S^2$ in which its corresponding lexicographically positive point on $S^2$ is located. Denote the
signed cocircuit obtained in this way by $Q_{a,b}$. Repeating the same construction considering only
lexicographically negative points then yields the signed cocircuit which is equal to $-Q_{a,b}$. The
collection of these signed subsets is thus a cocircuit signature of $\UC_{3,Q}$, and we denote
this cocircuit signature by $\CC^*_{\UC_{3,Q}}$ for the remainder of this section.\\

The following lemma and corollary show that $\CC^*_{\UC_{3,Q}}$ has the strong circuit elimination
property (CE) if $Q$ is neat. By the dual statement of
Proposition~\ref{prop:SignedCircElimImplOrthAxioms}, both $\UC_{3,Q}$ and its dual are thus
orthogonally orientable on $Q$.

\begin{lem}\label{lem:CircElimAndNeatSets}
  If $Q$ is neat, then $\CC^*_{\UC_{3,Q}}$ fulfills the following weak type of circuit elimination:
  Let $U_1,U_2 \in \CC^*_{\UC_{3,Q}}$, let $e \in \sep(U_1,U_2)$, and let
  $z \in \supp{U_1} \setminus \sep(U_1,U_2)$. Then there exists a $U \in \CC^*_{\UC_{3,Q}}$
  such that $z \in \supp{U}, U^+ \subseteq (U_1^+ \cup U_2^+) \setminus e$, and
  $U^- \subseteq (U_1^- \cup U_2^-) \setminus e$.
\end{lem}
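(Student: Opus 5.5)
The plan is to translate the combinatorial signing of $\CC^*_{\UC_{3,Q}}$ into linear algebra, perform the elimination on normal vectors, and then use neatness to see that the resulting plane really is spanned by two lines of $Q$. For each line $x \in Q$ let $\hat{x} \in S^2$ be its lexicographically positive point. For a cocircuit $Q \setminus \{a,b\}$, the two components of $S^2$ minus the plane spanned by $ab$ are the two open half-spaces $\{v \mid \langle n,v\rangle > 0\}$ and $\{v \mid \langle n,v\rangle<0\}$, where $n$ is a normal vector of that plane. Hence, after choosing the sign of $n$ suitably, $Q_{a,b}(x) = \operatorname{sign}\langle n, \hat{x}\rangle$ for all $x \in Q\setminus\{a,b\}$, and $\langle n,\hat{x}\rangle = 0$ exactly for $x \in \{a,b\}$, because $Q$ is free. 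Conversely, \emph{any} nonzero vector $n$ whose orthogonal plane is spanned by two lines $e,f \in Q$ yields, via $x \mapsto \operatorname{sign}\langle n,\hat{x}\rangle$, one of the two signed cocircuits $\pm Q_{e,f}$ in $\CC^*_{\UC_{3,Q}}$. I would record this first as a short observation.

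Now let $U_1 = Q_{a,b}$ and $U_2 = Q_{c,d}$, up to sign, with normals $n_1, n_2$ chosen so that $U_i(x) = \operatorname{sign}\langle n_i,\hat{x}\rangle$. The planes $ab$ and $cd$ are distinct. Otherwise $U_2 = \pm U_1$; then either $\sep(U_1,U_2) = \emptyset$, contradicting $e \in \sep(U_1,U_2)$, or $\sep(U_1,U_2) = \supp{U_1}$, so no $z$ exists. Hence their intersection $l$ is a line through the origin. Moreover $e \in \supp{U_1} \cap \supp{U_2}$, so $e \notin \{a,b,c,d\}$. Neatness then provides $f \in Q \setminus \{e\}$ such that the plane spanned by $ef$ contains $l$. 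So the plane $P$ spanned by $ef$ is the plane spanned by $l$ and $e$.

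Since $e \in \sep(U_1,U_2)$, the numbers $\langle n_1,\hat{e}\rangle$ and $\langle n_2,\hat{e}\rangle$ are nonzero and of opposite sign. So there are $\lambda,\mu > 0$ with $n := \lambda n_1 + \mu n_2$ satisfying $\langle n,\hat{e}\rangle = 0$. The vector $n$ is nonzero, since $n_1,n_2$ are independent. It is orthogonal to $l$ (as both $n_1$ and $n_2$ are) and to $e$, hence $n^\perp = P$. By the observation above, $U(x) := \operatorname{sign}\langle n,\hat{x}\rangle$ defines a signed cocircuit $U \in \CC^*_{\UC_{3,Q}}$ with $\supp{U} = Q \setminus \{e,f\}$; in particular $e \notin \supp{U}$. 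We then verify the required properties directly:
\begin{itemize*}
  \item If $U(x) = 1$, then by positivity of $\lambda,\mu$ at least one of $\langle n_1,\hat{x}\rangle$, $\langle n_2,\hat{x}\rangle$ is positive, so $x \in U_1^+ \cup U_2^+$; the negative case is symmetric.
  \item For $z \in \supp{U_1}\setminus\sep(U_1,U_2)$, the value $\langle n_1,\hat{z}\rangle$ is nonzero, and $\langle n_2,\hat{z}\rangle$ is either zero or of the same sign. Hence $\langle n,\hat{z}\rangle \ne 0$, i.e.\ $z \in \supp{U}$; this automatically gives $z \ne e,f$.
\end{itemize*}

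The main obstacle, and the step to write carefully, is the identification of the purely combinatorial signing (components of $S^2$ minus a plane, lexicographically positive points) with signs of a linear functional. In particular one must check that the sign vector of an arbitrary normal of $P$ agrees with $Q_{e,f}$ or $-Q_{e,f}$, so that the constructed $U$ genuinely lies in $\CC^*_{\UC_{3,Q}}$. Once this is settled, the argument is the standard realizable elimination, and neatness is used exactly once, to guarantee that the eliminating plane contains a second line $f$ of $Q$.
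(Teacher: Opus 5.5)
Your proof is correct. It shares the paper's skeleton: write $\supp{U_1}=Q\setminus\{a,b\}$ and $\supp{U_2}=Q\setminus\{c,d\}$, use neatness once with the line $e$ to obtain $f$, and take $U$ to be a signing of $Q\setminus\{e,f\}$. The verification, however, is genuinely different.

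The paper argues with the regions $R_{ab}^{\pm}, R_{cd}^{\pm}, R_{ef}^{\pm}$ of $S^2$. It asserts that one side of the plane $ef$ contains $R_{ab}^{+}\cap R_{cd}^{+}$ and the other contains $R_{ab}^{-}\cap R_{cd}^{-}$. It then splits into cases according to whether $U_1,U_2$ come from lexicographically positive or negative points, and treats $a,b,c,d$ distinct separately from the degenerate case $a=c=f$. Finally it obtains $z\in\supp{U}$ from the observation that $f\in\sep(U_1,U_2)$ or $f\notin\supp{U_1}$.

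You instead encode every signed cocircuit as the sign pattern of a linear functional evaluated at the lexicographically positive points, and choose $n=\lambda n_1+\mu n_2$ with $\lambda,\mu>0$. This has several advantages:
\begin{itemize}
\item The lexicographic case split is absorbed into the choice of normals.
\item The degenerate case $\{a,b\}\cap\{c,d\}\neq\emptyset$ becomes automatic, since a vanishing term simply passes on the sign of the other term.
\item The paper's separation claim about the sides of the plane $ef$, which it states without proof, actually gets proved.
\item You explicitly dispose of $\{a,b\}=\{c,d\}$, which the paper's ``the intersection is a line'' tacitly excludes.
\end{itemize}
Your argument also makes transparent that elimination always succeeds in the realization, and that neatness is needed only to guarantee that $n^\perp$ is spanned by two lines of $Q$. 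The one detail you leave implicit, that $e$ does not lie on $l$, follows at once from $\langle n_1,\hat e\rangle\neq 0$.

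What the paper's formulation buys is that it stays in the topological language in which $\CC^*_{\UC_{3,Q}}$ was defined. The subsequent lemma on messy sets uses exactly that language to show that the plane $ef$ cuts these regions, so (CE) fails there; the two lemmas thus read as mirror images.
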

\begin{proof}
  Let $\supp{U_1} = Q \setminus \{a,b\}$ and $\supp{U_2} = Q \setminus \{c,d\}$. Since $Q$ is free,
  the intersection of the planes spanned by $a,b$ and $c,d$ is a line. Thus there exists a further
  line $f \in Q \setminus e$ such that the planes spanned by $ab, cd$, and $ef$ meet in a line
  through the origin. Note that there is a signed cocircuit with support $Q \setminus \{e,f\}$,
  i.e it is sensible to talk about signed components after removing the plane spanned by $ef$ in the
  following. Let $R_{ab}^+$ resp. $R_{ab}^-$ be the component of $S^2$ which is signed positively
  resp. negatively with respect to the complement of the plane spanned by $ab$. Similarly, define
  $R_{cd}^+, R_{cd}^-$ resp. $R_{ef}^+, R_{ef}^-$ with respect to the complement of the plane
  spanned by $cd$ resp. $ef$. It then suffices to examine the following two of four possible cases.

  \begin{enumerate}
    \item The signings of both $U_1$ and $U_2$ are obtained by considering lexicographically
      positive points of $S^2$ only.
    \item The signing of $U_1$ is obtained by considering lexicographically positive points
      of $S^2$ whereas the signing of $U_2$ is obtained by considering lexicographically
      negative points of $S^2$.
  \end{enumerate}

  In case (1), one of the two components $R_{ef}^+$ and $R_{ef}^-$ includes $R_{ab}^+ \cap R_{cd}^+$
  while the other one includes $R_{ab}^- \cap R_{cd}^-$. There are two possible signings of
  the cocircuit with support $Q \setminus \{e,f\}$. Let $U$ be the one that satisfies
  $(U_1^+ \cap U_2^+) \setminus f \subseteq U^+$ and $(U_1^- \cap U_2^-) \setminus f \subseteq U^-$.
  We claim that $U^+ \subseteq (U_1^+ \cup U_2^+) \setminus e$ and
  $U^- \subseteq (U_1^- \cup U_2^-) \setminus e$ holds. Let $g \in U^+$. We only have to consider
  the case $g \in \{a,b,c,d\}$ since otherwise $g \in U_1^+ \cup U_2^+$ follows directly. Now, two
  subcases are possible, namely that $a,b,c,d$ are distinct or not. If $a,b,c,d$ are distinct, then
  the component $R_{ef}^+$ or $R_{ef}^-$ which includes $R_{ab}^+ \cap R_{cd}^+$ also includes
  the intersections of the lines $a,b,c,d$ with $S^2$ since $Q$ is free. Thus either $U(g) = U_1(g)$
  or $U(g) = U_2(g)$ holds, i.e.\ $g \in U_1^+ \cup U_2^+$. If $a,b,c,d$ are not distinct, then either
  $a \in \{c,d\}$ or $b \in \{c,d\}$. Without loss of generality we may assume $a = c$. Since $Q$ is
  free, $a = c = f$ must hold in this case and by consequence $g \ne a,c$. By the same argument as in
  the case that $a,b,c,d$ are distinct we conclude that either $U(g) = U_1(g)$ or $U(g) = U_2(g)$,
  i.e.\ $g \in U_1^+ \cup U_2^+$ again. Thus $U^+ \subseteq (U_1^+ \cup U_2^+) \setminus e$. Similarly,
  we deduce that $U^- \subseteq (U_1^- \cup U_2^-) \setminus e$ holds. Finally, since $Q$ is free either
  $f \in \sep(U_1,U_2)$ or $f \notin \supp{U_1}$, i.e.\ $z \in \supp{U}$. This concludes the proof for
  case (1).

  The proof for case (2), is similar except that one of the two components $R_{ef}^+$ and $R_{ef}^-$
  includes $R_{ab}^+ \cap R_{cd}^-$ while the other one includes $R_{ab}^- \cap R_{cd}^+$.
\end{proof}

\begin{cor}\label{cor:NeatImplCE}
  If $Q$ is neat, then $\CC^*_{\UC_{3,Q}}$ has the strong circuit elimination property (CE).
\end{cor}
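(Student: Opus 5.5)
The plan is to reduce (CE) to finitely many applications of Lemma~\ref{lem:CircElimAndNeatSets}, after first observing that any family occurring in (CE) for $\CC^*_{\UC_{3,Q}}$ is very small.

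\emph{Step 1: the family has at most three members.} Let $U \in \CC^*_{\UC_{3,Q}}$, let $X \subseteq \supp{U}$, and let $(U_x\ |\ x \in X)$ be a family of signed cocircuits with $\supp{U_x} \cap X = \{x\}$ and $x \in \sep(U, U_x)$. Each support $\supp{U_x}$ is $Q$ minus exactly two lines. Hence for any fixed $x \in X$ we have $X \setminus x \subseteq Q \setminus \supp{U_x}$, so $|X| \le 3$. The condition in (CE) therefore involves only a finite family. I will argue by induction on $|X|$, with a fixed $f \in \supp{U} \setminus \bigcup_{x \in X}\sep(U,U_x)$.

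\emph{Step 2: base cases.} If $X = \emptyset$, take $D = U$. If $X = \{x\}$, apply Lemma~\ref{lem:CircElimAndNeatSets} with $U_1 = U$, $U_2 = U_x$, $e = x$ and $z = f$. This directly gives a signed cocircuit $D$ with $f \in \supp{D}$ and the required sign containments.

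\emph{Step 3: inductive step.} Suppose $X = \{x_1, \dots, x_k\}$ with $k \in \{2,3\}$. First eliminate $x_1$ between $U$ and $U_{x_1}$ using Lemma~\ref{lem:CircElimAndNeatSets} with $z = f$. This yields $U'$ with $f \in \supp{U'}$, $U'^{\pm} \subseteq (U^{\pm} \cup U_{x_1}^{\pm}) \setminus x_1$. Now check that $U'$ together with the remaining family $(U_{x_i}\ |\ i \ge 2)$ again satisfies the hypotheses of (CE), with $X' = X \cap \supp{U'} \setminus x_1$.
\begin{itemize*}
  \item[(a)] Take $x_i \in \supp{U'}$ with $i \ge 2$. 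Since $x_i \notin \supp{U_{x_1}}$, its sign in $U'$ must come from $U$, so $U'(x_i) = U(x_i)$. Hence $x_i \in \sep(U', U_{x_i})$. Elements $x_i \notin \supp{U'}$ are simply dropped from the family.
  \item[(b)] The condition $\supp{U_{x_i}} \cap X' = \{x_i\}$ is inherited from the original family.
  \item[(c)] $f \notin \sep(U', U_{x_i})$. Since $f \notin \sep(U, U_{x_1})$, whenever $f \in \supp{U_{x_1}}$ the signs $U(f)$ and $U_{x_1}(f)$ agree. So in every case $U'(f) = U(f)$, and $f \notin \sep(U, U_{x_i})$ transfers to $U'$.
\end{itemize*}
By induction we obtain $D$ with $f \in \supp{D}$ and
\[
D^{\pm} \subseteq \Bigl(U'^{\pm} \cup \bigcup_{i\ge 2} U_{x_i}^{\pm}\Bigr) \setminus X' \subseteq \Bigl(U^{\pm} \cup \bigcup_{x\in X} U_x^{\pm}\Bigr).
\]
It remains to check that $D$ avoids all of $X$. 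It avoids $x_1$ because $x_1 \notin \supp{U'}$ and $x_1 \notin \supp{U_{x_i}}$ for $i \ge 2$. It avoids each dropped $x_i$ because $x_i$ lies neither in $\supp{U'}$ nor in any $\supp{U_{x_j}}$ with $j \ne i$.

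\emph{Main obstacle.} The delicate point is the bookkeeping in Step 3, especially (c) and the fact that eliminated or dropped elements never reappear. These rely on the disjointness condition $\supp{U_x} \cap X = \{x\}$ and on the sign containments from Lemma~\ref{lem:CircElimAndNeatSets}. The conceptual heart of the argument, however, is Step 1: the cofiniteness of cocircuits of $\UC_{3,Q}$ forces $|X| \le 3$, which is exactly what allows weak elimination to be bootstrapped to (CE) in this infinite matroid.
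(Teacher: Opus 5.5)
Your proposal is correct and follows the paper's proof. Both bound $|X|\le 3$ using that every cocircuit of $\UC_{3,Q}$ misses exactly two lines, and both then iterate Lemma~\ref{lem:CircElimAndNeatSets} with the same bookkeeping: $U'(f)=U(f)$ because $f\notin\sep(U,U_{x_1})$, and $U'(x_2)=U(x_2)$ because $x_2\notin\supp{U_{x_1}}$.

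The only difference is cosmetic. The paper treats $|X|=3$ separately, observing that iterating would produce a cocircuit missing three lines, so no admissible $f$ can exist there; your uniform induction covers that case vacuously. One small slip: in your final display the union should run over $x\in X'$ rather than over all $i\ge 2$, which is what your verbal argument already uses.
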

\begin{proof}
  Let $U \in \CC^*_{\UC_{3,Q}}, X \subseteq \supp{U}, (U_x\ |\ x \in X)$, and
  $f \in \supp{U} \setminus \left( \bigcup_{x \in X} \sep(U, U_x) \right)$ be as stated in the
  definition of (CE). Then for any $x \in X$, we have $\supp{U_x} \cap X = \{x\}$, so that
  $X \subseteq (Q \setminus \supp{U_x}) \cup x$, i.e.\ $X$ has at most three elements. Thus we
  have to consider the following three cases only.

  \begin{enumerate}
    \item $X = \{ x \}$: One application of Lemma~\ref{lem:CircElimAndNeatSets} yields the
      desired result.
    \item $X = \{ x_1, x_2 \}$: Applying Lemma~\ref{lem:CircElimAndNeatSets} to $U$ and
      $U_{x_1}$ yields a cocircuit $V$ such that
      $V^+ \subseteq \left( U^+ \cup U^+_{x_1} \right) \setminus x_1$,
      $V^- \subseteq \left( U^- \cup U^-_{x_1} \right) \setminus x_1$, and $f \in \supp{V}$.
      If $x_2 \notin \supp{V}$, then $V$ already has all of the desired properties. Otherwise
      note that since $V(f) = U(f)$ and $f \notin \sep(U,U_{x_2})$ also
      $f \notin \sep(V,U_{x_2})$. On the other hand, since $x_2 \notin \supp{U_{x_1}}$ we
      must have $V(x_2) = U(x_2)$, i.e.\ $x_2 \in \sep(V,U_{x_2})$. It is thus possible to
      apply Lemma~\ref{lem:CircElimAndNeatSets} to $V$ and $U_{x_2}$ to obtain the desired
      cocircuit.
    \item $X = \{ x_1, x_2, x_3 \}$: If $X$ contains exactly three elements, it is not possible
      to choose an $f$ as required in the definition of (CE),
      i.e.\ $\supp{U} \setminus \left(\bigcup_{x \in X} \sep(U,U_x)\right)$ must be the empty set.
      Otherwise the reasoning from case (2) would imply that it is possible to apply
      Lemma~\ref{lem:CircElimAndNeatSets} three times to obtain a cocircuit whose support
      excludes three elements of $Q$, a contradiction.
  \end{enumerate}
\end{proof}

On the contrary, if $P$ is a messy set of lines through the origin in $\RR^3$, then
$\CC^*_{\UC_{3,P}}$ does not possess the strong circuit elimination property (CE).

\begin{lem}
  If $P$ is a messy set of lines through the origin in $\RR^3$, then strong circuit elimination (CE)
  fails for $\CC^*_{\UC_{3,P}}$ in the following strong sense: for any signed cocircuits
  $U_1$ and $U_2$ in $\CC^*_{\UC_{3,P}}$ with $\supp{U_1} \cup \supp{U_2} = P$ and any
  $e \in \sep(U_1,U_2)$, there is no $U \in \CC^*_{\UC_{3,P}}$ with
  $U^+ \subseteq (U_1^+ \cup U_2^+) \setminus e$ and $U^- \subseteq (U_1^- \cup U_2^-) \setminus e$.
\end{lem}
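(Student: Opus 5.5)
Throughout, write $\supp{U_1} = P \setminus \{a,b\}$ and $\supp{U_2} = P \setminus \{c,d\}$. The hypothesis $\supp{U_1} \cup \supp{U_2} = P$ forces $\{a,b\} \cap \{c,d\} = \emptyset$, so $a,b,c,d$ are four distinct lines. Also $e \in \sep(U_1,U_2)$ lies in both supports, so $e \notin \{a,b,c,d\}$. Suppose, for a contradiction, that some signed cocircuit $U$ satisfies the two sign inclusions. Then $e \notin \supp{U}$, so $\supp{U} = P \setminus \{e,f\}$ for some line $f \neq e$, and $U = \pm P_{e,f}$. I will show that the plane spanned by $ef$ contains the line $l$ in which the planes spanned by $ab$ and $cd$ meet; $l$ is a line because $P$ is free. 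This contradicts messiness when $f \notin \{a,b,c,d\}$. The remaining cases $f \in \{a,b,c,d\}$ contradict freeness.

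The first step is to extract a sign constraint from the inclusions. For every $g \in P \setminus \{a,b,c,d,e,f\}$, the element $g$ lies in both supports. The inclusion $U^\pm \subseteq U_1^\pm \cup U_2^\pm$ then says: if $U_1(g) = U_2(g)$, then $U(g) = U_1(g)$. Geometrically, the two great circles (of the planes $ab$ and $cd$) cut $S^2$ into four open lunes $R_{ab}^{\pm} \cap R_{cd}^{\pm}$, all with vertices $\pm l$. Let $A = (R_{ab}^+ \cap R_{cd}^+) \cup (R_{ab}^- \cap R_{cd}^-)$ be the antipodal pair of lunes on which $U_1$ and $U_2$ agree. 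A line $g$ whose lexicographically positive point lies in $A$ has $U_1(g) = U_2(g)$, and this common sign is $+$ on one lune and $-$ on the other. Hence $U$ must give sign $+$ to such lines in the first lune and $-$ in the second.

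The second step uses $S^2$-density. Suppose the great circle of the plane $ef$ passes through an interior point of one of the lunes of $A$, say $R_{ab}^+ \cap R_{cd}^+$. Then I can choose two small open neighbourhoods inside that lune, on opposite sides of the great circle of $ef$. I shrink them so that each lies in a single lexicographic hemisphere; this is possible because the lexicographic boundary has empty interior. Density yields lines of $P$ meeting these neighbourhoods, and there are infinitely many, so I can avoid the six special lines. Taking lexicographically positive representatives (possibly after passing to antipodes, which sends the lune $R_{ab}^+ \cap R_{cd}^+$ to $R_{ab}^- \cap R_{cd}^-$ and flips both the required sign and the side of the plane $ef$), the two lines require the same sign from $U$ relative to the lune, yet lie in opposite components of $S^2$ minus the plane $ef$. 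This contradicts $U = \pm P_{e,f}$. So the great circle of $ef$ misses the interiors of both lunes of $A$. It therefore lies in the closure of the complementary pair of lunes. A great circle contained in a closed antipodal pair of lunes must pass through their common vertices $\pm l$, so $l$ lies in the plane spanned by $ef$.

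The final step is the case distinction. If $f \notin \{a,b,c,d\}$, then $a,b,c,d,e,f$ are six distinct lines whose three planes $ab$, $cd$, $ef$ meet in $l$, contradicting messiness. If instead $f = a$ (the cases $b$, $c$, $d$ are symmetric), then the planes $ab$ and $ea$ both contain $a$ and $l$. Here $l \neq a$, since otherwise $a$ would lie in the plane $cd$, contradicting freeness. So the two planes coincide, and $e,a,b$ are coplanar, again contradicting freeness. I expect the main obstacle to be the density step: one must handle the lexicographic-positivity convention carefully so that the chosen lines have their relevant points in the intended lune and on the intended sides. The convexity fact about great circles in a double lune is elementary.
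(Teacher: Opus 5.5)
Your proof is correct and is essentially the paper's argument. In both, messiness (together with freeness) prevents the plane spanned by $ef$ from containing the line $l$ in which the planes $ab$ and $cd$ meet, so its great circle cuts the antipodal pair of lunes on which $U_1$ and $U_2$ agree, and $S^2$-density then yields a line whose $U$-sign violates the required inclusions. You run this in contrapositive form and make explicit what the paper leaves implicit: the lexicographic sign convention (which lets you merge the paper's two cases), the fact that a great circle avoiding an open double lune must pass through $\pm l$, and the case $f \in \{a,b,c,d\}$, which messiness alone does not cover and which you correctly settle via freeness.
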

\begin{proof}
  Suppose for a contradiction that there are such $U_1, U_2, e$, and $U$. Let
  $\supp{U_1} = P \setminus \{a,b\}, \supp{U_2} = P \setminus \{c,d\}$, and
  $\supp{U} = P \setminus \{e,f\}$. Define $R_{ab}^+, R_{ab}^-, R_{cd}^+, R_{cd}^-, R_{ef}^+$,
  and $R_{ef}^-$ as in the proof of Lemma~\ref{lem:CircElimAndNeatSets}. Since no three of the
  lines $a,b,c,d$ are coplanar and $P$ is messy, the intersection of the planes spanned by
  $ab, cd$, and $ef$ is not a line through the origin. Considering the two cases from
  Lemma~\ref{lem:CircElimAndNeatSets} again, this means that the plane spanned by $ef$ cuts
  the regions $R_{ab}^+ \cap R_{cd}^+$ and $R_{ab}^- \cap R_{cd}^-$ in case (1) and the regions
  $R_{ab}^+ \cap R_{cd}^-$ and $R_{ab}^- \cap R_{cd}^+$ in case (2), respectively. In either
  case, since $P$ is $S^2$-dense, it follows that
  $U^+ \cap (U_1^- \cup U_2^-) \setminus \sep(U_1,U_2) \ne \emptyset$ or
  $U^- \cap (U_1^+ \cup U_2^+) \setminus \sep(U_1,U_2) \ne \emptyset$, a contradiction.
\end{proof}

The next step is to show that there actually exists a suitable neat set of lines through the
origin that has a messy subset.

\begin{lem}\label{lem:NeatQMessyP}
  There is a neat set $Q$ of lines through the origin in $\RR^3$, and a further subset $P$
  with $P \subseteq Q$ which is messy.
\end{lem}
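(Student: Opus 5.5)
We construct $P$ first and then close it up to a neat $Q$ by adding countably many further lines in stages. We also arrange that no \emph{new} coincidences are created, so that $P$ stays messy.

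For $P$, take a countable set of lines whose directions are algebraically independent in a suitable sense. Enumerate a countable base of $S^2$-open sets $X_1, X_2, \ldots$ and choose lines $p_1, p_2, \ldots$ inductively with $p_n$ meeting $X_n$. Each $p_n$ must avoid the countably many closed, nowhere dense conditions coming from the earlier lines: lying in a plane spanned by two earlier lines, and creating a triple of planes $ab, cd, ef$ with a common line. Consider a new line $f$ with $a,\dots,e$ old. The planes $ab$ and $cd$ meet in a line $l$ (generic, by freeness). The requirement that the plane $ef$ contain $l$ forces $f$ into the plane spanned by $e$ and $l$, which is a proper closed set. Since $p_n$ avoids finitely many such conditions at stage $n$, a Baire or measure argument gives a suitable $p_n$. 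The resulting $P$ is free, $S^2$-dense, and contains no six lines whose three planes share a line. Hence $P$ is messy.

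To obtain $Q$, iterate $\omega$ times: $Q_0 = P$, and $Q_{k+1}$ is obtained from $Q_k$ as follows. For every quintuple $a,b,c,d,e \in Q_k$ with $a\neq b$, $c \neq d$, where the planes $ab$ and $cd$ meet in a line $l$ and $e \notin \{a,b,c,d\}$, such that no witness $f$ exists yet, add a new line $f$ in the plane spanned by $e$ and $l$. Set $Q = \bigcup_k Q_k$. Every quintuple appears at some finite stage and receives its witness at the next, so $Q$ satisfies the neatness condition. Freeness has to be preserved. Each new $f$ has a one-parameter family of choices (a circle's worth of lines in the plane spanned by $e$ and $l$). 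We pick it, one at a time in a countable enumeration, to avoid the countably many planes spanned by pairs of lines chosen so far, which is possible because each such plane meets the given plane in a single line (unless they coincide, which freeness of $e$ and $l$ rules out). We also need $f \neq e$ and $f \neq l$; in fact $l$ itself could be in $Q$, and that case is fine.

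The main obstacle is ensuring that $P$ remains messy inside $Q$. Messiness only concerns six lines of $P$, and adding lines to $Q$ does not change $P$. So $P \subseteq Q$ is messy by construction, and the genuine difficulty reduces to freeness of $Q$ during the closure. The remaining issue is degenerate cases: $e$ lying on $l$, or $l$ being one of $a,b,c,d$, which happens when the pairs share a line. Here we choose $f$ so that it avoids planes through existing pairs. If $l$ coincides with a line of $Q$ and $e$ differs from it, the plane spanned by $e$ and $l$ already contains two lines of $Q$. Freeness then forbids any new $f$ in it, so we instead take $f = l$, provided $l \neq e$ and $l \in Q$, which satisfies the neatness condition directly. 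These case distinctions are what I expect to take the most care.
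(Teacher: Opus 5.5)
Your proposal is correct and is essentially the paper's proof. Both arguments build a messy $P$ by choosing, in each member of a countable base of $S^2$, a line that avoids the finitely many forbidden planes determined by earlier lines. Both then close $P$ up to a neat $Q$ by adding witnesses one at a time, each placed in the plane spanned by $e$ and $l$ and avoiding every plane spanned by two earlier lines. The paper does the bookkeeping with a single $\ZZ$-indexed enumeration of index $5$-tuples instead of your $\omega$ stages.

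One justification needs fixing: why the plane spanned by $e$ and $l$ is not itself spanned by two existing lines. The reason is not ``freeness of $e$ and $l$''. It is that any existing line $g \neq e$ in that plane would already be a witness. So your ``no witness yet'' test must be made against all lines chosen so far, including those added earlier in the same stage, as the paper's ``no $j<i$'' condition does. If you test only against $Q_k$, relabellings such as $(a,b,c,d,e)$ and $(c,d,a,b,e)$ would each add a new line to the same plane and destroy freeness.
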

\begin{proof}
  Let $(O_i\ |\ i \in \NN_0)$ be a countable basis of the topology on $S^2$. Consider the set
  of 5-element lists
  \[
    \left\{ (p_1,p_2,p_3,p_4,p_5) \in \ZZ^5\ \middle|\ p_1 \ne p_2, p_3 \ne p_4,
                                                       \left| \{ p_1, p_2 \} \cap \{ p_3, p_4 \} \right| \leq 1,
                                                       p_5 \notin \{ p_1, p_2, p_3, p_4 \} \right\}
  \]
  and let $(p^n\ |\ n \in \NN_0)$ be an enumeration of this set chosen in such a way that for
  any $n \in \NN_0$ all $p^n_t$ are less than $n$.

  First, we build a sequence $(x_i\ |\ i \in \NN_0)$ recursively with $x_i \in O_i$. For all
  $n \in \NN_0$ let $\tilde{l}_i$ be the line through $x_i$ and the origin in $\RR^3$. To ensure
  that the set $P = \{ \tilde{l}_i\ |\ i \in \NN_0 \}$ is messy, we choose $x_i$ so that it is
  distinct from all previous $x_j$, does not lie on the great circle through $x_j, x_k$ for
  any $x_j, x_k$ with $j,k$ distinct, and does not lie on any of the finitely many great circles
  joining some $x_j$ to the intersections of some great circle through $x_k, x_l$ with some
  great circle through $x_m, x_n$, all of $j,k,l,m$, and $n$ being distinct and less than $i$.
  Since we only have to avoid finitely many points and great circles, this is always possible.\\
  Now we define a line $l_i$ through the origin in $\RR^3$ for each $i \in \ZZ$ as follows.
  First, for $i < 0$, we take $l_i = \tilde{l}_{-1-i}$. Then we define the $l_i$ with $i \in \NN_0$
  recursively. If there is no integer $j < i$ for which the planes spanned by
  $l_{p_1^i} l_{p_2^i}, l_{p_3^i} l_{p_4^i}$, and $l_{p_5^i} l_j$ meet in a line through the
  origin, then we take $l_i$ to lie in the plane $h_i$ spanned by $l_{p_5^i}$ and the intersection
  of the planes spanned by $l_{p_1^i} l_{p_2^i}$ and $l_{p_3^i} l_{p_4^i}$. Otherwise, we pick any
  plane $h_i$ through the origin including at most one $l_j$ with $j < i$ and take $l_i$ to lie
  in this plane. To ensure that the set $Q = \{ l_i\ |\ i \in \ZZ \}$ is free, we choose $l_i$ in
  such a way that it lies in none of the planes spanned by any $l_j l_k$ with $j$ and $k$ integers
  less than $i$. This is possible since there are only countably many such planes, and each of them
  meets the plane $h_i$ in at most one line.
\end{proof}

We are almost ready to present the desired example. The final ingredient is the following
lemma which is used to validate the second statement of Claim~2.

\begin{lem}\label{lem:FOrthOMImplSignedCircElim}
  The circuit signature of a finitary orthogonally oriented matroid has the strong circuit
  elimination property (CE).
\end{lem}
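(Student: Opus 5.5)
The plan is to reduce (CE) for a finitary orthogonally oriented matroid $(M,\CC,\CC^*)$ to a statement about finite minors, where strong signed circuit elimination is known. Take $C$, $X\subseteq \supp{C}$, the family $(C_x\mid x\in X)$ and $f\in\supp{C}\setminus\bigcup_{x}\sep(C,C_x)$ as in Definition~\ref{defn:StrongSignedCircuitElim}. Since $M$ is finitary, $\supp{C}$ is finite, so $X$ is finite and only finitely many circuits $C_x$ are involved. Put $S=\supp{C}\cup\bigcup_{x\in X}\supp{C_x}$, a finite set, and consider the restriction $M|S$. By Theorem~\ref{thm:OrthImplOrthMinors} it is an orthogonally oriented matroid on the finite set $S$, hence a finite oriented matroid (Example~\ref{exmp:FinOriMatsAreOrthOriMats}). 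By the Corollary on restriction minors, its signed circuits are exactly the signed circuits of $\CC$ with support in $S$; in particular $C$ and all $C_x$ are signed circuits of $M|S$.

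Next I apply strong signed circuit elimination in the finite oriented matroid $M|S$ \citep[Theorem 3.2.5]{orimatsbook}. That theorem eliminates only one element at a time, so I proceed by induction on $|X|$, as in case (2) of Corollary~\ref{cor:NeatImplCE}. Eliminate $x_1$ between $C$ and $C_{x_1}$, keeping $f$, to get $D_1$. It satisfies $f\in\supp{D_1}$ and $D_1^\pm\subseteq(C^\pm\cup C_{x_1}^\pm)\setminus x_1$.

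For the next element $x_2\in X$, I need two facts. First, $x_2\notin\supp{C_{x_1}}$, so if $x_2\in\supp{D_1}$ then $D_1(x_2)=C(x_2)$, and hence $x_2\in\sep(D_1,C_{x_2})$. Second, $f$ is not separated between $D_1$ and $C_{x_2}$. To see this, note that $D_1(f)$ equals $C(f)$ or $C_{x_1}(f)$. Both are compatible with $C(f)$, because $f\notin\sep(C,C_{x_1})$ and $f\notin\sep(C,C_{x_2})$. I expect this sign bookkeeping, namely ensuring $D_i(f)=C(f)$ whenever $f\in\supp{C}$, to be the main obstacle. The finite strong elimination theorem in fact gives $D(f)$ conforming to the original sign, since $f$ lies in the support of the first circuit and outside the separator. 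If $x_2\notin\supp{D_1}$, skip this step. Otherwise eliminate $x_2$ between $D_1$ and $C_{x_2}$ with $f$ preserved.

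After $|X|$ steps I obtain $D\in\CC$ with $f\in\supp{D}$ and $D^\pm\subseteq(C^\pm\cup\bigcup_x C_x^\pm)\setminus X$. No eliminated $x_i$ is reintroduced later, because later circuits $C_{x_j}$ avoid $x_i$ ($\supp{C_{x_j}}\cap X=\{x_j\}$). Finally, $D$ is a signed circuit of $M|S$ and therefore a signed circuit of $M$ in $\CC$, which gives (CE).
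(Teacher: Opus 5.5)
Your proof is correct and takes essentially the paper's route. Since $M$ is finitary, $X$ is finite, so you pass to a finite restriction, where $\CC$ induces the circuit signature of a finite oriented matroid and finite strong signed elimination applies, and then you eliminate the elements of $X$ one at a time in a chain $C=D_0,D_1,\ldots,D_n$. The differences are minor. You restrict once to $S=\supp{C}\cup\bigcup_{x\in X}\supp{C_x}$, whereas the paper restricts to $\supp{C}\cup\supp{C'}$ for each pairwise step. You also make explicit the bookkeeping the paper leaves implicit: $D_j(f)=C(f)$, $x_j\in\sep(D_{j-1},C_{x_j})$ whenever $x_j\in\supp{D_{j-1}}$, and eliminated elements are never reintroduced.
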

\begin{proof}
  Consider a finitary orthogonally oriented matroid with underlying matroid $M$ and circuit
  signature $\CC$. First, let $C, C' \in \CC$ be such that $C \ne -C'$ and
  $\sep(C, C') \ne \emptyset$ and let $f \in \supp{C} \setminus \sep(C, C')$ and
  $e \in \sep(C, C')$. By Lemma~\ref{thm:OrthImplOrthMinors} the finite restriction minor
  $N = M|(\supp{C} \cup \supp{C'})$ of $M$ is the underlying matroid of a finite oriented
  matroid whose circuit signature $\CC_N$ is induced by $\CC$. Thus there exists a
  $D \in \CC_N \subseteq \CC$ such that $f \in \supp{D},
  D^+ \subseteq (C^+ \cup C'^+) \setminus e$, and $D^- \subseteq (C^- \cup C'^-) \setminus e$. Now,
  let $C \in \CC, X \subseteq \supp{C}, (C_x\ |\ x \in X)$ be a family of elements of $\CC$
  such that $\supp{C_x} \cap X = \{x\}$ and $x \in \sep(C, C_x)$ for all $x \in X$, and let
  $f \in \supp{C} \setminus \left( \bigcup_{x \in X} \sep(C, C_x) \right)$. Note that
  $(C_x\ |\ x \in X)$ is a finite family since $\supp{C}$ is finite. Let
  $X = \{x_1, \ldots, x_n\}$ and set $C_0 = C$. The two conditions $x \in \sep(C, C_x)$ and
  $\supp{C_x} \cap X = \{x\}$ for all $x \in X$ and the first part of the proof imply that
  there exists a chain of
  circuits $C_0, C_1, \ldots, C_n \in \CC$ such that $f \in \supp{C_i}$ for
  $0 \leq i \leq n$ and additionally
  $C_j^+ \subseteq (C_{j-1}^+ \cup C_{x_j}^+) \setminus \{x_1, \ldots, x_j\}$ and
  $C_j^- \subseteq (C_{j-1}^- \cup C_{x_j}^-) \setminus \{x_1, \ldots, x_j\}$ for
  $1 \leq j \leq n$. The circuit $C_n$ then satisfies $f \in \supp{C_n},
  C_n^+ \subseteq \left( C^+ \cup \bigcup_{x \in X} C_x^+ \right) \setminus X$, and
  $C_n^- \subseteq \left( C^- \cup \bigcup_ {x \in X} C_x^- \right) \setminus X$.
\end{proof}

\begin{exmp}\label{exmp:OrthOriMAtWithSSCEAndNonSSCECMinor}
  Choose $Q$ and $P$ according to Lemma~\ref{lem:NeatQMessyP}. Then $\UC_{3,Q}$ is the underlying
  matroid of an orthogonally oriented matroid on $Q$ by Corollary~\ref{cor:NeatImplCE}. The
  cocircuit signature $\CC^*_{\UC_{3,Q}}$ of this orthogonally oriented matroid has the strong circuit
  elimination property (CE). However, the cocircuit signature $\CC^*_{\UC_{3,P}}$ induced by
  $\CC^*_{\UC_{3,Q}}$ on the contraction minor $\UC_{3,P}$ of $\UC_{3,Q}$ no longer has this property.
  Furthermore, by Lemma~\ref{lem:FOrthOMImplSignedCircElim} the orthogonally oriented matroid
  $(\UC_{3,P}, \CC_{\UC_{3,P}}, \CC^*_{\UC_{3,P}})$ provides an example of a finitary orthogonally
  orientable matroid that is equipped with a circuit and cocircuit signature such that the circuit
  signature has the strong circuit elimination property (CE) whereas the cocircuit signature does not
  have this property.
\end{exmp}

Example~\ref{exmp:OrthOriMAtWithSSCEAndNonSSCECMinor} shows that both statements of Claim~2 are true.
Just translating the circuit axioms \ref{defn:MatCircAxioms} to the oriented setting as in the
finite case and replacing axiom (C3) by (CE) thus does not lead to another sensible class of
infinite oriented matroids since contraction minors in general do not satisfy (CE) again. Note
that this does not mean that it is impossible to state some kind of circuit axioms for infinite
oriented matroids; it just means that a plain translation of the circuit
axioms \ref{defn:MatCircAxioms} does not succeed.

\begin{rem}
  Note that Example~\ref{exmp:OrthOriMAtWithSSCEAndNonSSCECMinor} is a counterexample to
  \citep[Conjecture 3.13]{orthaxiomsinforimats}.
\end{rem}

Regardless of the negative implications of Example~\ref{exmp:OrthOriMAtWithSSCEAndNonSSCECMinor},
Proposition~\ref{prop:SignedCircElimImplOrthAxioms} shows that even in the infinite case, it is still
possible to show that a matroid is orthogonally orientable by choosing a circuit (or cocircuit)
signature and proving that it has the strong circuit elimination property (CE). In this spirit, we
conclude this section with the following observation.

\begin{cor}\label{cor:CCircAxiomOMImplCircAxiomOnDual}
  Let $M$ be a cofinitary matroid and $C$ be a circuit signature of $M$ that has the strong circuit
  elimination property (CE). Then there exists a unique cocircuit signature $C^*$ of $M$
  such that the pair $\CC, \CC^*$ provides an orthogonal orientation of $M$. In particular,
  $C^*$ has the strong elimination property (CE) in this case as well.
\end{cor}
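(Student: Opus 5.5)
The first claim, existence and uniqueness of $\CC^*$, follows directly from Proposition~\ref{prop:SignedCircElimImplOrthAxioms}. That result only needs $\CC$ to have (CE); it yields a unique cocircuit signature $\CC^*$ such that the pair $\CC, \CC^*$ provides an orthogonal orientation of $M$. Uniqueness is also covered by Proposition~\ref{prop:OrthAxiomsImplUniquenessOfSignature}. So the real content is the final sentence: $\CC^*$ itself has (CE) as a circuit signature of $M^*$.

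For that, I would pass to the dual. Since $M$ is cofinitary, $M^*$ is finitary. Orthogonality in Definition~\ref{defn:OrthOrient} is symmetric in circuits and cocircuits, so $(M^*, \CC^*, \CC)$ is an orthogonally oriented matroid, and its underlying matroid $M^*$ is finitary. Lemma~\ref{lem:FOrthOMImplSignedCircElim} states that the circuit signature of a finitary orthogonally oriented matroid has (CE). Applied to $(M^*, \CC^*, \CC)$, whose circuit signature is $\CC^*$, this gives exactly that $\CC^*$ has (CE).

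The only point needing care is that Lemma~\ref{lem:FOrthOMImplSignedCircElim} really applies to $M^*$ with the roles swapped. Its proof uses two facts. First, every circuit of the underlying matroid is finite, which holds because $M^*$ is finitary. Second, finite restriction minors of an orthogonally oriented matroid are finite oriented matroids, via Theorem~\ref{thm:OrthImplOrthMinors} and Example~\ref{exmp:FinOriMatsAreOrthOriMats}. Both facts hold for $(M^*, \CC^*, \CC)$, so I expect no genuine obstacle. The main step is recognising that duality swaps $\CC$ and $\CC^*$ without disturbing orthogonality, so no further work is needed.
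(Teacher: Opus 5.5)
Your proposal is correct and matches the paper's proof: existence and uniqueness come from Proposition~\ref{prop:SignedCircElimImplOrthAxioms}, and (CE) for $\CC^*$ comes from applying Lemma~\ref{lem:FOrthOMImplSignedCircElim} to the finitary orthogonally oriented matroid $(M^*, \CC^*, \CC)$. Your explicit remark that dualizing only swaps the roles of $\CC$ and $\CC^*$, leaving orthogonality intact, fills in the step the paper's two-reference proof leaves implicit.
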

\begin{proof}
  Proposition~\ref{prop:SignedCircElimImplOrthAxioms} and Lemma~\ref{lem:FOrthOMImplSignedCircElim}.
\end{proof}

\begin{exmp}\label{exmp:CMWithoutFarkas}
  Let $E = \NN$ and $M$ be the uniform infinite matroid of rank 2 on $E$. The circuits of this
  finitary matroid are the subsets of $\NN$ that contain exactly three elements. Its cocircuits
  are the sets $\NN \setminus \{i\}$ where $i \in \NN$. We introduce a circuit signature $\CC$
  and a cocircuit signature $\CC^*$ of $M$ as follows. First, we want to assign to each circuit
  of $M$ the alternating patterns $(+,-,+)$ and $(-,+,-)$. Formally: We consider the circuits of $M$
  to be triples
  $(i,j,k)$ where $i,j,k \in \NN$ and $1 \leq i < j < k$. Then, we denote by $C_{i,j,k} \in \TPME$
  the signed subset that satisfies $C_{i,j,k}^+ = \{i,k\}$ and $C_{i,j,k}^- = \{j\}$, and set
  $\CC = \{ C_{i,j,k}\ |\ i,j,k \in \NN, 1 \leq i < j < k \} \cup
  \{ -C_{i,j,k}\ |\ i,j,k \in \NN, 1 \leq i < j < k \}$. A cocircuit $\NN \setminus \{i\}$ of $M$
  is signed by assigning a different sign to each of its "legs" $\{ j \in \NN\ |\ j < i \}$
  and $\{ j \in \NN\ |\ j > i \}$: denote by $U_i \in \TPME$ the signed subset of $N$ that
  satisfies $U_i^+ = \{ j \in \NN\ |\ j < i \}$ and $U_i^- = \{ j \in \NN\ |\ j > i \}$, and set
  $\CC^* = \{ U_i\ |\ i \in \NN \} \cup \{ -U_i\ |\ i \in \NN \}$.

  \begin{figure}[h]
    \[
    \begin{array}{rllllllllll}
      E & 1 & 2 & 3 & 4 & 5 & 6 & 7 & 8 & 9 & \ldots\\
      C_{1,2,3} & + & - & + &   &   &   &   &   &   &\\
      C_{1,3,4} & + &   & - & + &   &   &   &   &   &\\
      -C_{4,6,8} &   &   &   & - &   & + &   & - &   &\\
      U_1 &   & - & - & - & - & - & - & - & - & \ldots\\
      -U_5 & - & - & - & - &   & + & + & + & + & \ldots
    \end{array}
    \]
    \caption{Signed circuits and cocircuits of $M$ from Example~\ref{exmp:CMWithoutFarkas}}
  \end{figure}

  We claim that both $\CC$ and $\CC^*$ have the strong circuit elimination property (CE).
  To see that this is true, note that it is not difficult to check that $\CC^*$ has the property
  (CE) by a simple case distinction. The claim then follows from the dual statement of
  Corollary~\ref{cor:CCircAxiomOMImplCircAxiomOnDual}; in particular, the pair $\CC, \CC^*$
  provides an orthogonal orientation of $M$.
\end{exmp}

\subsection{Orthogonality Allows Weaker Types of Circuit Elimination}
In the previous section, we have observed that---in contrast to the finite case---orthogonality
is a rather weak property of generic infinite matroids: while finite orthogonally oriented
matroids (i.e.\ oriented matroids in the usual sense) permit strong circuit elimination, this is
in general not the case for infinite orthogonally oriented matroids. Due to this fact, it is
worthwhile to note that some properties of finite orthogonally oriented matroids related to
ordinary circuit elimination that do not seem to be worth stating separately in the finite case
nevertheless carry over to the infinite setting. In this section, we exhibit two examples of
such properties.\\

Recall that every orthogonally oriented matroid is based on an underlying ordinary matroid.
These matroids form a special subclass of the class of ordinary matroids. Orthogonality adds
structural information and this additional information can be used to refine the circuit
axiom (C3) about ordinary strong circuit elimination as follows.

\begin{prop}\label{prop:OrthImplSpecialCircElim}
  Let $(M, \CC, \CC^*)$ be an orthogonally oriented matroid, $C \in \CC,
  X \subseteq \supp{C}$, and $(C_x\ |\ x \in X)$ be a family
  of elements of $\CC$ such that $\supp{C_x} \cap X = \{x\}$ and
  $x \in \sep(C, C_x)$ for all $x \in X$. Then for every
  $f \in \supp{C} \setminus \left( \bigcup_{x \in X} \sep(C, C_x) \right)$ there exists
  a $D \in \CC$ such that $f \in \supp{D}, D(f) = C(f)$, and $\supp{D}
  \subseteq \left( \supp{C} \cup \bigcup_{x \in X} \supp{C_x} \right) \setminus X$.
\end{prop}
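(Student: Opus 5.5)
The plan is to reduce the statement to the existence of an \emph{unsigned} circuit. Since $\CC$ is a circuit signature, both $D$ and $-D$ belong to $\CC$. Hence once we have any signed circuit $D$ with $f \in \supp{D} \subseteq Y$, where
\[
  Y := \left( \supp{C} \cup \bigcup_{x \in X} \supp{C_x} \right) \setminus X,
\]
we can replace $D$ by $-D$ if necessary to achieve $D(f) = C(f)$. So it remains to show that $f$ lies in some circuit of $M$ contained in $Y$. Ordinary (C3) cannot be applied directly, because $f$ may lie in some $\supp{C_x}$ (with the same sign as in $C$). Instead we argue by contradiction via orthogonality.

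Suppose $f$ lies in no circuit of $M$ contained in $Y$, i.e.\ in no circuit of the restriction $M|Y$. Then $f$ is a coloop of $M|Y$, so $\{f\}$ is a cocircuit of $M|Y = M\!\setminus\!\overline{Y}$. For this step I would use that the scrawl of all circuits of $M|Y$ avoids $f$, so that $\{f\}$ is independent in $(M|Y)^*$ and meets no circuit of $M|Y$; the complement-type characterisation of Lemma~\ref{lem:ScrawlChar} should give this cleanly. Applying Lemma~\ref{lem:StructCircOfMinors} to $M^*$ (where deletion of $\overline{Y}$ in $M$ becomes contraction in $M^*$), we obtain a signed cocircuit $U \in \CC^*$ with $\{f\} \subseteq \supp{U} \subseteq \{f\} \cup \overline{Y}$. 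By reorienting we may assume $U(f) = C(f)$.

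Next we examine the intersections. Since $\supp{C} \subseteq Y \cup X$, we get $\supp{C} \cap \supp{U} \subseteq \{f\} \cup X$. Since $\supp{C_x} \subseteq Y \cup \{x\}$, we get $\supp{C_x} \cap \supp{U} \subseteq \{f, x\}$. Now take any $x \in X \cap \supp{U}$. The circuit $\supp{C_x}$ cannot meet the cocircuit $\supp{U}$ only in $x$, so $f \in \supp{C_x}$, and $C_x \perp U$ forces $U(x)C_x(x) = -U(f)C_x(f)$. Because $f \notin \sep(C, C_x)$ and $f \in \supp{C_x}$, we have $C_x(f) = C(f)$. Because $x \in \sep(C, C_x)$, we have $C_x(x) = -C(x)$. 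Substituting gives $U(x)C(x) = U(f)C(f)$ for every $x \in \supp{C} \cap \supp{U}$ other than $f$. Hence $C|_{\supp{C}\cap\supp{U}} = U|_{\supp{C}\cap\supp{U}}$, and this intersection is nonempty since it contains $f$. This contradicts $C \perp U$.

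The main obstacle I expect is the coloop step: justifying, in the infinite setting, that an element lying in no circuit of $M|Y$ forms a singleton cocircuit of $M|Y$, and then lifting that cocircuit correctly to $M$ via the dual of Lemma~\ref{lem:StructCircOfMinors}. Everything after that is a routine sign computation.
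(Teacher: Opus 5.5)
Your proof is correct. Its core is the same sign computation as the paper's case $\supp{U} = f \cup Y$, but you package it differently.

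\textbf{How the paper argues.} It works in $M|G$ with $G = \supp{C} \cup \bigcup_{x \in X} \supp{C_x}$. It shows that $X \cup f$ is coindependent there, via a three-case analysis of a hypothetical cocircuit contained in $X \cup f$. It then extends $X \cup f$ to a cobasis and takes the fundamental circuit of $f$ with respect to the complementary basis.

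\textbf{What your route changes.} You delete $X$ first and only need that $f$ is not a coloop of $M|Y$. This is somewhat lighter:
\begin{itemize}
\item Your lifted cocircuit always contains $f$, so the paper's case $\supp{U} \subseteq X$ never arises.
\item The case $\supp{U} = \{f\}$ is absorbed into your final orthogonality contradiction.
\item You need neither cobasis extension and fundamental circuits, nor the reorientation making $C$ positive.
\end{itemize}
What the paper's version buys is the stronger intermediate fact that $X \cup f$ is coindependent in $M|G$. It reuses this in the next proposition, when the offending element $e$ is added. Your approach adapts there as well, by working in $M|(Y \setminus e)$.

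\textbf{A correction in the step you flagged.} $\{f\}$ is not independent in $(M|Y)^*$; it is dependent, indeed a circuit of $(M|Y)^*$. The clean justification is the dual form of Lemma~\ref{lem:ScrawlChar}, applied to $M|Y$. Since $\{f\}$ meets no circuit of $M|Y$, it meets none exactly once. Hence $\{f\}$ is a coscrawl of $M|Y$, and therefore a cocircuit. The lift to $U \in \CC^*$ with $\{f\} \subseteq \supp{U} \subseteq \{f\} \cup \overline{Y}$ is then exactly Lemma~\ref{lem:OrthImplUniSigInh}(2) with deletion set $\overline{Y}$. Equivalently, it is Lemma~\ref{lem:StructCircOfMinors} applied to $M^*/\overline{Y} = (M\setminus\overline{Y})^*$. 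Everything after that is fine as written.
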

\begin{proof}
  Let $G = \left( \supp{C} \cup \bigcup_{x \in X} \supp{C_x} \right)$. Since circuits of
  the minor $M|G$ are also circuits of $M$, it suffices to show that there exists a
  circuit $\supp{D} \in \CC(M|G)$ that has the desired properties. To do so, we
  first prove that $X \cup f$ is coindependent in $M|G$. Assume for a contradiction
  that $X \cup f$ contains a cocircuit $\supp{U}$ of $M|G$. Then one of the following
  cases must hold.
  \begin{enumerate*}
    \item $\supp{U} = \{f\}$: this case implies the contradiction
      $|\supp{C} \cap \supp{U}| = 1$.
    \item $\supp{U} \subseteq X$: similar to the previous case, this would imply the contradiction
      $|\supp{C_x} \cap \supp{U}| = 1$ for all $x \in \supp{U}$.
    \item $\supp{U} = f \cup Y$ where $\emptyset \ne Y \subseteq X$: This case implies
      $f \in \supp{C_x}$ for all $x \in Y$ since otherwise there would exist an $x \in Y$
      such that $|\supp{C_x} \cap \supp{U}| = 1$. By using reorientation, we may assume that
      $C$ is positive. Note that this implies $C(f) = C_x(f) = C(x) = 1$ and $C_x(x) = -1$ for
      all $x \in Y$. By Lemma~\ref{lem:OrthImplUniSigInh} it follows that there exists an induced
      signing $U$ of the cocircuit $\supp{U}$. By replacing $U$ by $-U$
      if necessary, we may assume that $U(f) = 1$. Since $C \perp U$, there must exist an $x \in Y$
      such that $U(x) = -1$. But this contradicts $C_x \perp U$, since
      $\supp{C_x} \cap \supp{U} = \{f,x\}$.
  \end{enumerate*}
  It is thus possible to extend $X \cup f$ to a cobasis of $M|G$ to obtain a basis $\BC$
  of $M|G$ such that $\BC \subseteq G \setminus (X \cup f)$. Finally, the fundamental
  circuit $\supp{D}$ of $f$ with respect to $\BC$ satisfies $f \in \supp{D}, \supp{D}
  \subseteq \left( \supp{C} \cup \bigcup_{x \in X} \supp{C_x} \right) \setminus X$, and
  by Lemma~\ref{lem:OrthImplUniSigInh} either $D(f) = C(f)$ or $(-D)(f) = C(f)$ for an induced
  signing $D$ of $\supp{D}$.
\end{proof}

\begin{rem}
  In the context of ordinary matroids and with respect to the circuit axiom (C3),
  Proposition~\ref{prop:OrthImplSpecialCircElim} states the following: the range of elements $f$
  that can be picked and kept during (ordinary) strong circuit elimination is potentially enlarged
  by orthogonality since
  $\supp{C} \setminus \left( \bigcup_{x \in X} \supp{C_x} \right)
  \subseteq \supp{C} \setminus \left( \bigcup_{x \in X} \sep(C, C_x) \right)$. However, the price
  for this enlargement is the potential reduction of the set of elements that can be eliminated
  at once as
  $X \subseteq \bigcup_{x \in X} \sep(C, C_x) \subseteq \bigcup_{x \in X} \supp{C} \cap \supp{C_x}$.
\end{rem}

\begin{rem}
  Of course, if we restrict the setting of Proposition~\ref{prop:OrthImplSpecialCircElim} to finite
  orthogonally oriented matroids, it is known that among all such circuits $D$ there must be at
  least one that additionally satisfies
  $D^+ \subseteq \left( C^+ \cup \bigcup_{x \in X} C_x^+ \right) \setminus X$ and
  $D^- \subseteq \left( C^- \cup \bigcup_ {x \in X} C_x^- \right) \setminus X$ (see
  \citep[Chapter 3]{orimatsbook}). Unfortunately, the same statement for infinite matroids cannot
  be obtained by following the steps of the proof of Proposition~\ref{prop:OrthImplSpecialCircElim}:
  the circuit $D$ one obtains does in general not have this property, as the following example shows.
\end{rem}

\begin{exmp}\label{exmp:NonConfCircAfterElim}
  Consider the digraph shown in Figure~\ref{fig:ExaNonConfCircAfterElim} and its associated (finite)
  orthogonally oriented matroid with underlying matroid $M$ on the ground set
  $E = \{e_1,e_2,e_3,e_4,e_5,e_6\}$.
  \begin{figure}[h]
    \centering
    \begin{tikzpicture}[auto,node distance=1.8cm,
      dot/.style={draw,shape=circle,fill=black,scale=0.5}]
      \node[dot] (v1) {};
      \node[dot] (v2) [right=of v1] {}
      edge [-,postaction={decoration={markings,mark=at position 0.5 with {\arrow[scale=1.2]{<}}},decorate}]
      node {$e_1$} (v1);
      \node[dot] (v3) [right=of v2] {}
      edge [-,postaction={decoration={markings,mark=at position 0.5 with {\arrow[scale=1.2]{<}}},decorate}]
      node {$e_2$} (v2)
      edge [-,bend left=45,
      postaction={decoration={markings,mark=at position 0.5 with {\arrow[scale=1.2]{<}}},decorate}]
      node {$e_6$} (v1);
      \node[dot] (v4) [right=of v3] {}
      edge [-,postaction={decoration={markings,mark=at position 0.5 with {\arrow[scale=1.2]{<}}},decorate}]
      node {$e_3$} (v3)
      edge [-,bend right=45,
      postaction={decoration={markings,mark=at position 0.5 with {\arrow[scale=1.2]{>}}},decorate}]
      node {$e_5$} (v2)
      edge [-,bend left=90,thick,
      postaction={decoration={markings,mark=at position 0.5 with {\arrow[scale=1.2]{>}}},decorate}]
      node {$e_4$} (v1);
    \end{tikzpicture}
    \caption{Digraph of Example~\ref{exmp:NonConfCircAfterElim}\label{fig:ExaNonConfCircAfterElim}}
  \end{figure}
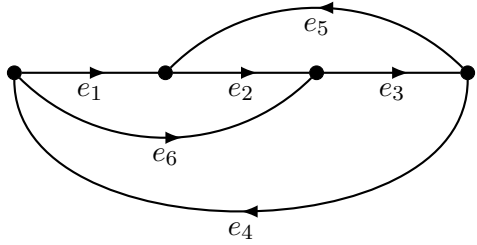
  It contains the signed circuits $C = (1,1,1,1,0,0)$ and $C_{e_1} = (-1,0,1,0,1,1)$. We choose
  $f = e_2, X = \{e_1\}$, and follow the steps of the proof of
  Proposition~\ref{prop:OrthImplSpecialCircElim}: First, we extend $X \cup f = \{e_1,e_2\}$ to a
  cobasis of $M$, say $\{e_1,e_2,e_3\}$. Then we have to add $e_2$ to the basis $B = \{e_4,e_5,e_6\}$
  to obtain the $B$-fundamental circuit $\supp{D} = \{e_2,e_4,e_5,e_6\}$. The induced signings of
  this circuit are $D = (0,1,0,-1,1,-1)$ and $-D = (0,-1,0,1,-1,1)$, i.e.\ both $D^-$ and $-D^-$ are
  non-empty. But since $\left( C^- \cup C_{e_1}^- \right) \setminus X = \emptyset$, neither of the
  inclusions $D^- \subseteq \left( C^- \cup \bigcup_ {x \in X} C_x^- \right) \setminus X$ and
  $-D^- \subseteq \left( C^- \cup \bigcup_ {x \in X} C_x^- \right) \setminus X$ is satisfied.
\end{exmp}

Let us return to the setting of Proposition~\ref{prop:OrthImplSpecialCircElim} for one last time:
assume that $D$ is a circuit for which at least one of the inclusions
$D^+ \subseteq \left( C^+ \cup \bigcup_{x \in X} C_x^+ \right) \setminus X$ and
$D^- \subseteq \left( C^- \cup \bigcup_ {x \in X} C_x^- \right) \setminus X$ fails, and let $e$
be one of the elements of $D$ that makes the respective inclusion fail. Under certain conditions,
it is then possible to find another circuit in $\supp{C} \cup \bigcup_{x \in X} \supp{C_x}$ that
neither includes $X$ nor the offending element $e$.

\begin{prop}
  Let $M, C, X, (C_x\ |\ x \in X), f$, and $D$ be as in Proposition~\ref{prop:OrthImplSpecialCircElim}.
  Denote the set $\left( C^+ \cup \bigcup_{x \in X} C_x^+ \right)$ by $A$ and the set
  $\left( C^- \cup \bigcup_{x \in X} C_x^- \right)$ by $B$. If there exists an
  $e \in (D^- \cap (A \setminus B)) \cup (D^+ \cap (B \setminus A))$,
  then there exists
  a $D' \in \CC$ such that $f \in \supp{D'}, D'(f) = C(f)$, and $\supp{D'}
  \subseteq \left( \supp{C} \cup \bigcup_{x \in X} \supp{C_x} \right)
  \setminus \left( X \cup e \right)$.
\end{prop}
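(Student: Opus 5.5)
We repeat the proof of Proposition~\ref{prop:OrthImplSpecialCircElim}, but inside the smaller restriction and with the larger set $X \cup f \cup e$. Let $G = \supp{C} \cup \bigcup_{x \in X} \supp{C_x}$; all circuits of $M|G$ are circuits of $M$. It suffices to show that $X \cup \{e,f\}$ is coindependent in $M|G$. Then we extend it to a cobasis of $M|G$, take the complementary basis $\BC \subseteq G \setminus (X \cup \{e,f\})$, and let $\supp{D'}$ be the fundamental circuit of $f$ with respect to $\BC$. Lemma~\ref{lem:OrthImplUniSigInh} (applied to the restriction) gives a signing $D'$ of $\supp{D'}$ in $\CC$, and we choose the sign so that $D'(f) = C(f)$. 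This $D'$ has all the required properties.

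Note first that $e \notin X$, since $\supp{D} \cap X = \emptyset$, and $e \neq f$. Indeed, $f \in \supp{D}$ with $D(f) = C(f)$. Also $f \notin \bigcup_{x \in X} \sep(C, C_x)$, so every $C_x$ containing $f$ agrees with $C$ at $f$. Hence $f$ lies in $A \setminus B$ if $C(f) = 1$ and in $B \setminus A$ if $C(f) = -1$, and in either case $D$ agrees with $C$ at $f$. So $f$ is not an offending element.

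Now suppose for a contradiction that $X \cup \{e,f\}$ contains a cocircuit $\supp{U}$ of $M|G$. Take $U$ signed by the induced cocircuit signature of $M|G$, which is orthogonal to the induced circuit signature by Theorem~\ref{thm:OrthImplOrthMinors}. Reorient so that $C$ is positive. Then $A \setminus B$ consists of elements carrying sign $+$ in every one of $C, C_x$ that contains them, and $B \setminus A$ of elements carrying sign $-$ in all of them. After a reorientation on $e$ if needed, we may assume $e \in D^- \cap (A \setminus B)$, i.e.\ $e$ is positive in every circuit among $C, C_x$ containing it and negative in $D$.

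If $e \notin \supp{U}$, we are in the situation already excluded in the proof of Proposition~\ref{prop:OrthImplSpecialCircElim}, since $X \cup f$ is coindependent. So $e \in \supp{U}$. Since $\supp{D} \cap \supp{U} \subseteq \{e,f\}$ and a circuit never meets a cocircuit in exactly one element, we get $\supp{D} \cap \supp{U} = \{e,f\}$. Orthogonality $D \perp U$ together with $D(e) = -1$ and $D(f) = C(f) = 1$ forces $U(e) = U(f)$; normalize so that $U(e) = U(f) = 1$.

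Next consider the circuits $C$ and $C_x$, whose entries on $\supp{U}$ are known: on $C$ all signs are $+$, and on $C_x$ we have $C_x(x) = -1$, $C_x(e) = +1$ if $e \in \supp{C_x}$, and $C_x(f) = +1$ if $f \in \supp{C_x}$. Let $Y = \supp{U} \cap X$. If $Y = \emptyset$, then $\supp{U} = \{e,f\}$ and $C$ would have to meet it in two elements with equal signs, contradicting $C \perp U$ whenever $e,f \in \supp{C}$. The subcase $e \notin \supp{C}$ must also be handled; then $C \cap U = \{f\}$ is already a contradiction. If $Y \neq \emptyset$, then $C \perp U$ with $C$ positive and $U(e) = U(f) = 1$ forces some $x \in Y$ with $U(x) = -1$ (using that $C$ meets $\supp{U}$ in at least two elements). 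Then $\supp{C_x} \cap \supp{U} \subseteq \{x, e, f\}$ contains $x$, hence at least one of $e, f$. On this set $C_x U$ takes the values $C_x(x)U(x) = +1$, $C_x(e)U(e) = +1$ and $C_x(f)U(f) = +1$. All products are equal, contradicting $C_x \perp U$.

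The main obstacle is this sign bookkeeping. One must check carefully that the offending element $e$ has a uniform sign across all of $C$ and the $C_x$ containing it, which is exactly what $e \in A \setminus B$ (resp.\ $B \setminus A$) provides. One must also handle the degenerate subcases where $e$ or $f$ lies in only some of these circuits, using the fact that circuit--cocircuit intersections never have size one.
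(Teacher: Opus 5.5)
Your proof is correct and follows the paper's route. Like the paper, you restrict to $M|G$ and prove $X \cup \{e,f\}$ coindependent by ruling out a cocircuit $\supp{U} \subseteq X \cup \{e,f\}$: $D \perp U$ forces $\supp{D} \cap \supp{U} = \{e,f\}$, $C \perp U$ yields some $x \in Y$, and $C_x \perp U$ then fails, after which you take the fundamental circuit of $f$. Your normalization ($C$ positive, $e \in D^- \cap (A \setminus B)$) only streamlines the paper's sign products, and your explicit handling of $\supp{U} = \{e,f\}$ covers a subcase that the paper's split (which assumes $Y \neq \emptyset$) leaves implicit.
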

\begin{proof}
  As in the proof of Proposition~\ref{prop:OrthImplSpecialCircElim}, we let
  $G = \left( \supp{C} \cup \bigcup_{x \in X} \supp{C_x} \right)$, and show that there exists
  a circuit $\supp{D'} \in \CC(M|G)$ that has the desired properties. From the proof of
  Proposition~\ref{prop:OrthImplSpecialCircElim}, we know that $X \cup f$ is coindependent in
  $M|G$. Thus it suffices to prove that $X \cup f$ stays coindependent in $M|G$ if we add $e$
  to this set. Assume for a contradiction that $X \cup \{e,f\}$ contains a cocircuit $\supp{U}$
  of $M|G$. Then one of the following two cases must hold.
  \begin{enumerate*}
    \item $\supp{U} \subseteq e \cup Y$ where $Y \subseteq X$: Since
      $|\supp{D} \cap \supp{U}| = 1$ if $e \in \supp{U}$, we find that
      $\supp{U} \subseteq Y \ne \emptyset$. But then $|\supp{C_x} \cap \supp{U}| = 1$ for every
      $x \in Y$, a contradiction again.
    \item $\supp{U} = Y \cup \{e,f\}$ where $\emptyset \ne Y \subseteq X$: Let $U$ be an induced
      signing of the cocircuit $\supp{U}$. From $U \perp D$ and
      $\supp{D} \cap \supp{U} = \{e,f\}$, it follows that $D(f)U(f) = -D(e)U(e)$. Since
      $C \perp U$, there must exist an $x \in Y \cup e$ such that $C(f)U(f) = -C(x)U(x)$. If $x = e$,
      then $C(e) = -D(e)$ and $C(f)U(f) = -C(e)U(e) = D(e)U(e) = -D(f)U(f) = -C(f)U(f)$, a
      contradiction. Thus $x \in Y$ must hold. Since $x \in \supp{C_x} \cap \supp{U}
      \subseteq \{e,f,x\}$, the circuit $C_x$ includes $e, f$ or both $e$ and $f$. Because of
      $C_x \perp U$ we conclude that $C_x(x)U(x) = -C_x(e)U(e)$ or $C_x(x)U(x) = -C_x(f)U(f)$. In
      case of the former $C_x(e) = -D(e)$ by the choice of $e$ and thus
      $C_x(x)U(x) = D(e)U(e) = -D(f)U(f) = -C(f)U(f) = C(x)U(x) = -C_x(x)U(x)$, a contradiction. In
      case of the latter $C_x(x)U(x) = -C_x(f)U(f) = -C(f)U(f) = C(x)U(x) = -C_x(x)U(x)$, a
      contradiction again.
  \end{enumerate*}
  It follows that $X \cup \{e,f\}$ is coindependent in $M|G$. Now we can proceed analogously
  to the proof of Proposition~\ref{prop:OrthImplSpecialCircElim} in order to show that there exists
  a signed circuit $D'$ of $M$ that has the desired properties.
\end{proof}

\section{An Axiom System Based on the Farkas Property}\label{sec:FPAxsioms}
The differing results of Sections~\ref{sec:OrthAxioms} and \ref{sec:NoPlainCircAxioms}
suggest the following: when looking for possible candidates of properties of circuit signatures
that might lead to the definition of a sensible class of infinite oriented matroids, it might
be more fruitful to consider those that explicitly or implicitly make statements about pairs
of circuit and cocircuit signatures. Addressing the problem of duality becomes easier in this
case. The so called {\em Farkas property} of pairs of sets of signed subsets
is an example of such a property.

\begin{defn}\label{defn:FarkasProp}
  A pair $\SC, \TC$ of sets of signed subsets of a set $E$ has the \textbf{Farkas property} if it
  satisfies the following condition:
  \begin{enumerate*}
    \item[(FP)] For all $e \in E$, either there is an $X \in \SC$ such that $e \in \supp{X}$ and $X$
    is positive, or there is a $Y \in \TC$ such that $e \in \supp{Y}$ and $Y$ is positive, but not
    both.
  \end{enumerate*}
\end{defn}

\subsection{FP-Orientable Matroids}\label{ssec:FPOriMats}
In order to use the Farkas property (FP) in the definition of oriented matroids, the following two steps
are necessary. First, it must be decided to which pairs of sets of signed subsets it relates to. Second,
the property has to be combined with a statement about minors of matroids. Bachem and Kern demonstrate
this approach in relation to finite oriented matroids in \citep[Section 5.3]{bachemlpd}, for example.
Since the definition of oriented matroids employed by them is not so prevalent in the literature, we
 briefly state it here in our notation (cf. \citep[Definitions 5.6 and 5.8]{bachemlpd}). In particular,
 it is worth noting that it explicitly does not mention an underlying ordinary matroid.

\begin{defn}\label{defn:BKOriMats}
  Let $\SC, \TC$ be a pair of sets of signed subsets of a finite set $E \ne \emptyset$, and let
  $F, G \subseteq E$ such that $F \cap G = \emptyset$. Then
  \[
    \SC/F\!\setminus\!G = \left\{ X|_{E\setminus F \cup G}\ \left|\ X \in \SC,\
                                                                     \supp{X} \subseteq E\setminus G \right.\right\}
  \]
  is called a \textbf{minor of $\bm{\SC}$} and $(\SC/F\!\setminus\!G, \TC/G\!\setminus\!F)$ is called a
  \textbf{minor of $\bm{(\SC, \TC)}$}. The pair $(\SC, \TC)$ is called an \textbf{oriented matroid on
  $\bm{E}$} if and only if $(\SC, \TC)$ satisfies the following conditions:
  \begin{enumerate*}
    \item $\SC$ and $\TC$ are symmetric.
    \item Every reorientation of every minor of $(\SC, \TC)$ has the Farkas property (FP).
  \end{enumerate*}
\end{defn}

\begin{rem}
  To connect Definition~\ref{defn:BKOriMats} to the notion of oriented matroids introduced earlier, we note
  that if $(\SC, \TC)$ is an oriented matroid in the sense of Bachem and Kern then the elements of $\SC$
  with minimal non-empty supports form the circuits of an oriented matroid, of which the elements of $\TC$
  with minimal non-empty support form the cocircuits.
\end{rem}

Now, on the basis of Definition~\ref{defn:BKOriMats}, in this section, we show how the Farkas Property can
be used to define another class of infinite oriented matroids. We start our endeavor with the following
rather technical definition which connects top level circuit signatures with signed circuits of minors.

\begin{defn}\label{defn:SSIndByCircSig}
  Let $M$ be a matroid on $E$, let $F, G \subseteq E$ such that $F \cap G = \emptyset$, and let
  $N = M/F\!\setminus\!G$ be a minor of $M$. For a consistent notation we denote the set of ordinary
  circuits of $N$ by $\supp{\CC(N)}$ and the set of ordinary cocircuits by $\supp{\CC^*(N)}$.
  Let $\CC, \CC^*$ be a pair of circuit and cocircuit signatures of $M$. Then $\CC$ and $\CC^*$ induce
  the following sets of signed subsets of $E(N)$:
  \[
    \SC^\CC(N) = \left\{ C|_{E(N)}\ \left|\ C \in \CC,\ \supp{C|_{E(N)}} \in \supp{\CC(N)},\
                                            \supp{C} \subseteq E(N) \cup F \right.\right\},
  \]
  \[
    \SC^{\CC^*}(N) = \left\{ U|_{E(N)}\ \left|\ U \in \CC^*,\ \supp{U|_{E(N)}} \in \supp{\CC^*(N)},\
                                                \supp{U} \subseteq E(N) \cup G \right.\right\}
  \]
\end{defn}

\begin{rem}\label{rem:SSIndByCircSig}
  Let $\SC^\CC(N), \SC^{\CC^*}(N)$ be two sets of signed subsets according to Definition~\ref{defn:SSIndByCircSig}.
  \begin{enumerate*}
    \item By Lemma~\ref{lem:StructCircOfMinors} the supports of the elements of $\SC^\CC(N)$ are
      exactly the circuits of $N$, i.e.\ $\{ \supp{X}\ |\ X \in \SC^\CC(N) \} = \supp{\CC(N)}$.
    \item In general, $\SC^\CC(N)$ is not a circuit signature of $N$. If $N$ is a proper minor of $M$,
      then it is possible that $\SC^\CC(N)$ contains two signed subsets $X,Y$ of $E(N)$ such that
      $\supp{X} = \supp{Y}$ but $X \ne Y, -Y$. On the other hand, if $\CC$ belongs to a pair of
      circuit and cocircuit signatures providing an orthogonal orientation of $M$ on $E$ according
      to Definition~\ref{defn:OrthOrient}, then Lemma~\ref{lem:OrthImplUniSigInh} implies that
      $\SC^\CC(N)$ is indeed a circuit signature of $N$.
    \item Dually, the statements (1) and (2) hold for $\SC^{\CC^*}(N)$ with respect to cocircuits,
      coscrawls, and cocircuit signatures of $N$.
  \end{enumerate*}
\end{rem}

Pairs built by the sets of signed subsets as we have just defined them in Definition~\ref{defn:SSIndByCircSig}
are the pairs we want to have the Farkas property (FP). This leads us to the following definition which
yields a class of orientable matroids different from that discussed in Section~\ref{sec:OrthAxioms}, as
we show in the course of this section.

\begin{defn}\label{defn:FPAxiom}
  Let $M$ be a matroid on a set $E$. Let $\CC$ be a circuit signature of $M$ and $\CC^*$ be a
  cocircuit signature of $M$. Then the triple $\MC = (M, \CC, \CC^*)$ is called a \textbf{Farkas
  property oriented matroid (on $\bm{E}$)}, or \textbf{FP-oriented matroid (on $\bm{E}$)}
  for short, if and only if $\CC$ and $\CC^*$ satisfy the following condition:
  \begin{enumerate*}
    \item[(FA)] If $N$ is a minor of $M$ with ground set $E(N)$ and $A \subseteq E(N)$, then
      the pair $_{-A}\SC^\CC(N), _{-A}\SC^{\CC^*}(N)$ has the Farkas property (FP).
  \end{enumerate*}
  In this case we say that $M$ is \textbf{FP-orientable (on $\bm{E}$)} and that the pair
  $\CC, \CC^*$ \textbf{provides an FP-orientation of $\bm{M}$ (on $\bm{E}$)}. The matroid $M$ is again
  referred to as the \textbf{underlying matroid of $\bm{\MC}$}. Finally, we define the \textbf{vectors
  of $\bm{\MC}$} as the vectors of $M$ with respect to $\CC$ and accordingly the \textbf{covectors of
  $\bm{\MC}$} as the covectors of $M$ with respect to $\CC^*$.
\end{defn}

\begin{rem}\label{rem:AboutStrongCondDefFPOriMats}
  Let $M$ be a matroid on $E$ and $\CC, \CC^*$ be a pair of circuit and cocircuit signatures of $M$. Let $N$
  be a minor of $M$. Note that in contrast to finite oriented matroids and the more general approach chosen
  in \citep[Section 5.3]{bachemlpd}, the definition of FP-oriented matroids explicitly refers to signed
  circuits/cocircuits of minors via the sets $\SC^\CC(N)$ and $\SC^{\CC^*}(N)$. The reason for this is that
  we want FP-oriented matroids to have the following property: the pair $\CC, \CC^*$ as well as every pair
  of circuit and cocircuit signatures induced by $\CC$ and $\CC^*$ on minors of $M$ should have the
  Farkas property (FP). As we will see in the course of Sections~\ref{ssec:PPForSSS} and
  \ref{ssec:MinorsAndPropsFPOriMats}, the strong conditions contained in Definition~\ref{defn:SSIndByCircSig}
  guarantee that this is the case.\\
  Nevertheless, we consider it an open question whether Definition~\ref{defn:SSIndByCircSig} can be weakened
  in such a way that the aforementioned property is still fulfilled. We briefly discuss two possible weakenings
  to illustrate the problems that may arise. On the one hand, one could replace the sets $\SC^\CC(N)$ and
  $\SC^{\CC^*}(N)$ by the following two sets where $\VC$ and $\WC$ denote the set of vectors and covectors of
  $M$ according to $\CC$ and $\CC^*$, respectively:
    \[
      \SC^\VC(N) = \left\{ X|_{E(N)}\ \left|\ X \in \VC,\
                                              \supp{X} \subseteq E(N) \cup F \right.\right\},
    \]
    \[
      \SC^\WC(N) = \left\{ Y|_{E(N)}\ \left|\ Y \in \WC,\
                                              \supp{Y} \subseteq E(N) \cup G \right.\right\}.
    \]
  As we will see in Example~\ref{exmp:CMWithoutFarkasCont} below, this weakening of Definition~\ref{defn:SSIndByCircSig}
  can lead to the situation that the specific pair $\SC^\VC(M), \SC^\WC(M)$ has the Farkas property (FP)
  while the pair $\CC, \CC^*$ does not. This means that the Farkas property (FP) for vectors and covectors does not
  imply (FP) for circuit and cocircuit signatures.\\
  On the other hand, one could drop the conditions $\supp{C|_{E(N)}} \in \supp{\CC(N)}$ and
  $\supp{U|_{E(N)}} \in \supp{\CC^*(N)}$ in the original definition of the sets $\SC^\CC(N)$ and $\SC^{\CC^*}(N)$,
  respectively, and consider the following two sets instead:
  \[
    \tilde{\SC}^\CC(N) = \left\{ C|_{E(N)}\ \left|\ C \in \CC,\
                                                    \supp{C} \subseteq E(N) \cup F \right.\right\},
  \]
  \[
    \tilde{\SC}^{\CC^*}(N) = \left\{ U|_{E(N)}\ \left|\ U \in \CC^*,\
                                                        \supp{U} \subseteq E(N) \cup G \right.\right\}.
  \]
  We suspect that this weakening of Definition~\ref{defn:SSIndByCircSig} could instead lead to the situation where
  the pair $\CC, \CC^*$ has the Farkas property (FP) while the pair of circuit and cocircuit signatures induced
  by $\CC$ and $\CC^*$ on a specific minor $N$ does not. However, we also consider it an open question whether an
  example showcasing this behavior exists. We make this question precise as Open Question~\ref{qstn:4PImplFA} below.
\end{rem}

\subsection{A Painting Property of Sets of Signed Subsets}\label{ssec:PPForSSS}
Before we continue to examine the properties of FP-oriented matroids, we supplement their
definition with a specific instance of a so called {\em painting property} of pairs of sets
of signed subsets. Properties labeled like this are in general more technically oriented and can
be found in graduated versions in the literature (see for instance \citep[Section 3.4]{orimatsbook}).
This supplementation has the following benefits: First, it makes the definition of FP-oriented
matroids as well as the discussion of examples of such matroids more accessible. Second, it
considerably simplifies the proofs of statements about FP-oriented matroids, including the claim
that the class of FP-oriented matroids is closed under taking minors.

\begin{defn}\label{defn:SS4PaintProp}
  A pair $\SC, \TC$ of sets of signed subsets of a set $E$ has the \textbf{4-painting property}
  if and only if it satisfies the following condition:
  \begin{enumerate*}
    \item[(4P)] For all 4-partitions $E = B \dot{\cup} W \dot{\cup} G \dot{\cup} R$ and
    $e \in B \cup W$, either there is an $X \in \SC$ such that
    $e \in \supp{X} \subseteq B \cup W \cup G, \supp{X} \cap B \subseteq X^+$, and
    $\supp{X} \cap W \subseteq X^-$, or there is a $Y \in \TC^*$ such that
    $e \in \supp{Y} \subseteq B \cup W \cup R,
    \supp{Y} \cap B \subseteq Y^+$, and $\supp{Y} \cap W \subseteq Y^-$, but not
    both.
  \end{enumerate*}
\end{defn}

Next, we establish a connection between FP-oriented matroids and the 4-painting property (4P).
Similar to the finite case, the pair of circuit and cocircuit signatures of an FP-oriented
matroid has the 4-painting property (4P).

\begin{lem}\label{lem:FPOrientationsImpl4P}
  Let $(M, \CC, \CC^*)$ be a Farkas property oriented matroid on $E$. Then the pair $\CC, \CC^*$
  has the 4-painting property (4P).
\end{lem}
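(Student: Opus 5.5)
Given a 4-partition $E = B \dot{\cup} W \dot{\cup} G \dot{\cup} R$ and $e \in B \cup W$, I would pass to the minor $N = M/R\!\setminus\!G$ and reorient on $A = W$. This turns the painting condition into the Farkas condition for this minor, which (FA) supplies. (I read the $\TC^*$ in (4P) as $\TC$, here $\CC^*$.) Note that $E(N) = B \cup W$, and that the contracted set is $R$ and the deleted set is $G$. In Definition~\ref{defn:SSIndByCircSig} with $F = R$ the elements of $\SC^\CC(N)$ come from signed circuits $C$ with $\supp{C} \subseteq B \cup W \cup R$. That is the wrong side, so I swap roles and take $N = M/G\!\setminus\!R$, i.e.\ $F = G$, $E(N) = B \cup W$. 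Then:
\begin{itemize*}
  \item $\SC^\CC(N)$ consists of restrictions $C|_{B \cup W}$ of signed circuits $C$ with $\supp{C} \subseteq B \cup W \cup G$ and $\supp{C|_{B\cup W}}$ a circuit of $N$;
  \item $\SC^{\CC^*}(N)$ consists of restrictions of signed cocircuits $U$ with $\supp{U} \subseteq B \cup W \cup R$ and restricted support a cocircuit of $N$.
\end{itemize*}
By (FA), the pair ${}_{-W}\SC^\CC(N), {}_{-W}\SC^{\CC^*}(N)$ has (FP) at $e$. A positive element of ${}_{-W}\SC^\CC(N)$ containing $e$ is exactly some $C|_{B\cup W}$ that is positive on $B$ and negative on $W$. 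Since $\supp{C} \subseteq B \cup W \cup G$, the circuit $C$ itself is the required $X$. Dually, a positive element of ${}_{-W}\SC^{\CC^*}(N)$ yields the required $Y$. This gives the ``at least one'' half.

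For the ``not both'' half, the plan is as follows. Suppose $X \in \CC$ and $Y \in \CC^*$ both satisfy the painting conditions for $e$. Then $\supp{X} \cap \supp{Y} \subseteq B \cup W$, and on this intersection $X$ and $Y$ agree in sign (both are $+$ on $B$ and $-$ on $W$). The intersection contains $e$, hence is nonempty, so $X$ and $Y$ are not orthogonal. It therefore suffices to know that an FP-orientation is orthogonal, i.e.\ that (O) holds.

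The main step is thus to derive (O) from (FA). Take a signed circuit $C$ and a signed cocircuit $U$ with $C$ not orthogonal to $U$; after replacing $U$ by $-U$ if necessary they agree on $I = \supp{C} \cap \supp{U} \ne \emptyset$. I would apply (FA) to the minor $N = M/(\supp{C} \setminus I) \setminus (\supp{U} \setminus I)$, or more simply to $M$ itself with $A = C^-$, aiming to find an element whose signs witness both a positive circuit and a positive cocircuit. On $M$ itself, after reorienting on $C^-$, the circuit $C$ becomes positive but $U$ need only be positive on $I$. So I would use the minor instead: contract $\overline{\supp{C} \cup \supp{U}} \cup (\supp{C}\setminus I)$ partially. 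Concretely, delete $E \setminus (\supp{C} \cup \supp{U})$ and contract $\supp{U}\setminus I$. Then $C$ survives as a circuit of $N$ (its support avoids the contracted set), and $U$ restricts to a coscrawl containing a cocircuit of $N$ that meets $I$.

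The expected obstacle is ensuring that the restricted sets lie in $\SC^\CC(N)$ and $\SC^{\CC^*}(N)$ with \emph{minimal} supports. I would handle this via Lemma~\ref{lem:StructCircOfMinors} and Lemma~\ref{lem:ScrawlChar}, choosing a cocircuit of $N$ inside $\supp{U} \cap E(N)$ through a chosen $f \in I$, and lifting it to a signed cocircuit of $M$ contained in $\supp{U} \cup$ (deleted set). Its sign agreement with $U$ on $I$ is the delicate point. If this fails I would instead argue by contradiction, choosing $C, U$ with $|I|$ minimal and using (FP) on the two-element reductions given by Lemma~\ref{lem:2ElemCircCocircInters}.
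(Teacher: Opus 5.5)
The existence half of your proof is correct and is the paper's argument: take $N = M/G\!\setminus\!R$ and reorient on $W$. Reducing the ``not both'' half to orthogonality is also legitimate, since a pair $X, Y$ satisfying both alternatives agrees in sign on $\supp{X} \cap \supp{Y} \ni e$.

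\textbf{The gap.} You never actually derive (O) from (FA). That step is left as a plan with an unresolved ``delicate point'' and an unspecified fallback. Moreover, the concrete minor you commit to is built the wrong way round.
\begin{itemize}
\item You contract $\supp{U} \setminus I$. But elements of $\SC^{\CC^*}(N)$ come only from signed cocircuits whose support lies in $E(N)$ together with the \emph{deleted} set. So $U$ itself contributes nothing to $\SC^{\CC^*}(N)$.
\item A circuit that avoids the contracted set is only guaranteed to be dependent in the contraction, not to be a circuit. For example, take the uniform matroid of rank $2$ on $\{1,2,3,4\}$ with $\supp{C} = \{1,2,3\}$ and $\supp{U} = \{2,3,4\}$. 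Contracting $4$ leaves the rank-$1$ uniform matroid on $\{1,2,3\}$. There $\{1,2,3\}$ is not a circuit, and $I = \{2,3\}$ contains no cocircuit.
\item Even with the roles set correctly, $M/(\supp{C}\setminus I)\!\setminus\!(\supp{U}\setminus I)$, you only know that $I$ is dependent and codependent. Extracting a circuit and a cocircuit inside $I$ with matching signs is exactly the problem you left open.
\item The fallback via Lemma~\ref{lem:2ElemCircCocircInters} gives no control over the signs of the cocircuit it produces, so it does not close the gap.
\end{itemize}

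\textbf{The missing idea.} No minor is needed, only a better reorientation set. Since $C$ and $U$ agree on $I$, the set $A = C^- \cup U^-$ satisfies $A \cap \supp{C} = C^-$ and $A \cap \supp{U} = U^-$. Hence ${}_{-A}C$ and ${}_{-A}U$ are both positive. Because $\SC^\CC(M) = \CC$ and $\SC^{\CC^*}(M) = \CC^*$, (FA) for the trivial minor $M$ then fails at any $e \in I$.

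You tried $A = C^-$ on $M$ and stopped because $U$ is only positive on $I$. But $U^- \setminus I$ lies outside $\supp{C}$, so you can flip it as well without affecting $C$.

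The paper applies this trick directly to the ``not both'' half, without going through orthogonality. It reorients $M$ on $W \cup \{f \in G \mid X(f) = -1\} \cup \{f \in R \mid Y(f) = -1\}$, which makes $X$ and $Y$ simultaneously positive. In the paper, orthogonality of FP-orientations is obtained only afterwards, as a consequence of this very lemma, so you were right not to cite it.
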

\begin{proof}
  Let $E = B \dot{\cup} W \dot{\cup} G \dot{\cup} R$ be a 4-partition of $E$ and
  $e \in B \dot{\cup} W$. First, we show that at least one of the alternatives of (4P) holds.
  Let $N$ be the minor $M/G\!\setminus\!R$ of $M$. Then the pair of signed subsets
  $\SC = _{-W}\SC^\CC(N), \TC = _{-W}\SC^{\CC^*}(N)$ has the Farkas property (FP). Thus, either
  there is a positive $C' \in \SC$ such that $e \in \supp{C'} \subseteq B \cup W$, or there is a
  positive $U' \in \TC$ such that $e \in \supp{U'} \subseteq B \cup W$, but not both. By definition
  of $\SC$ and $\TC$, this means the following. In the first case, there exists a $C \in \CC$ such
  that $\supp{C} \subseteq \supp{C'} \cup G \subseteq B \cup W \cup G,
  e \in \supp{C} \cap (B \cup W), \supp{C} \cap B \subseteq C^+$, and $\supp{C} \cap W \subseteq C^-$.
  In the second case, there exists a $U \in \CC^*$ such that
  $\supp{U} \subseteq \supp{U'} \cup R \subseteq B \cup W \cup R, e \in \supp{U} \cap (B \cup W),
  \supp{U} \cap B \subseteq U^+$, and $\supp{U} \cap W \subseteq U^-$. Thus it remains to show that
  at most one of the alternatives of (4P) holds. Assume for a contradiction that $C \in \CC$ and
  $U \in \CC^*$ satisfy both alternatives. Let $G' = \{ f \in G\ |\ C(f) = -1 \},
  R' = \{ f \in R\ |\ U(f) = -1 \}, \SC' = _{-(W \cup G' \cup R')}\SC^\CC(M)$, and
  $\TC' = _{-(W \cup G' \cup R')}\SC^{\CC^*}(M)$. Then $_{-(W \cup G')}C$ and $_{-(W \cup R')}U$
  both are positive elements of $\SC'$ and $\TC'$ containing $e$. This contradicts the fact that the
  pair $\SC', \TC'$ has the Farkas property (FP).
\end{proof}

In the finite case, it can be shown that the reverse statement of Lemma~\ref{lem:FPOrientationsImpl4P}
holds for oriented matroids. In fact, the previous lemma and a weaker version of the following lemma
are usually combined into a single proposition which is known as {\em Minty's Lemma} in the literature
(see for instance \citep[Propositon 5.12]{bachemlpd}). However, since the definition of FP-oriented
matroids $(M, \CC, \CC^*)$ explicitly refers to circuits and cocircuits of $M$, the pair $\CC, \CC^*$
must be augmented with a specific additional property for the reverse statement to hold.

\begin{lem}\label{lem:From4PToFPOrientations}
  Let $M$ be a matroid on a set $E$. Let $\CC$ be a circuit signature of $M$ and $\CC^*$
  be a cocircuit signature of $M$. Then $(M, \CC, \CC^*)$ is a Farkas property oriented matroid
  on $E$ if the pair $\CC, \CC^*$ satisfies the following conditions:
  \begin{enumerate}
    \item The pair $\CC, \CC^*$ has the 4-painting property (4P).
    \item Let $E = B \dot{\cup} W \dot{\cup} G \dot{\cup} R$ be a 4-partition of $E$ and
    $e \in B \dot{\cup} W$. If the first alternative of (4P) holds, then among all the
    $C \in \CC$ satisfying this alternative there is one such that $\supp{C|_{B \cup W}}$
    is a circuit of $M/G\!\setminus\!R$. Similarly, if the second alternative of (4P) holds,
    then among all the $U \in \CC^*$ satisfying this alternative there is one such that
    $\supp{U|_{B \cup W}}$ is a cocircuit of $M/G\!\setminus\!R$.
  \end{enumerate}
\end{lem}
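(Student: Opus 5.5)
We have to verify (FA): for every minor $N = M/F\!\setminus\!G'$ with ground set $E(N)$ and every $A \subseteq E(N)$, the pair $_{-A}\SC^\CC(N), {}_{-A}\SC^{\CC^*}(N)$ has the Farkas property. Fix $e \in E(N)$. The plan is to translate the Farkas alternatives for the reoriented induced sets into the two alternatives of (4P) for a suitably chosen 4-partition of $E$. Put $B = E(N)\setminus A$, $W = A$, $G = F$ (the contracted set) and $R = G'$ (the deleted set). Then $E = B \dot\cup W \dot\cup G \dot\cup R$ and $e \in B\cup W$.

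\emph{Existence of one alternative.} By condition (1), one of the alternatives of (4P) holds for this partition and $e$.

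Suppose it is the first alternative. Condition (2) then yields a $C \in \CC$ with the following properties:
\begin{itemize*}
  \item[(a)] $e \in \supp{C} \subseteq B\cup W\cup G$;
  \item[(b)] $C$ is positive on $B$ and negative on $W$;
  \item[(c)] $\supp{C|_{B\cup W}}$ is a circuit of $N = M/G\!\setminus\!R$.
\end{itemize*}
By Definition~\ref{defn:SSIndByCircSig}, $C|_{E(N)} \in \SC^\CC(N)$, since its support is a circuit of $N$ and $\supp{C} \subseteq E(N)\cup F$. Reorienting on $A = W$ turns $C|_{E(N)}$ into a positive signed subset containing $e$, so it lies in $_{-A}\SC^\CC(N)$.

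If instead the second alternative holds, the dual part of condition (2) gives a $U \in \CC^*$ whose restriction is a cocircuit of $N$. Its support lies in $E(N)\cup G'$, so $U|_{E(N)} \in \SC^{\CC^*}(N)$, and $_{-A}(U|_{E(N)})$ is positive and contains $e$.

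\emph{At most one alternative.} Suppose $X \in {}_{-A}\SC^\CC(N)$ and $Y \in {}_{-A}\SC^{\CC^*}(N)$ are both positive and contain $e$. By definition, $X = {}_{-A}(C|_{E(N)})$ for some $C\in\CC$ with $\supp{C}\subseteq E(N)\cup F = B\cup W\cup G$. Positivity of $X$ means $C$ is positive on $B\cap\supp{C}$ and negative on $W\cap\supp{C}$, so $C$ witnesses the first alternative of (4P). Likewise $Y = {}_{-A}(U|_{E(N)})$ for some $U \in \CC^*$, and $U$ witnesses the second alternative. This contradicts the exclusivity part of (4P) in condition (1).

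The only point needing care is the bookkeeping of the definitions. In $\SC^\CC(N)$ the contracted set must play the role of $G$ and the deleted set the role of $R$, not the other way round. Moreover, the membership condition ``$\supp{C|_{E(N)}}$ is a circuit of $N$'' is exactly what condition (2) supplies; mere (4P) would not suffice. I do not expect any real obstacle beyond checking that these correspondences hold for arbitrary (possibly infinite) $F$, $G'$ and $A$. They do, since everything is pointwise on signs.
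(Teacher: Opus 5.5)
Your proof is correct and follows the paper's argument: the paper also applies (4P) to the 4-partition $(E(N)\setminus A)\,\dot{\cup}\,A\,\dot{\cup}\,G\,\dot{\cup}\,R$ and uses condition (2) to land in $\SC^\CC(N)$ or $\SC^{\CC^*}(N)$. Your explicit translation of positive elements of ${}_{-A}\SC^\CC(N)$ and ${}_{-A}\SC^{\CC^*}(N)$ back into witnesses of the two (4P) alternatives spells out the exclusivity step, which the paper only covers by the phrase ``exactly one of the alternatives of (4P) holds''.
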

\begin{proof}
  Let $N = M/G\!\setminus\!R$ be a minor of $M$, let $e \in E(N)$, and let $A \subseteq E(N)$. Then
  $(E(N)\setminus A) \dot{\cup} A \dot{\cup} G \dot{\cup} R$ is a 4-partition of $E$ and thus exactly
  one of the alternatives of (4P) holds for the pair $\CC, \CC^*$. If the first one holds, then
  according to property (2) there exists a $C \in \CC$ such that $_{-A}C|_{E(N)}$ is a positive circuit
  of $N$ and $e \in \supp{C|_{E(N)}}$. Otherwise there exists a $U \in \CC^*$ such that $_{-A}U|_{E(N)}$
  is a positive cocircuit of $N$ and $e \in \supp{U|_{E(N)}}$. Thus the pair
  $_{-A}\SC^\CC(N), _{-A}\SC^{\CC^*}(N)$ has the Farkas property (FP).
\end{proof}

\begin{rem}
  How strong the second condition of Lemma~\ref{lem:From4PToFPOrientations} must be is closely linked
  to the definition of FP-oriented matroids. See Remark~\ref{rem:AboutStrongCondDefFPOriMats} for a
  discussion about the possibility of weakening the definition of FP-oriented matroids and thus also
  the second condition of Lemma~\ref{lem:From4PToFPOrientations}.
\end{rem}

In fact, we do not know the answer to the following.

\begin{qstn}\label{qstn:4PImplFA}
  Is the second condition of Lemma~\ref{lem:From4PToFPOrientations} necessary or does the 4-painting
  property (4P) alone already imply the property (FA) for the pair $\CC, \CC^*$?
\end{qstn}

Our next goal in this section is to show that the 4-painting property (4P) implies orthogonality
and allows strong circuit elimination as well as conformal vector decomposition; we consider
orthogonality first.

\begin{lem}\label{lem:4PImplOrth}
  Let $\SC, \TC$ be a pair of sets of signed subsets of a set $E$. If $\SC, \TC$ has the painting
  property (4P) and $\TC$ is symmetric, then $X \perp Y$ for all $X \in \SC$ and $Y \in \TC$.
\end{lem}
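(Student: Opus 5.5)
We argue by contradiction, building from a non-orthogonal pair a single 4-partition for which both alternatives of (4P) hold at once. Suppose $X \in \SC$ and $Y \in \TC$ are not orthogonal. Then $\supp{X} \cap \supp{Y} \neq \emptyset$, and the restrictions of $X$ and $Y$ to $\supp{X} \cap \supp{Y}$ are either equal or opposite. Since $\TC$ is symmetric, we may replace $Y$ by $-Y$ if necessary. So we may assume $X(f) = Y(f)$ for all $f \in \supp{X} \cap \supp{Y}$. Fix some $e \in \supp{X} \cap \supp{Y}$.

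We define the 4-partition of $E$ as follows:
\[
  B = X^+ \cup Y^+, \quad W = X^- \cup Y^-, \quad G = \emptyset, \quad R = \overline{\supp{X} \cup \supp{Y}}.
\]
The sets $B$ and $W$ are disjoint, precisely because $X$ and $Y$ agree in sign on their common support. So $E = B \dot{\cup} W \dot{\cup} G \dot{\cup} R$ really is a 4-partition, and $e \in B \cup W$.

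The main point is to pick $G$ and $R$ so that $X$ witnesses the first alternative and $Y$ the second simultaneously. With $G = \emptyset$, the first alternative requires $\supp{X} \subseteq B \cup W$. This holds, since $B \cup W = \supp{X} \cup \supp{Y}$. The second alternative requires $\supp{Y} \subseteq B \cup W \cup R$, which holds trivially.

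It remains to check the sign conditions.
\begin{itemize*}
  \item For $X$: if $f \in \supp{X} \cap B$, then $f \notin X^-$, since otherwise $f \in W$. Hence $f \in X^+$, i.e.\ $\supp{X} \cap B \subseteq X^+$.
  \item Symmetrically for $X$: $\supp{X} \cap W \subseteq X^-$.
  \item The same two arguments applied to $Y$ give $\supp{Y} \cap B \subseteq Y^+$ and $\supp{Y} \cap W \subseteq Y^-$.
\end{itemize*}
Both $X$ and $Y$ contain $e$. Hence the first alternative of (4P) holds via $X \in \SC$, and the second holds via $Y \in \TC$. This contradicts the \emph{but not both} clause of (4P).

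Therefore $X \perp Y$ for all $X \in \SC$ and $Y \in \TC$. The symmetry of $\TC$ is used only in the normalization step, to turn the case of opposite restrictions into the case of equal restrictions.
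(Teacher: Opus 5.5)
Your proof is correct and follows the same idea as the paper: after using the symmetry of $\TC$ to make $X$ and $Y$ agree on $\supp{X} \cap \supp{Y}$, you exhibit a single 4-partition for which $X$ witnesses the first alternative of (4P) and $Y$ the second. The only difference is the partition. The paper takes $B = X^+ \cap Y^+$, $W = X^- \cap Y^-$, $G = \supp{X} \setminus \supp{Y}$ and $R$ the rest, while your choice $B = X^+ \cup Y^+$, $W = X^- \cup Y^-$, $G = \emptyset$ works equally well, with the sign agreement used exactly to make $B$ and $W$ disjoint.
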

\begin{proof}
  Let $X \in \SC, Y \in \TC$, and assume for a contradiction that $X$ and $Y$ are not orthogonal.
  By replacing $Y$ by $-Y$ if necessary, we may assume that $X(e) = Y(e)$ for all
  $e \in \supp{X} \cap \supp{Y}$. Choose
  $B = \{ f\ |\ f \in X^+ \cap Y^+ \}, W = \{ f\ |\ f \in X^- \cap Y^- \},
  G = \supp{X} \setminus (\supp{X} \cap \supp{Y}), R = E \setminus (B \cup W \cup G)$, and
  $e \in B \cup W \ne \emptyset$. Then both alternatives of (4P) hold, a contradiction. Thus
  $X \perp Y$ for all $X \in \SC$ and $Y \in \TC$
\end{proof}

\begin{cor}\label{cor:4PImplOrthOrient}
  Let $M$ be a matroid and $\CC, \CC^*$ be a pair of circuit and cocircuit signatures of $M$ that
  has the 4-painting property (4P). Then $(M, \CC, \CC^*)$ is an orthogonally oriented matroid.
\end{cor}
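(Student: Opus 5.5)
This follows essentially at once from Lemma~\ref{lem:4PImplOrth} applied with $\SC = \CC$ and $\TC = \CC^*$. The only hypothesis of that lemma beyond (4P) is that $\TC$ be symmetric. This holds for $\CC^*$: a cocircuit signature assigns to every cocircuit $\supp{U}$ both signings $U$ and $-U$, so $\CC^* = \{ -U\ |\ U \in \CC^* \}$.

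Lemma~\ref{lem:4PImplOrth} then gives $C \perp U$ for all $C \in \CC$ and $U \in \CC^*$. This is exactly condition (O) of Definition~\ref{defn:OrthOrient}, so $(M, \CC, \CC^*)$ is an orthogonally oriented matroid.

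One bookkeeping point needs checking. In Definition~\ref{defn:SS4PaintProp} the second alternative is phrased with ``$Y \in \TC^*$'', which should be read as $Y \in \TC$. The proof of Lemma~\ref{lem:4PImplOrth} uses it in that sense, and for the pair $\CC, \CC^*$ it means the second alternative ranges over signed cocircuits. With that reading no further obstacle arises; the statement is a direct specialization, and nothing about the underlying matroid $M$ (circuit axioms, minors, tameness) is needed.
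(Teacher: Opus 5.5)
Your proposal is correct and matches the paper, whose proof of this corollary consists solely of citing Lemma~\ref{lem:4PImplOrth}. Your explicit check that $\CC^*$ is symmetric, because a cocircuit signature gives each cocircuit both signings $U$ and $-U$, supplies the one hypothesis of that lemma that the paper leaves implicit, and your reading of $\TC^*$ as $\TC$ in Definition~\ref{defn:SS4PaintProp} is the reading the paper's own proofs use.
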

\begin{proof}
  Lemma~\ref{lem:4PImplOrth}.
\end{proof}

Pairs of circuit and cocircuit signatures of a matroid $M$ having the 4-painting property (4P) thus
induce circuit and cocircuit signatures on the minors of $M$, a fact that is a part of the proof of
the claim that the class of FP-oriented matroids is closed under taking minors.\\

Next, we show that the 4-painting property (4P) allows strong circuit elimination. Of course, in the
dual sense this means that strong cocircuit elimination is possible as well. Since we want to make
use of (4P), the key point in the proof of the following lemma is to consider suitable 4-partitions
of the ground set of a given matroid.

\begin{lem}\label{lem:4PImplSSCE}
  Let $M$ be a matroid and $\CC, \CC^*$ be a pair of circuit and cocircuit signatures of $M$ that
  has the 4-painting property (4P). Then both $\CC$ and $\CC^*$ have the strong circuit elimination
  property (CE).
\end{lem}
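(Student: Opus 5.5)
We prove that $\CC$ has (CE) by choosing a single 4-partition and applying (4P) to $f$. The statement for $\CC^*$ then follows by the symmetric argument, since (4P) is symmetric under swapping $\CC \leftrightarrow \CC^*$ and $G \leftrightarrow R$, using that $\CC$ is symmetric.

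Let $C$, $X$, $(C_x\ |\ x \in X)$ and $f \in \supp{C} \setminus \bigcup_{x \in X}\sep(C,C_x)$ be as in (CE). Put $A = \left( C^+ \cup \bigcup_{x \in X} C_x^+ \right) \setminus X$ and $A' = \left( C^- \cup \bigcup_{x \in X} C_x^- \right) \setminus X$. Define
\[
  B = A \setminus A', \qquad W = A' \setminus A, \qquad G = X \cup (A \cap A'), \qquad R = E \setminus (B \cup W \cup G).
\]
Note $f \notin X$: otherwise $f = x \in \sep(C,C_x)$. Also $f \notin \sep(C,C_x)$ for every $x$, so $f$ carries one sign in all of $C$ and the $C_x$ that contain it. Hence $f \in B \cup W$.

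If the first alternative of (4P) holds, we obtain $D \in \CC$ with $f \in \supp{D} \subseteq B \cup W \cup G$, positive on $B$ and negative on $W$. This is almost what (CE) requires. The defect is that $D$ may use elements of $X$, which lie in $G$. To prevent this, I would move $X$ into $R$ instead of $G$, using
\[
  G = A \cap A', \qquad R = E \setminus (A \cup A').
\]
Then $\supp{D} \subseteq A \cup A'$ avoids $X$. On $A \cap A'$ the sign of $D$ is unconstrained, and this is harmless because such elements lie in both $A$ and $A'$. So $D$ has exactly the required properties.

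It remains to exclude the second alternative. Suppose $U \in \CC^*$ satisfies $f \in \supp{U} \subseteq B \cup W \cup R$, with $U$ positive on $B$ and negative on $W$. Every element of $\supp{U} \cap (\supp{C} \cup \bigcup_x \supp{C_x})$ then lies in $B \cup W \cup X$, since $R$ meets this union only in $X$. By Corollary~\ref{cor:4PImplOrthOrient}, $U$ is orthogonal to $C$ and to every $C_x$.

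Restricted to $\supp{C} \setminus X$, the circuit $C$ agrees with $U$, because $B \cup W$ consists of elements whose sign is the same across all circuits containing them. Since $C(f)U(f) = 1$ and $C \perp U$, there is some $x \in X \cap \supp{U}$ with $C(x)U(x) = -1$. Consider $C_x$: its support meets $X$ only in $x$, and off $X$ it agrees in sign with $U$ on $\supp{U}$. Since $x \in \sep(C,C_x)$, we get $C_x(x)U(x) = +1$. Thus $C_x$ agrees with $U$ on all of $\supp{C_x} \cap \supp{U}$, which is nonempty because it contains $x$. This contradicts $C_x \perp U$. Hence the second alternative fails, and (4P) forces the first.

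The main obstacle is the bookkeeping for elements in $A \cap A'$ and ensuring $f \in B \cup W$; both follow from the hypothesis $f \notin \bigcup_x \sep(C,C_x)$. A secondary point to check is that the orthogonality argument uses only $\supp{C_x} \cap X = \{x\}$.
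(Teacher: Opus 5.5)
Your proof is correct and is essentially the paper's argument. Once you move $X$ into $R$, your partition $B = A\setminus A'$, $W = A'\setminus A$, $G = A\cap A'$, $R = E\setminus(A\cup A')$ coincides with the paper's, since $A\cap A'$ is exactly $\bigl(\bigcup_{x}\sep(C,C_x)\cup\bigcup_{x,y}\sep(C_x,C_y)\bigr)\setminus X$; the exclusion of the second alternative via orthogonality of $U$ with $C$ and then with the offending $C_x$ is the same, and you additionally justify $f\in B\cup W$, which the paper leaves implicit.
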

\begin{proof}
  Let $C \in \CC, X \subseteq \supp{C}$, and $(C_x\ |\ x \in X)$ be
  a family of elements of $\CC$ such that $\supp{C_x} \cap X = \{x\}$ and
  $x \in \sep(C, C_x)$ for all $x \in X$, and let
  $f \in \supp{C} \setminus \left( \bigcup_{x \in X} \sep(C, C_x) \right)$. Choose
  \[
    G = \left( \bigcup\limits_{x \in X} \sep(C,C_x)
               \cup \bigcup\limits_{x,y \in X} \sep(C_x,C_y) \right) \setminus X,
  \]
  \[
    B = \left( C^+ \cup \bigcup\limits_{x \in X} C_x^+ \right) \setminus (G \dot{\cup} X),
  \]
  \[
    W = \left( C^- \cup \bigcup\limits_{x \in X} C_x^- \right) \setminus (G \dot{\cup} X),
  \]
  \[
    R = \left( E \setminus \left( \supp{C} \cup \bigcup_{x \in X} \supp{C_x} \right) \right)
        \dot{\cup} X.
  \]
  Then $E = B \dot{\cup} W \dot{\cup} G \dot{\cup} R$ and $f \in B \cup W$.
  Assume for a contradiction that the second alternative of (4P) holds, i.e.\ that there is a
  $U \in \CC^*$ such that $f \in \supp{U} \subseteq B \cup W \cup R,
  \supp{U} \cap B \subseteq U^+$, and $\supp{U} \cap W \subseteq U^-$. Then
  $\supp{C} \cap \supp{U} \subseteq B \cup W \cup X$ and
  $\supp{C_x} \cap \supp{U} \subseteq B \cup W \cup x$ for all $x \in X$. By Lemma~\ref{lem:4PImplOrth}
  we have $C \perp U$ and $C_x \perp U$ for all $x \in X$. Since additionally $C(e) = U(e)$
  holds for all $e \in (B \cup W) \cap \supp{C}$, there must exist an $x \in X$ such that $x \in \supp{U}$
  and $C(x) = -U(x) = -C_x(x)$. But this implies $C_x(e) = U(e)$ for all $e \in \supp{C_x} \cap \supp{U}$
  in contradiction to $C_x \perp U$. Thus the first alternative of (4P) must hold, i.e.\ there is a
  $D \in \CC$ such that $f \in \supp{D} \subseteq B \cup W \cup G, \supp{D} \cap B \subseteq D^+$, and
  $\supp{D} \cap W \subseteq D^-$. This $D$ then satisfies the conditions of the property (CE). Finally,
  by dualizing this argument it follows that $\CC^*$ has the property (CE) as well.
\end{proof}

Finally, we address the claim that decomposing vectors/covectors into conforming circuits/cocircuits is
possible when the 4-painting property (4P) is available.

\begin{prop}\label{prop:4PImplConfDecompOfOrthSS}
  Let $M$ be a matroid that is equipped with a pair of circuit and cocircuit signatures that
  has the 4-painting property (4P). Let $X \subseteq \TPME$ such that $X$ is orthogonal to every
  cocircuit of $M$. Then there exists a decomposition of $X$ into signed circuits of $M$ conforming
  to $X$.
\end{prop}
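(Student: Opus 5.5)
The plan is to apply (4P) once per element $e \in \supp{X}$, using the 4-partition that encodes the signs of $X$, and then compose the resulting circuits. Put
\[
  B = X^+, \qquad W = X^-, \qquad G = \emptyset, \qquad R = E \setminus \supp{X}.
\]
Fix $e \in \supp{X} = B \cup W$. By (4P), either there is a signed circuit $C$ with $e \in \supp{C} \subseteq \supp{X}$, $\supp{C} \cap B \subseteq C^+$ and $\supp{C} \cap W \subseteq C^-$, or there is a signed cocircuit $U$ with $e \in \supp{U}$, $\supp{U} \cap B \subseteq U^+$ and $\supp{U} \cap W \subseteq U^-$. The first alternative says exactly that $C$ conforms to $X$ and contains $e$.

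The second alternative must be excluded. In that case $\supp{U} \cap \supp{X} \subseteq B \cup W$, and $U$ agrees with $X$ on every element of this intersection. The intersection is non-empty because it contains $e$. So $X$ and $U$ are not orthogonal, which contradicts the hypothesis that $X$ is orthogonal to every cocircuit. (If the cocircuit signature is symmetric, the orthogonality hypothesis is needed only for $U$ itself.) Hence for every $e \in \supp{X}$ we may choose a signed circuit $C_e$ conforming to $X$ with $e \in \supp{C_e}$.

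Let $\VC = \{ C_e \mid e \in \supp{X} \}$. Fix an arbitrary well-ordering of $\VC$ by the well-ordering theorem, and let $W'$ be the composition of $\VC$ according to it, as in Definition~\ref{defn:InfSSComp}.
\begin{itemize*}
  \item Support: $\supp{W'} = \bigcup_e \supp{C_e} = \supp{X}$, since every $\supp{C_e} \subseteq \supp{X}$ and each $e$ is covered by $C_e$.
  \item Signs: for $f \in \supp{W'}$, the sign $W'(f)$ equals $V(f)$ for the first $V \in \VC$ containing $f$. Since every member of $\VC$ conforms to $X$, this gives $W'(f) = X(f)$.
\end{itemize*}
Thus $W' = X$, and $\VC$ is a decomposition of $X$ into signed circuits conforming to $X$. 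In fact the order does not matter, because all members of $\VC$ conform to $X$ and are therefore pairwise sign-consistent.

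The main point needing care is the exclusion of the second alternative. One has to read off correctly that the conditions $\supp{U} \subseteq B \cup W \cup R$ and the sign conditions on $B$ and $W$ produce a non-empty intersection $\supp{U} \cap \supp{X}$ on which $U$ and $X$ agree, and that this contradicts orthogonality (Definition~\ref{defn:OrthSS}) rather than merely failing to use it. The remaining steps, the choice of one circuit per element and the well-ordered composition, are routine; if $X$ is empty the statement holds trivially with the empty decomposition.
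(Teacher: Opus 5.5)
Your proof is correct and follows the paper's argument. Both apply (4P) once for each $e \in \supp{X}$, with $G = \emptyset$ and $R = E \setminus \supp{X}$, rule out the cocircuit alternative by orthogonality, and so obtain circuits $C_e$ conforming to $X$. The differences are cosmetic: the paper first reorients so that $X$ is positive and takes $W = \emptyset$, whereas you set $B = X^+$, $W = X^-$ directly (which spares you checking that (4P) survives reorientation), and you write out the final well-ordered composition that the paper leaves implicit.
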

\begin{proof}
  By using reorientation we may assume that $X$ is positive. Let
  $E = B \dot{\cup} W \dot{\cup} G \dot{\cup} R$ be a 4-partition of $E$ where
  $B = \supp{X}, W = G = \emptyset$, and $R = E \setminus B$. Let $e \in \supp{X}$. Then, according
  to Proposition~\ref{prop:OrthToAllElemThenOrthToComp}, the second alternative of (4P)
  cannot hold since every signed cocircuit of $M$ is orthogonal to $X$. Thus, for every $e \in \supp{X}$
  there is a $C_e \in \CC$ such that $e \in \supp{C_e}$ and $C_e$ conforms to $X$.
\end{proof}

\begin{cor}\label{prop:4PImplConfDecomp}
  Let $M$ be a matroid that is equipped with a pair of circuit and cocircuit signatures that
  has the 4-painting property (4P) and let $V$ be a vector of $M$. Then there exists a decomposition
  of $V$ into signed circuits of $M$ conforming to $V$.
\end{cor}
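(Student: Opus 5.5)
The plan is to reduce the statement to Proposition~\ref{prop:4PImplConfDecompOfOrthSS}. That proposition already produces a conformal decomposition for every signed subset that is orthogonal to all signed cocircuits of $M$, so the only thing to verify is that an arbitrary vector $V$ of $M$ has this orthogonality property.

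First, write $V$ as a composition of a set $\VC$ of signed circuits of $M$ according to some well-ordering $(\VC,<)$, as in Definition~\ref{defn:VectorsOfMats}. By Corollary~\ref{cor:4PImplOrthOrient}, the (4P) hypothesis on the pair $\CC,\CC^*$ makes $(M,\CC,\CC^*)$ an orthogonally oriented matroid. In particular every $C\in\VC$ satisfies $C\perp U$ for every $U\in\CC^*$.

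Next, fix a signed cocircuit $U\in\CC^*$. Since $U$ is orthogonal to every member of $\VC$, Proposition~\ref{prop:OrthToAllElemThenOrthToComp} shows that $U$ is orthogonal to every composition of the elements of $\VC$, and hence to $V$. Equivalently, this is the vector half of Corollary~\ref{cor:OrthOfVecsAndCovecsOfOrthOriMat}, applied with covector $U$. So $V$ is orthogonal to every signed cocircuit of $M$.

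Finally, apply Proposition~\ref{prop:4PImplConfDecompOfOrthSS} with $X=V$. It gives, for each $e\in\supp{V}$, a signed circuit $C_e$ with $e\in\supp{C_e}$ that conforms to $V$. I do not expect a main obstacle, but one point should be made explicit: a family of signed circuits, all conforming to $V$ and with supports covering $\supp{V}$, composes to $V$ under any well-ordering, because all members agree with $V$ wherever they are nonzero. Thus $\{C_e \mid e\in\supp{V}\}$, taken with an arbitrary well-ordering, is a decomposition of $V$ into signed circuits conforming to $V$ in the sense of Definition~\ref{defn:InfSSComp}. This closing observation is also what justifies calling the output of Proposition~\ref{prop:4PImplConfDecompOfOrthSS} a decomposition.
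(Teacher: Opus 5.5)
Your proposal is correct and follows the paper's proof. The paper uses the same three ingredients: (4P) gives an orthogonal orientation by Corollary~\ref{cor:4PImplOrthOrient}; vectors are then orthogonal to all signed cocircuits by Corollary~\ref{cor:OrthOfVecsAndCovecsOfOrthOriMat}, which is Proposition~\ref{prop:OrthToAllElemThenOrthToComp}; and Proposition~\ref{prop:4PImplConfDecompOfOrthSS} finishes. Your closing check, that conforming circuits covering $\supp{V}$ compose to $V$ under any well-ordering, is a useful step the paper leaves implicit.
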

\begin{proof}
  Follows from Corollaries~\ref{cor:OrthOfVecsAndCovecsOfOrthOriMat} and \ref{cor:4PImplOrthOrient}
  as well as Proposition~\ref{prop:4PImplConfDecompOfOrthSS}.
\end{proof}

To conclude this section, note that we have just shown the following result.

\begin{cor}
  Let $M$ be a matroid that is equipped with a pair of circuit and cocircuit signatures that
  has the 4-painting property (4P) and let $X \subseteq \TPME$. Then the following are equivalent:
  \begin{enumerate*}
    \item $X$ is orthogonal to every signed cocircuit of $M$.
    \item $X$ is a vector of $M$.
  \end{enumerate*}
\end{cor}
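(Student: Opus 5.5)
The plan is to prove the two implications separately, with (2) $\Rightarrow$ (1) essentially already done. For (2) $\Rightarrow$ (1): if $X$ is a vector of $M$, it is a composition of signed circuits of $M$. By Corollary~\ref{cor:4PImplOrthOrient}, the pair $\CC,\CC^*$ provides an orthogonal orientation, so every signed circuit is orthogonal to every signed cocircuit. Proposition~\ref{prop:OrthToAllElemThenOrthToComp} (equivalently Corollary~\ref{cor:OrthOfVecsAndCovecsOfOrthOriMat}) then shows that $X$ is orthogonal to every signed cocircuit.

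For (1) $\Rightarrow$ (2), I would invoke Proposition~\ref{prop:4PImplConfDecompOfOrthSS}. It gives, for each $e\in\supp{X}$, a signed circuit $C_e\in\CC$ with $e\in\supp{C_e}$ and $C_e$ conforming to $X$. (The case $\supp{X}=\emptyset$ is the empty composition and can be treated separately or excluded by convention.) Put $\VC=\{C_e\mid e\in\supp{X}\}$ and choose any well-ordering of $\VC$, which exists by the well-ordering theorem. Let $W$ be the composition of the signed subsets of $\VC$ according to this ordering, as in Definition~\ref{defn:InfSSComp}.

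It remains to check that $W=X$. For the supports: $\supp{W}=\bigcup_e\supp{C_e}$. This contains every $e\in\supp{X}$, and it is contained in $\supp{X}$ because each $C_e$ conforms to $X$. For the signs: for any $f\in\supp{W}$, the first circuit $V^\gamma$ in the order with $f\in\supp{V^\gamma}$ conforms to $X$. Hence $W(f)=V^\gamma(f)=X(f)$, and in fact every circuit containing $f$ agrees with $X$ at $f$, so the choice of well-ordering is irrelevant. Therefore $W=X$, and $X$ is a vector of $M$.

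The only point needing care is to use conformality to see that composition reproduces $X$ regardless of ordering. I expect no real obstacle, since the substantive work, namely ruling out the second alternative of (4P), is already contained in Proposition~\ref{prop:4PImplConfDecompOfOrthSS}.
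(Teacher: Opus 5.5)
Your proof is correct and follows the paper's argument. The direction (2)$\Rightarrow$(1) uses Corollary~\ref{cor:4PImplOrthOrient} together with Proposition~\ref{prop:OrthToAllElemThenOrthToComp}, and (1)$\Rightarrow$(2) uses Proposition~\ref{prop:4PImplConfDecompOfOrthSS}; your explicit check that composing the conforming circuits $C_e$ in any well-ordering reproduces $X$, and your remark on the empty-support case, fill in a step the paper leaves implicit in that proposition.
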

\begin{proof}
  $(1) \Rightarrow (2)$ follows from Proposition~\ref{prop:4PImplConfDecompOfOrthSS}, while
  $(2) \Rightarrow (1)$ follows from Corollaries~\ref{cor:OrthOfVecsAndCovecsOfOrthOriMat} and
  \ref{cor:4PImplOrthOrient}.
\end{proof}

\subsection{Minors and Properties of FP-Oriented Matroids}\label{ssec:MinorsAndPropsFPOriMats}
Just like in the case of orthogonally oriented matroids, we note that an immediate consequence of
Definition~\ref{defn:FPAxiom} is that the dual matroid of an FP-orientable matroid is an
FP-orientable matroid as well.\\

We now show that the class of FP-oriented matroids is closed under taking minors. To do so, we
start by showing that the class of FP-oriented matroids is a subclass of the class of orthogonally
oriented matroids. This follows immediately from the results of Section~\ref{ssec:PPForSSS}.

\begin{cor}\label{cor:FPMatsSubclassOrthMats}
  Let $(M, \CC, \CC^*)$ be a Farkas property oriented matroid. Then $(M, \CC, \CC^*)$ is an
  orthogonally oriented matroid as well, i.e.\ the class of FP-oriented matroids is a subclass
  of the class of orthogonally oriented matroids.
\end{cor}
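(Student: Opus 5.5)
The plan is to chain two results already established in Section~\ref{ssec:PPForSSS}. First, I will apply Lemma~\ref{lem:FPOrientationsImpl4P} to the Farkas property oriented matroid $(M, \CC, \CC^*)$; it gives directly that the pair $\CC, \CC^*$ has the 4-painting property (4P). Second, I will feed this into Corollary~\ref{cor:4PImplOrthOrient}, which says that a pair of circuit and cocircuit signatures of $M$ with (4P) makes $(M, \CC, \CC^*)$ an orthogonally oriented matroid, i.e.\ (O) holds.

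The only point to check is the hypothesis of Lemma~\ref{lem:4PImplOrth}, on which Corollary~\ref{cor:4PImplOrthOrient} rests: the second set $\TC$ must be symmetric. Here $\TC = \CC^*$ is a cocircuit signature. By definition such a signature assigns to each cocircuit both signings $U$ and $-U$, so $\CC^*$ is symmetric automatically and nothing further is required. It is also worth noting that the sign conventions match: Lemma~\ref{lem:4PImplOrth} produces $X \perp Y$ for every $X \in \CC$ and $Y \in \CC^*$, which is exactly condition (O) of Definition~\ref{defn:OrthOrient}.

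I do not expect a real obstacle, since the statement is a direct composition of the two cited results. If anything deserves a second look, it is that Lemma~\ref{lem:FPOrientationsImpl4P} really uses (FA) for the minors $M/G\!\setminus\!R$ and for $M$ itself with reorientations. That is already contained in its proof and need not be redone here. Finally, the statement about classes follows at once: every FP-orientable matroid carries a pair of signatures that is an orthogonal orientation, so it is orthogonally orientable.
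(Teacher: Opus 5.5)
Your proposal is correct and matches the paper's proof, which likewise just chains Lemma~\ref{lem:FPOrientationsImpl4P} with Corollary~\ref{cor:4PImplOrthOrient}. Your check that $\CC^*$ is symmetric, as Lemma~\ref{lem:4PImplOrth} requires, is a correct and worthwhile detail that the paper leaves implicit.
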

\begin{proof}
  Lemma~\ref{lem:FPOrientationsImpl4P} and Corollary~\ref{cor:4PImplOrthOrient}.
\end{proof}

The proof of the claim that the class of FP-oriented matroids is closed under taking minors
relies on the fact that circuit and cocircuit signatures of FP-oriented matroids induce circuit
and cocircuit signatures on minors.

\begin{cor}\label{cor:FPSignIndMinSign}
  Let $(M, \CC, \CC^*)$ be a Farkas property oriented matroid and let $N$ be a minor of $M$. Then
  $\SC^\CC(N), \SC^{\CC^*}(N)$ is a pair of circuit and cocircuit signatures of $N$.
\end{cor}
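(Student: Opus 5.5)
The plan is to reduce the statement to Corollary~\ref{cor:FPMatsSubclassOrthMats} together with Lemma~\ref{lem:OrthImplUniSigInh}, as already hinted at in Remark~\ref{rem:SSIndByCircSig}(2). Write $N = M/F\!\setminus\!G$ with $E = E(N) \dot{\cup} F \dot{\cup} G$. By Corollary~\ref{cor:FPMatsSubclassOrthMats}, $(M, \CC, \CC^*)$ is an orthogonally oriented matroid. Hence Lemma~\ref{lem:OrthImplUniSigInh} applies to the minor $N$ and yields induced signatures $\CC_N$ and $\CC^*_N$. The task is then to show that $\SC^\CC(N) = \CC_N$ and $\SC^{\CC^*}(N) = \CC^*_N$ as sets of signed subsets.

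For $\SC^\CC(N)$ I would check two things.

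\textbf{Every support occurs.} By Remark~\ref{rem:SSIndByCircSig}(1), i.e.\ Lemma~\ref{lem:StructCircOfMinors}, every circuit $\supp{C'}$ of $N$ has some $C \in \CC$ with $\supp{C'} \subseteq \supp{C} \subseteq \supp{C'} \cup F$. Then $C|_{E(N)}$ has support $\supp{C'}$, so it lies in $\SC^\CC(N)$. Since $\CC$ is symmetric, $-C$ is in $\CC$ as well, and so both $C|_{E(N)}$ and $-C|_{E(N)}$ lie in $\SC^\CC(N)$.

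\textbf{No other signing occurs.} Take any element $C|_{E(N)}$ of $\SC^\CC(N)$. By definition $\supp{C} \subseteq E(N) \cup F$ and $\supp{C|_{E(N)}} = \supp{C} \cap E(N)$ is a circuit $\supp{C'}$ of $N$. So $\supp{C'} \subseteq \supp{C} \subseteq \supp{C'} \cup F$, which is exactly the hypothesis of Lemma~\ref{lem:OrthImplUniSigInh}(1). That lemma's uniqueness argument therefore gives $C|_{E(N)} \in \{C', -C'\}$, where $\pm C'$ is the induced signing.

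Together these show that $\SC^\CC(N)$ assigns to each circuit of $N$ exactly the two opposite signed sets $\pm C'$. This is precisely a circuit signature of $N$.

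The cocircuit side is the same argument with $G$ in place of $F$. It uses Lemma~\ref{lem:StructCircOfMinors} applied to $M^*$, using $N^* = M^*/G\!\setminus\!F$, together with part (2) of Lemma~\ref{lem:OrthImplUniSigInh}.

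The only point needing care is the uniqueness step, since the FP axiom itself is not used directly here. Its content, that two lifts $C$ and $D$ cannot restrict to non-opposite distinct signings, is exactly the contradiction already derived in the proof of Lemma~\ref{lem:OrthImplUniSigInh}. That argument takes a cocircuit of $N$ meeting $\supp{C'}$ in $\{e,f\}$ via Lemma~\ref{lem:2ElemCircCocircInters}, lifts it to $M$, and invokes orthogonality. I therefore expect no real obstacle beyond verifying that the definition of $\SC^\CC(N)$ matches the lemma's hypotheses, which it does verbatim.
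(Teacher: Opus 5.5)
Your proposal is correct and follows the paper's route: the paper's proof simply cites Corollary~\ref{cor:FPMatsSubclassOrthMats} (FP-oriented implies orthogonally oriented) together with Remark~\ref{rem:SSIndByCircSig}, and that remark rests on Lemma~\ref{lem:StructCircOfMinors} and Lemma~\ref{lem:OrthImplUniSigInh}. You have just written out those two steps: that every circuit of $N$ occurs as a support, and that uniqueness holds up to sign. The dual argument via $N^* = M^*/G\!\setminus\!F$ handles the cocircuits.
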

\begin{proof}
  Corollary~\ref{cor:FPMatsSubclassOrthMats} and Remark~\ref{rem:SSIndByCircSig}.
\end{proof}

Now we are ready to show that the class of FP-oriented matroids is closed under taking minors.

\begin{thm}\label{thm:MinorsFPAxiomsOMsAreFPAxiomsOMs}
  Let $(M, \CC, \CC^*)$ be a Farkas property oriented matroid and let $N$ be a minor of $M$.
  Then $(N, \SC^\CC(N), \SC^{\CC^*}(N))$ is a Farkas property oriented matroid. The class
  of FP-oriented matroids is thus closed under taking minors.
\end{thm}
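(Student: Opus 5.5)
The plan is to reduce condition (FA) for $N$ directly to condition (FA) for $M$. Every minor of $N$ is itself a minor of $M$, and I expect the signed subsets that $N$ induces on its minors to coincide with those that $M$ induces. Write $N = M/F\!\setminus\!G$ and let $N' = N/F'\!\setminus\!G'$ be an arbitrary minor of $N$, with $F', G' \subseteq E(N)$ disjoint. Then $N' = M/(F \cup F')\!\setminus\!(G \cup G')$ is a minor of $M$ with $E(N') = E(N) \setminus (F' \cup G')$. By Corollary~\ref{cor:FPSignIndMinSign}, $\CC_N := \SC^\CC(N)$ and $\CC^*_N := \SC^{\CC^*}(N)$ form a pair of circuit and cocircuit signatures of $N$, so the triple $(N, \CC_N, \CC^*_N)$ makes sense. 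The key claim is
\[
  \SC^{\CC_N}(N') = \SC^{\CC}(N') \quad\text{and}\quad \SC^{\CC^*_N}(N') = \SC^{\CC^*}(N'),
\]
where the left-hand sides are computed in $N$ and the right-hand sides in $M$. Once this is established, for every $A \subseteq E(N')$ the pair ${}_{-A}\SC^{\CC_N}(N'), {}_{-A}\SC^{\CC^*_N}(N')$ coincides with ${}_{-A}\SC^{\CC}(N'), {}_{-A}\SC^{\CC^*}(N')$. This pair has (FP) by (FA) for $M$, which is exactly (FA) for $N$.

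For the inclusion ``$\subseteq$'' I unwind Definition~\ref{defn:SSIndByCircSig} twice. An element of $\SC^{\CC_N}(N')$ has the form $D|_{E(N')}$, where $D \in \CC_N$, $\supp{D} \subseteq E(N') \cup F'$, and $\supp{D|_{E(N')}}$ is a circuit of $N'$. In turn $D = C|_{E(N)}$ for some $C \in \CC$ with $\supp{C} \subseteq E(N) \cup F$. It follows that $\supp{C} \subseteq E(N') \cup F' \cup F$ and $C|_{E(N')} = D|_{E(N')}$. Hence this element lies in $\SC^\CC(N')$.

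The reverse inclusion is the main obstacle. Take $C \in \CC$ with $\supp{C} \subseteq E(N') \cup F \cup F'$ such that $\supp{C|_{E(N')}}$ is a circuit of $N'$. The problem is that $\supp{C|_{E(N)}}$ need not be a circuit of $N$, so $C$ cannot be used directly. I would instead proceed as follows.

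\begin{enumerate*}
  \item Apply Lemma~\ref{lem:StructCircOfMinors} to $N$ and its minor $N'$. This gives a circuit $\supp{D_0}$ of $N$ with $\supp{C|_{E(N')}} \subseteq \supp{D_0} \subseteq \supp{C|_{E(N')}} \cup F'$.
  \item By Remark~\ref{rem:SSIndByCircSig}(1), $\supp{D_0}$ is signed in $\SC^\CC(N)$ as $D_0 = C_0|_{E(N)}$ for some $C_0 \in \CC$ with $\supp{C_0} \subseteq E(N) \cup F$. Then $D_0|_{E(N')} \in \SC^{\CC_N}(N')$.
  \item Both $C$ and $C_0$ are signed circuits of $M$ whose supports lie between the circuit $\supp{C|_{E(N')}}$ of $N'$ and that circuit together with $F \cup F'$.
  \item By Corollary~\ref{cor:FPMatsSubclassOrthMats}, $(M, \CC, \CC^*)$ is orthogonally oriented. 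Lemma~\ref{lem:OrthImplUniSigInh}(1), applied to the minor $N'$ of $M$, therefore gives $C|_{E(N')} = \pm C_0|_{E(N')} = \pm D_0|_{E(N')}$.
  \item Since $\CC_N$ is symmetric (it is a circuit signature), $-D_0 \in \CC_N$ as well. Hence $C|_{E(N')} \in \SC^{\CC_N}(N')$.
\end{enumerate*}

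The cocircuit equality follows by the dual argument, with the roles of $F, F'$ and $G, G'$ interchanged, using Lemma~\ref{lem:OrthImplUniSigInh}(2) applied to $M^*$.
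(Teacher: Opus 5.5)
Your proposal is correct and follows the paper's proof. Both reduce (FA) for $N$ to (FA) for $M$ via the identities $\SC^{\CC_N}(N') = \SC^\CC(N')$ and $\SC^{\CC^*_N}(N') = \SC^{\CC^*}(N')$; the paper simply attributes these to Corollary~\ref{cor:FPMatsSubclassOrthMats}, while you check them explicitly with Lemma~\ref{lem:StructCircOfMinors} and the uniqueness part of Lemma~\ref{lem:OrthImplUniSigInh} (minor slip: in the cocircuit case that is part (2) applied to $M$ itself, or equivalently part (1) applied to $M^*$).
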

\begin{proof}
  Denote $\SC^\CC(N)$ by $\CC_N$ and $\SC^{\CC^*}(N)$ by $\CC^*_N$, respectively. According to
  Corollary~\ref{cor:FPSignIndMinSign}, the pair $\CC_N, \CC^*_N$ is a pair of circuit and cocircuit
  signatures of $N$. Now, let $N'$ be a minor of $N$ (and thus of $M$). By
  Corollary~\ref{cor:FPMatsSubclassOrthMats}, the identities $\SC^{\CC_N}(N') = \SC^\CC(N')$ and
  $\SC^{\CC^*_N}(N') = \SC^{\CC^*}(N')$ hold. Thus the pair $_{-A}\SC^\CC(N'), _{-A}\SC^{\CC^*}(N')$
  has the Farkas property (FP) for all $A \subseteq E$.
\end{proof}

Like in the case of orthogonally oriented matroids, given a Farkas property oriented matroid
$\MC = (M, \CC, \CC^*)$ on a set $E$ and a subset $X \subseteq E$, it is thus sensible to speak of
the \textbf{FP-oriented restriction minor $\bm{\MC|X}$}, the \textbf{FP-oriented deletion minor
$\bm{\MC\!\setminus\!\overline{X}}$}, and the \textbf{FP-oriented contraction minor $\bm{\MC.X}$
or $\bm{\MC/\overline{X}}$ of $\bm{\MC}$}. Again, for any minor $N$ of $M$, we denote the circuit
and cocircuit signatures induced by $\CC$ and $\CC^*$ on $N$ by $\CC_N$ and $\CC^*_N$, respectively.
Finally, note that the alternative characterization of these minors given in
Subsection~\ref{ssec:MinorsAndPropsOrthOriMats} also applies in this case.\\

Next, we show that two well known operations common to all finite oriented matroids can be performed
over FP-oriented matroids as well, namely strong circuit elimination and conformal vector/covector
decomposition. Due to our findings in Section~\ref{ssec:PPForSSS} that the 4-painting property (4P)
already enables both operations, these two claims also follow immediately.

\begin{cor}\label{cor:FPOrientationsImplSSCE}
  Let $(M, \CC, \CC^*)$ be a Farkas property oriented matroid. Then both $\CC$ and $\CC^*$ have the
  strong circuit elimination property (CE).
\end{cor}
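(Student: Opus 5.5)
This follows by chaining two earlier results, with no new combinatorial argument. First, since $(M, \CC, \CC^*)$ is a Farkas property oriented matroid, Lemma~\ref{lem:FPOrientationsImpl4P} shows that the pair $\CC, \CC^*$ has the 4-painting property (4P). Second, Lemma~\ref{lem:4PImplSSCE} states that whenever a pair of circuit and cocircuit signatures of a matroid has (4P), both $\CC$ and $\CC^*$ have the strong circuit elimination property (CE). Composing the two gives the claim for $\CC$ directly.

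For $\CC^*$ there are two routes, and I would mention both for robustness. The first is the dual part of Lemma~\ref{lem:4PImplSSCE}, which is obtained by exchanging the roles of $\CC$ and $\CC^*$. In the 4-partition used there, the roles of $G$ and $R$ are swapped and the cocircuits $U, U_x$ replace $C, C_x$. Orthogonality via Lemma~\ref{lem:4PImplOrth} is then applied with $\CC$ symmetric, which holds because a circuit signature always contains both $C$ and $-C$. The second route observes that Definition~\ref{defn:FPAxiom} is self-dual. A minor $M/F\!\setminus\!G$ corresponds to the minor $M^*/G\!\setminus\!F$ of the dual, and the sets $\SC^\CC(N)$ and $\SC^{\CC^*}(N)$ simply exchange roles, while (FP) is symmetric in its two arguments. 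Hence $(M^*, \CC^*, \CC)$ is again an FP-oriented matroid, and the circuit case applied to it gives (CE) for $\CC^*$.

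The only point requiring care is this dualisation. One has to confirm that the symmetry hypothesis in Lemma~\ref{lem:4PImplOrth} is met in the dual application, and that the sign conventions in (4P), namely $B$ positive and $W$ negative, are untouched when $G$ and $R$ are swapped. Both checks are immediate from the definitions, so I expect no real obstacle.
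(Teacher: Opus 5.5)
Your proposal is correct and is exactly the paper's proof, which simply chains Lemma~\ref{lem:FPOrientationsImpl4P} and Lemma~\ref{lem:4PImplSSCE}. Your extra discussion of the dual case is sound but not needed: Lemma~\ref{lem:4PImplSSCE} already asserts (CE) for both $\CC$ and $\CC^*$, and its proof ends by dualizing the argument, just as in your first route.
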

\begin{proof}
  Lemmas~\ref{lem:FPOrientationsImpl4P} and \ref{lem:4PImplSSCE}.
\end{proof}

\begin{cor}\label{cor:FPOriMatsAllowConfDecomp}
  Let $\MC$ be an FP-oriented matroid and let $V$ vector of $\MC$. Then there exists a
  decomposition of $V$ into signed circuits of $\MC$ conforming to $V$.
\end{cor}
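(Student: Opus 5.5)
This is a direct consequence of results already established in Section~\ref{ssec:PPForSSS}. The plan is to reduce the statement to Corollary~\ref{prop:4PImplConfDecomp}, which gives exactly this conclusion for any matroid whose circuit and cocircuit signatures satisfy the 4-painting property (4P). So it suffices to check that the pair of signatures of an FP-oriented matroid has (4P).

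Concretely, write $\MC = (M, \CC, \CC^*)$. By Definition~\ref{defn:FPAxiom}, the vectors of $\MC$ are the vectors of $M$ with respect to $\CC$, that is, compositions of signed circuits in $\CC$ in the sense of Definition~\ref{defn:InfSSComp}. Lemma~\ref{lem:FPOrientationsImpl4P} shows that $\CC, \CC^*$ has (4P). Applying Corollary~\ref{prop:4PImplConfDecomp} to $M$ with this pair of signatures and to the vector $V$ then yields a set of signed circuits conforming to $V$ whose composition is $V$.

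It is worth recalling briefly what happens inside that corollary, since this is where any subtlety lies. First, $V$ is orthogonal to every signed cocircuit, by Corollary~\ref{cor:4PImplOrthOrient} and Proposition~\ref{prop:OrthToAllElemThenOrthToComp}. Then, for each $e \in \supp{V}$, Proposition~\ref{prop:4PImplConfDecompOfOrthSS} provides a conforming $C_e$ containing $e$, using the partition $B = V^+$, $W = V^-$, $G = \emptyset$, $R = \overline{\supp{V}}$ (equivalently, after reorientation, $V$ positive). Since each $C_e$ conforms to $V$ and the $C_e$ cover $\supp{V}$, any well-ordering of $\{C_e\}$ gives $V$ as their composition. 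No conflicting signs can arise, because all pieces agree with $V$ on their supports.

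I do not expect a genuine obstacle here. The only point to verify is that the notion of ``vector of $\MC$'' in Definition~\ref{defn:FPAxiom} coincides with the notion of vector used in Corollary~\ref{prop:4PImplConfDecomp}, and it does by definition.
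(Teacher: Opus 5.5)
Your proposal is correct and matches the paper's proof exactly: Lemma~\ref{lem:FPOrientationsImpl4P} gives (4P) for the pair $\CC, \CC^*$, and Corollary~\ref{prop:4PImplConfDecomp} then yields the conforming decomposition. Your unpacking of that corollary is also sound. You take $B = V^+$, $W = V^-$ directly instead of reorienting, and you observe that any well-ordering of the conforming circuits $C_e$ composes to $V$, which spells out a step the paper leaves implicit.
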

\begin{proof}
  Lemma~\ref{lem:FPOrientationsImpl4P} and Corollary~\ref{prop:4PImplConfDecomp}.
\end{proof}

\begin{rem}
  Corollary~\ref{cor:FPOriMatsAllowConfDecomp} can be considered a rough analog of the
  {\em Composition Theorem} \citep[Theorem 5.36]{bachemlpd}.
\end{rem}

Since minors of an FP-oriented matroid inherit the 4-painting property (4P), strong circuit
elimination and conformal vector/covector decomposition are inherited as well. In that sense,
FP-orientability can thus be considered a rather strong feature in comparison to orthogonal
orientability.

\subsection{Examples and Counterexamples of FP-Orientable Matroids}\label{ssec:ExmpCexmpFPOriMats}
In this section, we consider examples of matroids that are FP-orientable as well as examples of
matroids that are not. However, unlike in the finite case the latter does not mean that those
matroids are not orthogonally orientable; in fact, the class of FP-oriented matroids is a proper
subclass of the class of orthogonally oriented matroids, as we shall see. But before we venture
into the territory of truly infinite FP-oriented matroids, we deal with the case of finite
FP-oriented matroids (i.e.\ ordinary oriented matroids) first.

\begin{exmp}\label{exmp:FinOriMatsAreFPOriMats}
  According to Lemma~\ref{lem:From4PToFPOrientations}, every finite oriented matroid $(M, \CC,
  \CC^*)$ is an FP-oriented matroid: Since the pair $\CC, \CC^*$ has the 4-painting property (4P),
  it remains to show that property (2) holds for $\CC, \CC^*$. Let
  $E = B \dot{\cup} W \dot{\cup} G \dot{\cup} R$ be a 4-partition of $E$, let $e \in B \cup W$,
  and assume that the first alternative of (4P) is satisfied. Consider the minor
  $N = M/G\!\setminus\!R$ and the pair of circuit and cocircuit signatures $\CC_N, \CC_N^*$
  of $N$ induced by $\CC$ and $\CC^*$, respectively. Since $(N, \CC_N, \CC_N^*)$ is an ordinary
  oriented matroid, (4P) holds for the pair $\CC_N, \CC_N^*$. Let $E(N) = B \dot{\cup} W$ and assume
  for a contradiction that there is a $U' \in \CC_N^*$ which satisfies the second alternative of (4P)
  with respect to $e \in B \cup W$. Since $U' = U|_{B \cup W}$ for some $U \in \CC^*$ with
  $\supp{U} \subseteq B \cup W \cup R$, the second alternative of (4P) then holds for
  $E = B \dot{\cup} W \dot{\cup} G \dot{\cup} R$ and $e \in B \cup W$ as well, a contradiction. Thus
  the first alternative of (4P) must hold for $E(N) = B \dot{\cup} W$. Any $C' \in \CC_N$ satisfying
  this alternative then yields a $C \in \CC$ with the desired properties. Dually, property (2) holds
  as well if the second alternative of (4P) is satisfied.
\end{exmp}

\subsubsection{The Farkas Property for Finitary and Cofinitary Regular Matroids}
Recall from Example~\ref{exmp:TameRegMats} that a tame regular matroid is signable, i.e. for each
circuit $C$ and for each cocircuit $U$ of a tame regular matroid $M$ there exists a choice of
functions $f_C \colon C \rightarrow \{-1,1\}$ and $g_U \colon U \rightarrow \{-1,1\}$, respectively,
such that
\[
  \sum_{e \in C \cap U} f_C(e) g_U(e) = 0,
\]
where the sum is evaluated over $\ZZ$. If $N$ is a minor of $M$, then we will write $f^M_C, g^M_U$
and $f^N_C, g^N_U$ below to distinguish between the signings of $M$ and $N$, if necessary.\\

Since it suffices to show that cofinitary regular matroids are FP-oriented matroids to derive the
dual statement that finitary regular matroids are FP-oriented matroids, we concentrate on cofinitary
regular matroids for the rest of this section.\\

We say a circuit $C$ of a tame regular matroid is {\em directed} if $f_C$ is constant; dually, we
say a cocircuit $U$ is {\em directed} if $g_U$ is constant. To see that cofinitary regular matroids
are FP-oriented matroids as well, we first prove that every element of such a matroid is either
contained in a directed circuit or in a directed cocircuit. For this purpose, we will need the
following classical theorem for finite regular matroids (see for instance \citet{mintycolandorimats}).

\begin{thm}\label{thm:MicroMenger}
  Let $M$ be a finite regular matroid and $e$ an element of $M$. Then either there is a directed
  circuit containing $e$ or else there is a directed cocircuit containing $e$. \qed
\end{thm}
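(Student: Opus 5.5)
The plan is to reduce the statement to the Farkas Lemma for finite oriented matroids, using the signing of $M$ as the orientation. Let $M$ be finite regular on $E$, and fix signings $f_C$ and $g_U$ as in Example~\ref{exmp:TameRegMats}.

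First, turn these signings into a finite oriented matroid. Each circuit $C$ gets the signed circuits $\pm C$ with $C(x)=f_C(x)$, and each cocircuit $U$ gets $\pm U$ with $U(x)=g_U(x)$. Suppose $C\cap U\neq\emptyset$. The identity $\sum_{x\in C\cap U} f_C(x)g_U(x)=0$ over $\ZZ$ forces the products $f_C(x)g_U(x)$ to take both values $+1$ and $-1$, so $C\perp U$ in the sense of Definition~\ref{defn:OrthSS}. This is exactly Example~\ref{exmp:TameRegMatsAreOrthOriMats}. Since $E$ is finite, Example~\ref{exmp:FinOriMatsAreOrthOriMats} shows that $(M,\CC,\CC^*)$ is an ordinary oriented matroid. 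Under this orientation, a circuit is directed (constant $f_C$) exactly when one of $\pm C$ is positive, and likewise for cocircuits.

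Second, apply the classical Farkas Lemma for finite oriented matroids (Bj\"orner et al., \citep[Corollary 3.4.6]{orimatsbook}). It states that every element lies in a positive signed circuit or in a positive signed cocircuit, but not both. Equivalently, this is the case $B=E$, $W=G=R=\emptyset$ of the 4-painting property, which Example~\ref{exmp:FinOriMatsAreFPOriMats} asserts for finite oriented matroids. A positive circuit or cocircuit through $e$ is then a directed one, which proves the theorem. The ``not both'' part is not needed, but it also follows directly from orthogonality.

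The only nontrivial step is the finite Farkas Lemma itself. To keep the argument self-contained, I would prove it by Minty's painting induction on $|E|$. If $E=\{e\}$, then $e$ is a loop or a coloop, giving a directed circuit or cocircuit. Otherwise pick $f\neq e$ and apply induction in two ways. In $M\setminus f$, a directed circuit through $e$ lifts to $M$ unchanged. In $M/f$, a directed cocircuit through $e$ lifts unchanged by Lemma~\ref{lem:OrthImplUniSigInh}. In the remaining case, we have a directed cocircuit $U_1$ of $M\setminus f$ and a directed circuit $C_2$ of $M/f$, both through $e$. Lift them to a cocircuit $U$ of $M$ with $U\subseteq U_1\cup f$ and a circuit $C$ of $M$ with $C\subseteq C_2\cup f$. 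Then $C\cap U\subseteq\{e,f\}$ and contains $e$. If $f\notin C$ or $f\notin U$, that lift is already directed and we are done. If $f\in C\cap U$, then $C\cap U=\{e,f\}$, and orthogonality forces $C$ and $U$ to agree in sign at $e$, which makes one of them directed after all. This sign bookkeeping at $f$ is the step I expect to be the main obstacle, and I would do it carefully using the reorientation convention that makes $C_2$ and $U_1$ positive at $e$.

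Alternatively, one could represent $M$ by a totally unimodular real matrix and invoke linear Farkas together with conformal decomposition. That route requires matching the representation's signs with the given $f_C$ and $g_U$, so the purely combinatorial route above is preferable.
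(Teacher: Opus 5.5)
Your proposal is correct, but it does not follow the paper's route. The paper gives no argument for this statement: it quotes it as Minty's classical theorem, with a reference and \qed.

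You derive it from the general theory instead. The $\ZZ$-sum condition of a signing implies sign-orthogonality whenever $C\cap U\neq\emptyset$. So the signing is a finite oriented matroid, and in it the directed circuits and cocircuits are exactly the supports of positive signed ones. The oriented-matroid Farkas Lemma then finishes the proof; this is the case $B=E$ of (4P), as used in Example~\ref{exmp:FinOriMatsAreFPOriMats}, and ``not both'' follows from orthogonality. This fits the paper's framework, since Minty's result becomes a special case of the Bland--Las Vergnas painting lemma. The price is citing a strictly more general theorem rather than the classical one.

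Your optional self-contained induction has the right structure, but one claimed step is false as stated. From $C\subseteq C_2\cup f$ and $U\subseteq U_1\cup f$ you only get $C\cap U\subseteq (C_2\cap U_1)\cup f$, and $C_2\cap U_1$ can be bigger than $\{e\}$. Take the oriented matroid realized by $e=(1,0)$, $a=(0,1)$, $f=(-1,-1)$, $h=(1,-1)$ in $\RR^2$. Then $M\setminus f$ has no positive circuit through $e$, and $M/f$ has no positive cocircuit through $e$, so you are in the remaining case. There $U_1=C_2=\{e,a\}$, both positive, and the lifts satisfy $C\cap U=\{e,a,f\}$. For a signing of a regular matroid, the $\ZZ$-sum condition does force $C_2\cap U_1=\{e\}$, but that needs the sum, not just sign-orthogonality.

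You do not need the claim at all. Normalize $C_2$ and $U_1$ to be positive. Then $C$ and $U$ agree in sign on $(C\cap U)\setminus f\subseteq C_2\cap U_1$, a set containing $e$. Orthogonality therefore forces $f\in C\cap U$ with $C(f)=-U(f)$. Whichever of $C,U$ is positive at $f$ is the required directed circuit or cocircuit.

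Relatedly, your sentence saying orthogonality forces $C$ and $U$ to agree at $e$ is backwards. They agree at $e$ by normalization, and orthogonality forces them to disagree at $f$.
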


A {\em flow} in a cofinitary regular matroid $M$ with ground set $E$ is a function $f \colon E \to \ZZ$
such that for any cocircuit $U$ of $M$ we have $\sum_{e \in U} f(e) g_U(e) = 0$. Thus for example
for any circuit $C$ the function obtained by extending $f_C$ to the whole of $E$ by 0s outside $C$
is a flow. We say a flow is {\em non-negative} if it only takes non-negative integral values. A
{\em flowset} is a subset $X$ of $E$ such that the characteristic function $\mathds{1}_X$ of $X$ is a
flow. Thus any directed circuit is a flowset.\\

To show that every element of a cofinitary regular matroid is either contained in a directed circuit
or in a directed cocircuit, we will need three more lemmas. The first one can be obtained from
Theorem~\ref{thm:MicroMenger} by a straightforward compactness argument.

\begin{lem}\label{lem:FlowMenger}
  Let $M$ be a cofinitary regular matroid and $e$ an element of $M$. Then either there is a flowset
  containing $e$ or else there is a directed cocircuit containing $e$. \qed
\end{lem}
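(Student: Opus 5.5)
We argue by compactness over finite minors, with Theorem~\ref{thm:MicroMenger} supplying the finite alternative. Suppose there is no directed cocircuit containing $e$. We aim to build a flowset containing $e$. Since $M$ is cofinitary, every cocircuit is finite, so the flow condition $\sum_{f \in U} \mathds{1}_X(f) g_U(f) = 0$ for a single cocircuit $U$ involves only the finitely many coordinates in $U$. We view candidate flowsets as points of the compact space $\{0,1\}^E$ in the product topology. For each cocircuit $U$, the set of $X$ with $e \in X$ satisfying the equation at $U$ is clopen, since it depends only on $X \cap (U \cup e)$. It therefore suffices to show that the family of these closed sets has the finite intersection property.

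Fix finitely many cocircuits $U_1, \ldots, U_n$ and let $F = \{e\} \cup U_1 \cup \cdots \cup U_n$, a finite set. We pass to the finite minor $N = M \setminus \overline{F}$, i.e.\ the restriction to $F$. Since $M^*$ is finitary, $N$ is a finite regular matroid, as minors of tame regular matroids are regular and finite ones are regular in the usual sense. Its cocircuits are the minimal nonempty sets $U \cap F$ for cocircuits $U$ of $M$ (the dual of Proposition~\ref{prop:CharNOCMinor}). We take the signings $g^N$ to be the restrictions of the $g^M$, which is consistent up to sign by the orthogonality argument of Lemma~\ref{lem:OrthImplUniSigInh}, since signable tame matroids are orthogonally oriented by Example~\ref{exmp:TameRegMatsAreOrthOriMats}.

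Apply Theorem~\ref{thm:MicroMenger} to $N$ and $e$. Suppose first that $N$ has a directed cocircuit $U'$ containing $e$. Then there is a cocircuit $U$ of $M$ with $U' \subseteq U \subseteq U' \cup \overline{F}$ and $g_U$ restricting to $\pm g^N_{U'}$. This $U$ is not yet directed. Here we refine the compactness: rather than only the finitely many $U_i$, we run the argument over the directed system of all finite $F \ni e$ and collect the sets of directed cocircuits of $M|F$ through $e$. The key claim is that each directed cocircuit $U'$ of $M|F$ through $e$ with $F' \supseteq F$ large enough extends consistently, and that a directed cocircuit of $M$ through $e$ arises as a limit; cocircuits being finite helps because $U$ is determined by a finite set. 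We expect this branch to be the main obstacle. As a fallback, we use instead the restriction-closure formulation: if for every finite $F$ the minor $M.F$ (contraction to $F$) has no directed cocircuit through $e$, then each $M.F$ has a directed circuit through $e$, and the previous dichotomy is arranged so that the hypothesis \enquote{no directed cocircuit in $M$} transfers to the relevant finite minors. Choosing contraction minors makes this transfer work, because a cocircuit of $M.F$ is a cocircuit of $M$ contained in $F$ (the dual of restriction). So a directed cocircuit of $M.F$ through $e$ would be a directed cocircuit of $M$.

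With the contraction minors, Theorem~\ref{thm:MicroMenger} yields a directed circuit $D_F$ of $M.F$ through $e$. By Lemma~\ref{lem:StructCircOfMinors} it lifts to a circuit $D$ of $M$ with $D_F \subseteq D$. Then $\mathds{1}_{D_F}$ satisfies the flow equations at every cocircuit of $M$ contained in $F$, since those are cocircuits of $M.F$ and circuits are orthogonal to cocircuits in the signed sense of Example~\ref{exmp:TameRegMats}. Taking $F \supseteq U_1 \cup \cdots \cup U_n \cup e$ shows that the finite intersection property holds. By Tychonoff, some $X$ with $e \in X$ satisfies every cocircuit equation, so $X$ is a flowset containing $e$.
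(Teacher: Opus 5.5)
Your final argument is correct and is the ``straightforward compactness argument'' the paper invokes without giving details. You use compactness of $\{0,1\}^E$; each flow constraint is clopen because the cocircuits are finite. The finite intersection property comes from applying Theorem~\ref{thm:MicroMenger} to the finite contraction $M.F$, whose cocircuits are exactly the cocircuits of $M$ inside $F$ with their signs unchanged, so a directed cocircuit there would be a directed cocircuit of $M$.

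The restriction-minor detour and the unproved ``limit of directed cocircuits'' claim play no role and should simply be deleted. If the ``or else'' is meant to be exclusive, that part is immediate: a flowset $X \ni e$ and a directed finite cocircuit $U \ni e$ would give $\sum_{x\in U}\mathds{1}_X(x)g_U(x) = \pm|X\cap U| \neq 0$.
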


The second lemma establishes a certain kind of strong conformal decomposition that is possible in
the context of tame regular matroids. In fact, this is one of the main reasons why the claim that
every element of a cofinitary tame matroid is either contained in a directed circuit or in a directed
cocircuit turns out to be true.

\begin{lem}\label{lem:DisjScrawlDecomp}
  Let $M$ be a regular matroid and $g \colon E \rightarrow \{-1,1\}$ a function with finite support
  $\supp{g} \ne \emptyset$ such that for every circuit $C$ of $M$
  \[
    \sum_{e \in C} f_C(e) g(e) = 0.
  \]
  Then there exists a decomposition of $\supp{g}$ into disjoint cocircuits $U_1, \ldots, U_k$ of $M$
  such that $\mathop{g}|_{U_i} = \pm \mathop{g}_{U_i}$.
\end{lem}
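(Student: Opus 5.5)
Since $\supp{g}$ is finite, the plan is to reduce to a finite regular matroid and then use the finite theory. The setting is a regular (tame) matroid $M$ with signings $f_C$, $g_U$, and $g$ is a $\{-1,1\}$-valued function on a finite set that is "orthogonal" over $\ZZ$ to every signed circuit. I will argue by induction on $|\supp{g}|$: it suffices to find a single cocircuit $U \subseteq \supp{g}$ with $g|_U = \pm g_U$. Then $g' = g - g|_U$, extended by $0$, again has values in $\{-1,0,1\}$. It satisfies $\sum_{e\in C} f_C(e) g'(e) = 0$ for every circuit $C$, because $\sum_{e\in C\cap U} f_C(e)g_U(e)=0$ by signability. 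Its support $\supp{g}\setminus U$ is strictly smaller, so induction yields the remaining disjoint cocircuits. If the support becomes empty, we stop.

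To find such a $U$, first note that $\supp{g}$ is a coscrawl. By Lemma~\ref{lem:ScrawlChar} (dual version), it suffices that $\supp{g}$ never meets a circuit $C$ in exactly one element. This holds, since otherwise $\sum_{e\in C} f_C(e)g(e) = \pm 1 \neq 0$. Next I pass to the finite minor $N = M.\supp{g}$, i.e.\ contract the complement $\overline{\supp{g}}$. Its cocircuits are exactly the cocircuits of $M$ contained in $\supp{g}$ (dual of restriction), and its circuits are the minimal nonempty sets $C\cap\supp{g}$ by Proposition~\ref{prop:CharNOCMinor}. The signing of $M$ induces a signing of $N$ with $f^N_{C'} = \pm f^M_C|_{C'}$, where $C$ is a circuit with $C' \subseteq C \subseteq C'\cup\overline{\supp{g}}$ as in Lemma~\ref{lem:StructCircOfMinors}. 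This is well defined up to sign by the same argument as Lemma~\ref{lem:OrthImplUniSigInh}, applied to the integer-valued orthogonality, using cocircuits of $N$ that meet $C'$ in two elements. Consequently $g$, viewed on $E(N)$, satisfies $\sum_{e\in C'} f^N_{C'}(e)g(e) = 0$ for all circuits $C'$ of $N$. The reason is that for $C$ as above, $C\cap\supp{g} = C'$ and $g$ vanishes outside $\supp{g}$. So $g$ lies in the cocycle space of the finite regular matroid $N$ over $\ZZ$: it is orthogonal to all circuit vectors, and over a totally unimodular representation the orthogonal complement of the circuit space is the cocircuit (row) space.

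Now I apply the finite conformal decomposition for regular matroids. Any $\{-1,0,1\}$-valued cocycle of a finite regular matroid is a conformal sum of signed cocircuit vectors (the standard result from the theory of totally unimodular matrices, or equivalently of finite oriented matroids via Corollary~\ref{prop:4PImplConfDecomp} applied to $N^*$). Because all values of $g$ are $\pm1$ and the sum is conformal, the cocircuits in such a decomposition must have pairwise disjoint supports. Hence they directly give the desired $U_1,\dots,U_k$ in $N$, and we would not even need the induction. Each $U_i$ is a cocircuit of $M$ as well. It remains to check $g|_{U_i}=\pm g^M_{U_i}$. Here $g^N_{U_i}$ and $g^M_{U_i}$ agree up to sign, by uniqueness of the signing up to sign as in Proposition~\ref{prop:OrthAxiomsImplUniquenessOfSignature}: both are orthogonal over $\ZZ$ to the signed circuits, and these meet $U_i$ in two elements by Lemma~\ref{lem:2ElemCircCocircInters}.

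The main obstacle I expect is the middle step: making sure the induced signing on $N$ is consistent, so that the finite regular theory applies, and identifying "orthogonal to all circuits" with "in the cocircuit lattice" in the finite case. If this identification is awkward, the fallback is to prove the existence of one cocircuit $U\subseteq\supp{g}$ with $g|_U=\pm g_U$ directly. I would pick a minimal nonempty coscrawl-with-conformal-sign and use a Farkas/Minty-type argument via Theorem~\ref{thm:MicroMenger} on a reoriented $N$ (reorient so that $g\equiv 1$). Any element then lies in a directed cocircuit, since a directed circuit through it would give a positive sum with $g$. Then I run the induction of the first paragraph.
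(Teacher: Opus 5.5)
Your proposal is correct, and your fallback is exactly the paper's proof. That proof reorients so that $g \geq 0$ and contracts to the finite regular minor $N = M.\supp{g}$. It then applies Theorem~\ref{thm:MicroMenger} at an element $e$ and rules out a directed circuit $C'$: such a circuit lifts to a circuit $C$ of $M$ with $C \cap \supp{g} = C'$, giving $\sum_{e' \in C} f_C(e')g(e') = \pm|C'| \neq 0$. Finally it peels off the resulting directed cocircuit, which is a cocircuit of $M$, by induction on $|\supp{g}|$.

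Your main route is genuinely different. It imports Tutte's integral conformal decomposition for regular chain groups (equivalently, totally unimodular matrices) and obtains all the disjoint cocircuits in one stroke, without the peeling induction. The price is two external facts that the paper's route avoids:
\begin{enumerate*}
  \item[(i)] The abstract induced signing of $N$ must be, after a reorientation absorbed into the matrix, the signing of a totally unimodular representation. This needs uniqueness of orientations of binary matroids up to reorientation, which is precisely the step you flagged as the obstacle.
  \item[(ii)] The integral decomposition theorem itself.
\end{enumerate*}
The Minty alternative, by contrast, holds for any signing, because a signed finite regular matroid is already a finite oriented matroid.

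Two small remarks.
\begin{enumerate*}
  \item[(a)] Your parenthetical alternative via Corollary~\ref{prop:4PImplConfDecomp} yields only a possibly overlapping conformal covering (a composition, not a sum), so disjointness does not follow from it alone. You would have to combine it with your first-paragraph peeling step. The statement you actually want there is the dual of Proposition~\ref{prop:4PImplConfDecompOfOrthSS}, since the sign vector of $g$ is not known a priori to be a covector. It applies because a vanishing sum of $\pm 1$ terms over a nonempty intersection forces sign-orthogonality.
  \item[(b)] If you simply equip $N$ with the induced signing, the final uniqueness check is unnecessary. Cocircuits of $N$ are cocircuits of $M$ inside $\supp{g}$, so you may take $g^N_U := g^M_U$.
\end{enumerate*}
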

\begin{proof}
  Without loss of generality we may assume that $g$ is non-negative. Note that $\supp{g}$ is a
  coscrawl by Lemma~\ref{lem:ScrawlChar} since it cannot meet any circuit only once. We prove the
  claim by induction on the size of $\supp{g}$. If $\supp{g}$ consists of a single element, then it
  is a coloop. In this case the claim follows trivially. Now assume $|\supp{g}| > 1$, let
  $e \in \supp{g}$, and consider the minor $N = M.\supp{g}$. Since $N$ is a finite regular matroid,
  by Theorem~\ref{thm:MicroMenger} the element $e$ is either contained in the support of a directed
  circuit $C'$ or in the support of a directed cocircuit $U'$ of $N$ but not both. If the first
  alternative holds, then there exists a circuit $C$ of $M$ such that $f^M_C|_{C'} = f^N_{C'}$ and
  $\sum_{e' \in C} f_C(e')g(e') \ne 0$, a contradiction. So there exists a directed cocircuit $U'$
  of $N$ containing $e$. Since $g|_{U'} = \pm g^N_{U'} = \pm g^M_{U'}$ the function
  $g' = g \mp g^N_{U'}$ is non-negative, satisfies $\sum_{e' \in C} f_C(e') g'(e') = 0$ for every
  circuit $C$ of $M$, and has a support that is strictly contained in $\supp{g}$. Hence we can apply
  the induction hypothesis on $g'$ and are done.
\end{proof}

The third and last lemma shows that the supports of non-negative flows on cofinitary regular matroids
consist of collections of flowsets; it is heavily dependent on the previous lemma.

\begin{lem}\label{lem:InSupport}
  Let $f$ be a non-negative flow on a cofinitary regular matroid $M$ and let $e$ be an element in the support
  $\supp{f}$ of $f$. Then there is a flowset $X$ with $e \in X \subseteq \supp{f}$.
\end{lem}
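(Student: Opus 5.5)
The plan is to reduce to Lemma~\ref{lem:FlowMenger} applied to a suitable minor, and to use Lemma~\ref{lem:DisjScrawlDecomp} to rule out the directed-cocircuit alternative. Let $f$ be a non-negative flow with $e \in \supp{f}$, and set $S = \supp{f}$. Consider the restriction $N = M|S$. Since $M$ is cofinitary, so is $N$, and it is regular as a minor of a tame regular matroid; its signings are inherited from $M$. By Lemma~\ref{lem:FlowMenger} applied to $N$ and $e$, either there is a flowset $X$ of $N$ containing $e$, or there is a directed cocircuit $U'$ of $N$ containing $e$. In the first case I will argue that a flowset of $N = M|S$ is also a flowset of $M$. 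Every cocircuit $U$ of $M$ meets $S$ in a coscrawl of $N$, and for a cofinitary matroid this intersection is finite. I would then decompose $U \cap S$ into disjoint cocircuits of $N$ with compatible signs via Lemma~\ref{lem:DisjScrawlDecomp}, applied in $N$ to $g = g^M_U|_S$. This needs $\sum_{e' \in C} f_C(e') g(e') = 0$ for every circuit $C$ of $N$, which holds because circuits of $N$ are circuits of $M$ contained in $S$. Summing $\mathds{1}_X$ against each piece then gives $0$, so $X$ is a flowset of $M$ with $e \in X \subseteq S$.

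The second case must be shown impossible, and I expect this to be the main step. Take a directed cocircuit $U'$ of $N = M|S$ containing $e$. Since $N$ is a restriction, $U' = U \cap S$ for some cocircuit $U$ of $M$, and $U$ is finite because $M$ is cofinitary. The signing $g^N_{U'}$ agrees up to sign with $g^M_U|_{U'}$; this is the analogue of Lemma~\ref{lem:OrthImplUniSigInh} for signable matroids, and it should follow from the uniqueness of signings up to sign via $2$-element circuit-cocircuit intersections, Lemma~\ref{lem:2ElemCircCocircInters}. Because $f$ is a flow, $\sum_{e' \in U} f(e') g^M_U(e') = 0$. Only elements of $U \cap S = U'$ contribute, and on $U'$ the signs $g^M_U$ are constant while $f > 0$. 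The sum is therefore nonzero, a contradiction. Hence only the first alternative can occur, which gives the lemma.

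The delicate points are these. First, I must check that a flowset of $M|S$ really is a flowset of $M$. The decomposition argument above handles this, but it requires $U \cap S$ to be nonempty only when it is a genuine coscrawl of $N$, which holds since cocircuits of $M$ restrict to unions of cocircuits of $M|S$. Second, I must check that signings restrict compatibly. If the decomposition step becomes awkward, I would instead pick the flowset in $N$ and verify the flow condition directly: for a cocircuit $U$ of $M$, $U \cap S$ is finite and its cocircuit signs are induced, and I would apply Lemma~\ref{lem:DisjScrawlDecomp} inside the finite minor $N.(U\cap S)$.
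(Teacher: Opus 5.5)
Your proposal is correct and follows the paper's proof. Both restrict to $N = M|\supp{f}$, apply Lemma~\ref{lem:FlowMenger} there, and lift the resulting flowset to $M$ by applying Lemma~\ref{lem:DisjScrawlDecomp} in $N$ to $g^M_U|_{\supp{f}}$ for each cocircuit $U$ of $M$. The one addition is that you justify why no directed cocircuit of $N$ can contain $e$: you extend it to a finite cocircuit $U$ of $M$ whose intersection with $\supp{f}$ is that cocircuit, and then contradict the flow condition on $U$. The paper simply asserts this step.
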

\begin{proof}
  Let $N = M|{\supp{f}}$. By applying Lemma \ref{lem:FlowMenger} to $N$ and $e$, we obtain a flowset $X$ with
  $e \in X \subseteq E(N) = \supp{f}$, since there is certainly no directed cocircuit containing $e$. We claim
  that $\mathds{1}_X$ is a flow in $E(M)$ as well: Let $U$ be a cocircuit of $M$ such that
  $U \cap X \ne \emptyset$. We want to apply Lemma~\ref{lem:DisjScrawlDecomp} to $g' = g^M_U|_{\supp{f}}$ in
  $N$ so we have to check that its requirements are met. Let $C'$ be a circuit of $N$. Since
  $f^N_{C'} = f^M_{C'}$ we find indeed that
  \[
    \sum_{e' \in C'} f^N_{C'}(e) g'(e) = \sum_{e' \in C' \cap U} f^M_{C'}(e) g^M_U(e) = 0.
  \]
  Then by Lemma~\ref{lem:DisjScrawlDecomp} there are disjoint cocircuits $U_1, \ldots, U_k$ of $N$ such that
  $\supp{g'} = U \cap \supp{f} = U_1 \cup \ldots \cup U_k$ and
  $g'|_{U_i} = \mathop{g}_U^M|_{U_i} = \pm \mathop{g}_{U_i}^N$
  for $1 \leq i \leq k$. Thus
  \[
    \sum_{e \in X \cap U} g_U^M(e)
    = \sum_{i = 1}^{k} \sum_{e \in X \cap U_i} g_{U_i}^N(e)
    = 0
  \]
  since
  \[
    \sum_{e \in X \cap U_i} g_{U_i}^N(e) = 0
  \]
  for $1 \leq i \leq k$; this concludes the proof.
\end{proof}

The two Lemmas~\ref{lem:FlowMenger} and \ref{lem:InSupport} finally alow us to derive the desired result
about cofinitary regular matroids.

\begin{thm}\label{thm:MiniMenger}
  Let $M$ be a cofinitary regular matroid and $e$ an element of $M$. Then either there is a directed circuit
  containing $e$ or else a directed cocircuit containing $e$.
\end{thm}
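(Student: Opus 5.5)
By Lemma~\ref{lem:FlowMenger}, either there is a directed cocircuit containing $e$ or there is a flowset $X$ containing $e$. In the first case we are done. So the plan is to handle the second case by extracting a directed circuit through $e$ from $X$. We must also check that the two alternatives cannot both hold. The exclusivity part is immediate from signability. If $C$ is a directed circuit and $U$ a directed cocircuit both containing $e$, then $C \cap U$ is finite (cofinitary regular matroids are tame). The sum $\sum_{e' \in C\cap U} f_C(e')g_U(e')$ is then $\pm|C\cap U| \neq 0$, contradicting signability.

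For existence, assume there is a flowset containing $e$. Among all flowsets containing $e$, we want a minimal one, and then show it is a directed circuit. Minimality is the delicate point, since flowsets can be infinite and a descending chain need not stabilise. So the plan is to argue structurally instead of by Zorn's lemma.

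Let $X$ be a flowset with $e \in X$. First, $X$ is a scrawl. Indeed, $\mathds{1}_X$ is a flow, so $X$ cannot meet any cocircuit $U$ exactly once: the sum $\sum_{e'\in U} \mathds{1}_X(e') g_U(e')$ would then be $\pm 1$. Lemma~\ref{lem:ScrawlChar} then applies. Hence there is a circuit $C$ with $e \in C \subseteq X$, but $f_C$ need not be constant. The idea is to use $C$ as a reference and consider the flow $\mathds{1}_X - f_C$ or $\mathds{1}_X + f_C$ (extended by zero), choosing the sign so that $e$ drops out or stays.

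More cleanly, I would instead work in the finite world. Every cocircuit of $M$ is finite, since $M$ is cofinitary, and flows are defined by finite sums on cocircuits. I would consider the contraction $N = M/(X\setminus C)$ restricted to $C$, or more directly the restriction $M|X$. Inside $M|X$, the element $e$ lies in no directed cocircuit, because $\mathds{1}_X$ restricted to a cocircuit of $M|X$ must still sum to zero. This last point needs the argument of Lemma~\ref{lem:InSupport}, decomposing $U\cap X$ into disjoint cocircuits of $M|X$ via Lemma~\ref{lem:DisjScrawlDecomp}, run in reverse.

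The main step is then a compactness argument on finite minors of $M|X$ to produce a directed circuit through $e$. For each finite set $F \subseteq X$ containing $e$, consider the finite regular minor $M|X$ with $X \setminus F$ contracted. By Theorem~\ref{thm:MicroMenger} it has a directed circuit through $e$ or a directed cocircuit through $e$. A cocircuit of such a contraction minor is a cocircuit of $M|X$, which would sit inside $F$, and we just excluded directed ones. So every such minor has a directed circuit $D_F \ni e$, which lifts to a circuit of $M|X$ that is directed on $F$ by Lemma~\ref{lem:OrthImplUniSigInh}. Choosing these circuits compatibly as $F$ grows is the expected obstacle.

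The route I would try first is to use finiteness of cocircuits. Fix a base $B$ of $M|X$ avoiding $e$; this is possible since $e$ is not a coloop. Then $e$'s fundamental circuit with respect to $B$ is determined by finitely many decisions through cocircuits. Via a König-type / Tychonoff argument over sign patterns on the finite cocircuits through each element, we obtain a limit set $D$ containing $e$. This $D$ meets no cocircuit exactly once, so it is a scrawl, and it is directed on every finite set. A minimal such limit is then a single circuit, which is directed by construction.
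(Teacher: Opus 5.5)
Your exclusivity argument is correct, and goes beyond what the paper proves. Your reduction to the finite minors $M|X.F$ is also sound. The existence part, however, has a genuine gap at the final step.

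\textbf{The compactness step makes no progress.} A Tychonoff limit of the circuits $D_F$, or of their signings normalised by $f_{D_F}(e)=1$, only yields a set $D\ni e$ with $\mathds{1}_D$ a flow. That is just a flowset containing $e$, which Lemma~\ref{lem:FlowMenger} already gave you (indeed $X$ itself). Limits of circuits are merely scrawls. Moreover, ``directed on every finite set'' does not pass to a circuit $C\subseteq D$. Signs are only comparable within a single circuit, so knowing that two elements of $C$ lie together in some $D_F$ that is directed on $F$ says nothing about $f_C$.

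\textbf{The decisive sentence is unproved.} ``A minimal such limit is then a single circuit, which is directed by construction'' is precisely the content of the theorem. It also relies on the minimality you set out to avoid, without showing a minimal object exists. The fundamental-circuit remark does not help either: for a fixed base $B$, the fundamental circuit of $e$ is a single determined circuit, with no choices involved and no reason to be directed.

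\textbf{The missing idea is one you mentioned and then dropped.} Your worry about Zorn's Lemma is unfounded, because every cocircuit $U$ is finite. For a chain of flowsets containing $e$, the traces on $U$ form a chain of subsets of a finite set. So the intersection of the chain agrees on $U$ with one of its members, and is therefore again a flowset containing $e$.

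Now proceed as follows:
\begin{enumerate}
\item Take a minimal flowset $X\ni e$.
\item Take a circuit $C$ with $e\in C\subseteq X$; it exists since $X$ is a scrawl.
\item Normalise so that $f_C(e)=1$, and suppose $f_C(e')=-1$ for some $e'\in C$.
\item Then $\mathds{1}_X+f_C$, with $f_C$ extended by $0$, is a non-negative flow. Its support contains $e$ but not $e'$.
\item Lemma~\ref{lem:InSupport} then gives a flowset $X'$ with $e\in X'\subseteq X\setminus e'$, contradicting minimality.
\end{enumerate}
Hence $C$ is directed. This is the paper's argument. Your passage to $M|X$ and the compactness over finite minors are unnecessary.
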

\begin{proof}
  Suppose that there is no directed cocircuit containing $e$. Then there is some flowset containing $e$
  by Lemma~\ref{lem:FlowMenger}. Let $X$ be a minimal such flowset (such an $X$ exists by Zorn's Lemma).
  Since it is a flowset, $X$ cannot meet any cocircuit just once. So by Lemma~\ref{lem:ScrawlChar} it is
  a scrawl. Let $C$ be a circuit with $e \in C \subseteq X$. Without loss of generality we may assume
  that $f_C(e) = 1$. Suppose for a contradiction that $C$ is not directed. Then there is some
  $e' \in C$ with $f_C(e') = -1$. Let $f$ be obtained by extending $f_C$ to the whole of $E$
  by 0s outside $C$. Then $f + \mathds{1}_X$ is a non-negative flow, so by Lemma~\ref{lem:InSupport}
  there is a flowset $X'$ with $e \in X' \subseteq \supp{f + \mathds{1}_X} \subseteq X - e'$,
  contradicting the minimality of $X$. Thus $C$ is a directed circuit containing $e$, completing the
  proof.
\end{proof}

\begin{cor}\label{cor:MiniMengerFinRegMats}
  Let $M$ be a finitary regular matroid and $e$ an element of $M$. Then either there is a directed circuit
  containing $e$ or else a directed cocircuit containing $e$.
\end{cor}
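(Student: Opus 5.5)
The corollary should follow from Theorem~\ref{thm:MiniMenger} by duality, so the plan is to apply that theorem to $M^*$ and translate back. Let $M$ be a finitary regular matroid and $e \in E$.

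First, I need $M^*$ to satisfy the hypotheses of Theorem~\ref{thm:MiniMenger}. By definition, the dual of a finitary matroid is cofinitary. Tameness is self-dual, because circuit–cocircuit intersections of $M^*$ are exactly cocircuit–circuit intersections of $M$. Signability (Example~\ref{exmp:TameRegMats}) is symmetric in the roles of circuits and cocircuits. I take the signing of $M^*$ to be the one with $f^{M^*}_U = g^M_U$ and $g^{M^*}_C = f^M_C$. The defining orthogonality condition $\sum_{e \in C \cap U} f_C(e) g_U(e) = 0$ is unchanged under this swap, so $M^*$ is a cofinitary regular matroid.

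Next, I apply Theorem~\ref{thm:MiniMenger} to $M^*$ and $e$. This yields either a directed circuit of $M^*$ containing $e$, or a directed cocircuit of $M^*$ containing $e$.

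Finally, I translate back to $M$. A circuit of $M^*$ is a cocircuit of $M$. Under the chosen signing, it is directed in $M^*$ (constant $f^{M^*}$) exactly when it is directed as a cocircuit of $M$ (constant $g^M$). Likewise, a directed cocircuit of $M^*$ is a directed circuit of $M$. This gives the claimed dichotomy.

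The only point needing care is that "directed" depends on the chosen signing. The theorem is applied with the specific swapped signing, so the directedness notions coincide exactly. Hence I expect no real obstacle; the argument is purely a dualization.
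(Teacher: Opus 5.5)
Your proposal is correct and matches the paper's proof, which obtains the corollary directly from Theorem~\ref{thm:MiniMenger} by duality. As you do, it uses that $M^*$ is a cofinitary regular matroid under the swapped signing, so directed circuits and cocircuits of $M^*$ are exactly the directed cocircuits and circuits of $M$.
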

\begin{proof}
  Theorem~\ref{thm:MiniMenger}.
\end{proof}

\begin{exmp}\label{exmp:CofinRegMatsAreFPOriMats}
  Let $M$ be a cofinitary regular matroid. Then $M$ is FP-orientable: First, note that every
  minor of $M$ is a cofinitary regular matroid as well. Second, every reorientation of a signing
  is a signing again. Finally, every signing induces a pair $\CC, \CC^*$ of circuit and cocircuit
  signatures of $M$ (see for instance Example~\ref{exmp:TameRegMatsAreOrthOriMats}). Now, let $N$
  be a minor of $M$ and denote the circuit and cocircuit signatures induced on $N$ by the signing
  of $M$ by $\CC_N$ and $\CC^*_N$, respectively. Then $\CC_N = \SC^\CC(N)$ and
  $\CC^*_N = \SC^{\CC^*}(N)$. Since the pair $\SC^\CC(N), \SC^{\CC^*}(N)$ has the Farkas
  property (FP) by Theorem~\ref{thm:MiniMenger}, it follows that $(M, \CC, \CC^*)$ is an
  FP-oriented matroid.\\
  Dually, if $M$ is a finitary regular matroid, then by Corollary~\ref{cor:MiniMengerFinRegMats}
  and a similar chain of arguments it follows that $M$ is FP-orientable as well.
\end{exmp}

\subsubsection{The Farkas Property for Algebraic Cycle Matroids}
Let $G$ be a graph with vertex set $V$. The {\em elementary algebraic cycles} of $G$ are the edge
sets of the finite cycles in $G$ and those of its double rays. As Higgs shows in
\citep{infgraphsmatroids}, the elementary algebraic cycles of $G$ are the circuits of a matroid
on its edge set $E(G)$ if and only if $G$ includes no subdivision of the Bean Graph. This matroid
is called the {\em algebraic cycle matroid} of $G$ and is denoted by $M_{AC}(G)$. It is tame (see
for instance for the proof of Theorem~4.14 in \citep{tsmandduality}) and its dual is examined in
more detail in \citep[Section 5]{infmatsingraphs}. As for its cocircuits: If $X$ is a set of
vertices, then the corresponding {\em cut} $U_X$ is the set of edges with one end in $X$ and the
other in $V \setminus X$. If $G[X]$ is rayless, then the cut $U_X$ is called a {\em nibble}. The
minimal non-empty nibbles are the cocircuits of $M_{AC}(G)$. Any nibble is a disjoint union of such
cocircuits, and any directed nibble is a disjoint union of directed cocircuits.\\

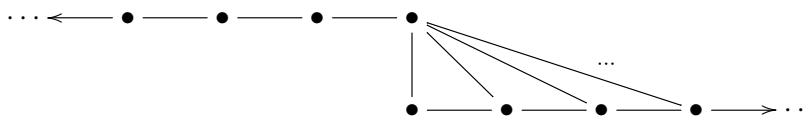
\begin{figure}[h]
\[
  \xymatrix{
    \cdots & \ar[l] \bullet \ar@{-}[r] & \bullet \ar@{-}[r] & \bullet \ar@{-}[r] & \bullet \ar@{-}[d]
                                                                                           \ar@{-}[dr]
                                                                                           \ar@{-}[drr]
                                                                                           \ar@{-}[drrr]
                                                                                           \ar@{}[drrrr]|{\cdots} &                    &                    &                &\\
           &                           &                    &                    & \bullet \ar@{-}[r]             & \bullet \ar@{-}[r] & \bullet \ar@{-}[r] & \bullet \ar[r] & \cdots
  }
\]
\caption{The Bean Graph}
\end{figure}

For the rest of this section we will always assume that $G$ is the underlying graph of some directed
graph that contains no subdivision of the Bean Graph. In this case, we will show that the algebraic
cycle matroid $M_{AC}(G)$ is FP-orientable. Clearly, we can (and will) assume without loss of
generality that $G$ is connected. We will rely on the fact that digraphs inherently possess a property
that is the graph-theoretical analog of the Farkas property. This is a well-known fact for finite
digraphs, see for instance \citep[Section 2.2]{bachemlpd}; for the proof in the more general case,
we will need the following lemma.

\begin{lem}\label{lem:Nibble}
  Let $T$ be a rayless tree which is a subgraph of $G$. Then $U_{V(T)}$ is a nibble.
\end{lem}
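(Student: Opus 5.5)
To show that $U_{V(T)}$ is a nibble we must check the definition directly: it is the cut $U_X$ for $X = V(T)$, so what has to be proved is that the induced subgraph $G[V(T)]$ is rayless. The plan is to argue by contradiction. Suppose $G[V(T)]$ contains a ray $R = v_0v_1v_2\ldots$. Then all vertices of $R$ lie in $T$, so $T$ has infinitely many vertices. The edges of $R$ need not be edges of $T$, so we cannot conclude at once that $T$ has a ray. Instead we use the rays of $R$ together with the paths of $T$ to build either a ray in $T$ or a subdivision of the Bean Graph in $G$. The first contradicts the assumption that $T$ is rayless; the second contradicts our standing assumption on $G$.

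The key step applies the standard fact that an infinite connected rayless graph, here $T$, contains a vertex of infinite degree; more precisely, König-type arguments apply to the finite/infinite branching of $T$. Root $T$ at $v_0$. For each $v_i$, let $P_i$ be the unique $v_0$--$v_i$ path in $T$. The union of the $P_i$ is an infinite subtree $T'$ of $T$. By König's Infinity Lemma, either $T'$ has a ray, which is impossible, or some vertex $t$ of $T'$ has infinitely many children whose subtrees contain vertices of $R$. Fix such a vertex $t$, with children $c_1, c_2, \ldots$, and vertices $r_1, r_2, \ldots$ of $R$ with each $r_j$ lying in the subtree $T_j$ below $c_j$. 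These subtrees are pairwise disjoint.

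Next we build the Bean subdivision. The Bean Graph consists of a ray (the "tail") attached to a vertex $u$ of infinite degree, together with a second ray whose vertices are each joined to $u$. We take $t$ as the hub $u$. Passing to a subsequence, we may assume the $r_j$ appear along $R$ in increasing order, and that the segments of $R$ between consecutive $r_j$ avoid $t$; this is possible since $R$ meets $t$ at most once. The $t$--$r_j$ paths inside $T_j \cup \{t\}$ are then internally disjoint from each other. Choosing the subsequence more sparsely, we can also make them disjoint from a fixed tail of $R$ apart from their endpoints. The remaining difficulty is the extra ray hanging off the hub. For this we use $G$'s connectivity and the ray $R$ itself: the initial part of $R$ before $r_1$, or the part of $R$ far along beyond all chosen $r_j$, would serve, after suitable rerouting.

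The expected main obstacle is making the spokes $t$--$r_j$ genuinely internally disjoint from the rim (the tail of $R$ through the $r_j$) while also obtaining the separate tail ray. The plan is to first get the "fan" (a ray $R'$ together with infinitely many disjoint paths from $t$ to $R'$) by the sparse-subsequence argument above. This fan is exactly a subdivided infinite fan, and then we check whether the stated Bean configuration already follows from the fan plus a second disjoint ray. If the Bean Graph as defined needs an additional ray, we try obtaining it from a second infinite branching, or we use that an infinite fan together with any ray in $G$ disjoint from it gives the Bean subdivision. If that fails, we use the weaker consequence that $T$ must contain a ray, recovered from the fan structure inside $T$, which already contradicts raylessness.
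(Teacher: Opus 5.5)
\textbf{The gap is in your reduction.} You set out to prove that $G[V(T)]$ is rayless. That is not what being a nibble requires, and it is false in general. Since $U_{V(T)} = U_{V\setminus V(T)}$, the cut $U_{V(T)}$ is a nibble as soon as \emph{either} $G[V(T)]$ or $G[V\setminus V(T)]$ is rayless.

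A concrete example shows your target fails. Let $G$ consist of a ray $r_1r_2r_3\ldots$, a vertex $t$ adjacent to every $r_i$, and a vertex $s$ adjacent only to $t$. Every ray of $G$ contains a tail of $r_1r_2\ldots$, so $G$ has no two disjoint rays and hence no subdivision of the Bean Graph. The star $T$ with centre $t$ and leaves $r_1,r_2,\ldots$ is a rayless tree, and $G[V(T)]$ contains a ray. Yet $U_{V(T)}=\{ts\}=U_{\{s\}}$ is a nibble.

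So a ray $R$ in $G[V(T)]$ alone cannot yield a contradiction. This is exactly why you could not find the ``extra ray hanging off the hub'':
\begin{itemize}
\item the segment of $R$ before $r_1$ is finite;
\item there is no part of $R$ ``beyond all chosen $r_j$'';
\item your fallback that $T$ must contain a ray is false, since the rim $R$ of your fan uses edges outside $T$, and in the example $T$ is just a star.
\end{itemize}

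\textbf{The missing idea is the symmetry of the cut.} Assuming $U_{V(T)}$ is not a nibble gives a ray $R$ in $G[V(T)]$ \emph{and} a ray $S$ in $G[V\setminus V(T)]$; this is how the paper begins.

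Your K\"onig argument then correctly produces an infinite star in $T$ with centre $t$ and leaves on $R$; the paper gets this from the Star--Comb Lemma. Truncate each spoke at its first vertex on $R$ to obtain a fan. The ray $S$ is automatically disjoint from this fan, because it avoids $V(T)$.

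Since $G$ is connected, there is a shortest path from $S$ to the fan. Extend it into $t$ along a spoke, or along a segment of $R$ followed by a spoke, after discarding finitely many spokes. This attaches a tail of $S$ at the hub and gives the required subdivision of the Bean Graph, which contradicts the standing assumption on $G$.
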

\begin{proof}
  Suppose not for a contradiction. So there is some ray $R$ in $G[V(T)]$. Since
  $U_{V(T)} = U_{V \setminus V(T)}$, there must also be some ray $S$ in $G[V \setminus V(T)]$. Since
  $T$ is rayless, by the Star-Comb Lemma \citep[Lemma 8.2.2]{graphtheory} there must be an infinite
  star in $T$ with all leaves on $R$. Then this star, together with $R$, $S$, and a path joining them,
  gives a subdivision of the Bean Graph in $G$, which is the desired contradiction.
\end{proof}

Recall that an {\em arborescence} with root $r$ is a directed graph in which there is a unique directed
walk from $r$ to any other vertex. A {\em reverse arborescence} with root $r$ is a directed graph in
which there is a unique directed walk from any other vertex to $r$. With these technical preparations
we obtain the following theorem which is essentially the Farkas property for the algebraic cycle matroid.

\begin{thm}\label{thm:FPForDigraphs}
  Let $vw$ be an edge of a digraph $D$. Then there is either a directed cycle, a directed double ray or
  a directed minimal (non-empty) nibble containing $vw$.
\end{thm}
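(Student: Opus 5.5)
The plan is to run the classical reachability argument from the finite case, with Lemma~\ref{lem:Nibble} supplying the infinite substitute for ``the reachable set spans a cut''. Let $G$ be the underlying graph of $D$; as assumed, it contains no subdivision of the Bean Graph.

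Let $X$ be the set of vertices reachable from $w$ by a directed path in $D$. Note that directed cycles, directed double rays and directed minimal nibbles are exactly the directed elementary algebraic cycles and the directed cocircuits of $M_{AC}(G)$.

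\textbf{Case 1: $v \in X$.} Take a directed path from $w$ to $v$. Together with $vw$ it forms a directed cycle containing $vw$, and we are done.

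\textbf{Case 2: $v \notin X$.} By definition of $X$, no edge of $D$ leaves $X$. So every edge of the cut $U_X$ is directed into $X$, and $U_X$ contains $vw$. If $G[X]$ is rayless, then $U_X$ is a directed nibble. Since any directed nibble is a disjoint union of directed cocircuits, the cocircuit containing $vw$ is the required directed minimal nibble.

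\textbf{Case 2, complementary side.} Let $Y$ be the set of vertices from which $v$ is reachable. We may assume $Y \cap X = \emptyset$, since otherwise we are in Case~1. No edge of $D$ enters $Y$, so $U_Y$ is directed, contains $vw$, and is a nibble whenever $G[Y]$ is rayless.

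\textbf{Remaining case: both sides contain rays.} Assume $G[X]$ and $G[Y]$ both contain rays. Here I aim to produce a directed double ray through $vw$. The first step is to show that $D[X]$ contains a directed ray starting at $w$. Choose a reachability tree $T$ from $w$ spanning $X$, i.e.\ an out-arborescence with root $w$. If $T$ has a ray, it is a directed ray from $w$. If $T$ is rayless, Lemma~\ref{lem:Nibble} shows that $U_{V(T)} = U_X$ is a nibble, contradicting the assumption that $G[X]$ has a ray. Symmetrically, using an in-arborescence (reverse arborescence) with root $v$ spanning $Y$, we obtain a directed ray ending at $v$. Since $X \cap Y = \emptyset$, the two rays are disjoint, and joining them through $vw$ gives a directed double ray containing $vw$.

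\textbf{Expected main obstacle.} The delicate point is the interplay between Lemma~\ref{lem:Nibble} and the dichotomy rayless/not rayless. Lemma~\ref{lem:Nibble} applies to a rayless tree that is a subgraph of $G$ and concludes that its vertex-cut is a nibble. So if the arborescence $T$ is rayless, $U_X$ is automatically a directed nibble, which settles Case~2 directly. If $T$ has a ray, that ray is directed away from $w$ by construction. Hence the argument really reduces to: if either arborescence is rayless, we get a directed nibble; otherwise both give directed rays and hence a double ray. I would structure the proof around this dichotomy and avoid arguing about $G[X]$ directly.

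I would also double-check one detail: each arborescence needs to be taken within the reachable set, so that every vertex of $X$ lies in $T$. The existence of a spanning arborescence follows from a BFS-type construction, with a path chosen to each vertex and paths glued consistently, for example by well-ordering.
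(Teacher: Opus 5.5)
Your proposal is correct and is essentially the paper's proof, provided you adopt the dichotomy from your last paragraph: if either arborescence is rayless, Lemma~\ref{lem:Nibble} yields a directed nibble through $vw$; otherwise two disjoint directed rays joined by $vw$ give a directed double ray. The paper does exactly this with a maximal arborescence $T$ rooted at $w$ and a maximal reverse arborescence $S$ rooted at $v$, obtained by Zorn's Lemma and equivalent to your spanning reachability trees, splitting first on whether $S$ meets $T$. Two small fixes are needed: a ray in $T$ need not start at $w$, so you must take its tail directed away from the root and prepend the directed $w$-path to it; and in your ``remaining case'' a nibble $U_X$ only forces one of $G[X]$, $G[V\setminus X]$ to be rayless, so the contradiction also needs that $G[Y]\subseteq G[V\setminus X]$ has a ray (another reason to drop the $G[X]$-based case split, as you suggest).
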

\begin{proof}
  Let $T$ be a maximal arborescence with root $w$ which is a subdigraph of $D$ and $S$ a maximal reverse
  arborescence with root $v$ which is a subdigraph of $D$ (these exist by Zorn's Lemma). If $S$ meets $T$,
  then $vw$ lies on a directed cycle. Otherwise, if each of $S$ and $T$ includes a ray, then $vw$ lies on
  a directed double ray. So suppose without loss of generality that $T$ is rayless. Then by
  Lemma~\ref{lem:Nibble} the set $U_{V(T)}$ is a directed nibble containing $vw$. Since this nibble is a
  disjoint union of directed minimal non-empty nibbles, we are done.
\end{proof}

\begin{exmp}\label{exmp:AlgCycleMatsAreFPOriMats}
Bowler and Carmesin show in \citep[Subsection 5.3]{exclminorinfmat} that the algebraic cycle matroid
$M_{AC}(G)$ is signable. From Example~\ref{exmp:TameRegMatsAreOrthOriMats} we know that such a signing
induces a pair $\CC, \CC^*$ of circuit and cocircuit signatures of $M_{AC}(G)$ that provides an
orthogonal orientation $M_{AC}(G)$. Now, let $N$ be a minor of $M_{AC}(G)$ and $A \subseteq E(N)$. In
particular, the sets $\SC^\CC(N)$ and $\SC^{\CC^*}(N)$ then induce circuit and cocircuit signatures
on $N$ which are compatible with the signing of $N$ induced by the signing of $M_{AC}(G)$ (see
Remark~\ref{rem:SSIndByCircSig}). By Theorem~\ref{thm:FPForDigraphs} the pair
$\SC^\CC(N), \SC^{\CC^*}(N)$ has the Farkas property (FP); since this property is not lost under the
action of reorientation, the pair ${_{-A}\SC^\CC(N)}, {_{-A}\SC^{\CC^*}(N)}$ has it as well. Thus
$M_{AC}(G)$ is FP-orientable.
\end{exmp}

\begin{rem}\label{rem:NonCoFinitaryFPOriMatsExist}
  So far, the examples of FP-orientable matroids included only finite or finitary/cofinitary matroids.
  Algebraic cycle matroids, however, can have both infinite circuits and infinite cocircuits (see
  for instance \citep[Example 2.7]{infmatroids}).
\end{rem}

\subsubsection{Orthogonally Oriented Matroids that are not FP-Oriented Matroids}
The matroids and pairs of circuit and cocircuit signatures that we encountered in
Examples~\ref{exmp:OrthOriMAtWithSSCEAndNonSSCECMinor} and \ref{exmp:CMWithoutFarkas} provide
examples of orthogonally oriented matroids that are not FP-oriented matroids. To see why this
is the case, we examine both of them again and note that this additionally shows that the class
of FP-oriented matroids is a proper subclass of the class of orthogonally oriented matroids.

\begin{exmp}\label{exmp:OrthOriMAtWithSSCEAndNonSSCECMinorCont}
  The orthogonally oriented matroid $(\UC_{3,Q}, \CC_{\UC_{3,Q}}, \CC^*_{\UC_{3,Q}})$ from
  Example~\ref{exmp:OrthOriMAtWithSSCEAndNonSSCECMinor} is not an ordinary orthogonally oriented
  matroid because both of its circuit and cocircuit signatures have the strong circuit elimination
  property (CE). However, it is not an FP-oriented matroid since it has a minor which is not. As
  we have seen, the cocircuit signature induced by $\CC^*_{\UC_{3,Q}}$ on the contraction minor
  $\UC_{3,P}$ does not have the strong circuit elimination property. Thus by
  Corollary~\ref{cor:FPOrientationsImplSSCE} this minor is not an FP-oriented matroid. Another
  way to derive the same conclusion without examining minors is to show that the following two
  properties hold: no signed circuit of $\UC_{3,Q}$ is positive/negative and there exists an element
  of $Q$ that is neither contained in the support of a positive nor in the support of a negative cocircuit.
  First, consider an arbitrary signed circuit $C$ and let $\supp{C} = \{a,b,c,d\}$. The lines $a,b,c,d$
  give raise to four lexicographically positive points $a',b',c',d'$ on $S^2$. Since $Q$ is free, it is
  easy to see that there must exist a pair of elements, say $a',b'$, such that the remaining two points
  $c',d'$ lie on the same side of the great circle through $a',b'$. Now, let $Q_{a,b}$ be one of the two
  possible signed cocircuits with support $Q \setminus \{a,b\}$. It follows that $Q_{a,b}(c) = Q_{a,b}(d)$,
  i.e.\ $C$ cannot be positive/negative. To see that the second property holds, assume that $Q_{a,b}$ is a
  positive/negative cocircuit of $\UC_{3,Q}$ for some elements $a,b \in Q$. This implies that the plane
  spanned by $ab$ is the plane $\{ (x,y,z) \in \RR^3\ |\ x = 0 \}$ because otherwise $Q_{a,b}$ would not be
  positive/negative since $Q$ is $S^2$-dense. Thus either there is no positive/negative cocircuit or neither
  of the elements $a$ and $b$ is contained in a positive/negative cocircuit since $Q$ is free.
\end{exmp}

\begin{exmp}\label{exmp:CMWithoutFarkasCont}
  In Example~\ref{exmp:CMWithoutFarkas}, we have examined a finitary matroid $M$ on $E = \NN$
  and its dual $M^*$. We have observed that the circuits and cocircuits of $M$ can be signed in
  such a way that the resulting circuit and cocircuits signatures $\CC$ and $\CC^*$ both have
  the strong circuit elimination property (CE). According to
  Proposition~\ref{prop:SignedCircElimImplOrthAxioms}, the triple $(M, \CC, \CC^*)$ is thus an
  orthogonally oriented matroid. However, the pair $\CC, \CC^*$ has other interesting additional
  properties which we will examine in more detail below.\\
  First, the pair $\CC, \CC^*$ neither has the Farkas property (FP) nor the 4-painting property (4P):
  Consider the element $e = 1 \in E$ and note that $\CC$ contains no positive or negative circuits
  at all. On the other hand, $U_1$ and $-U_1$ are the only positive and negative cocircuits of $M$.
  Thus neither (FP) nor (4P) are satisfied for $e$ and therefore $(M, \CC, \CC^*)$ is not an FP-oriented
  matroid.\\
  Second, consider what happens when we replace the sets $\SC^\CC(N)$ and $\SC^{\CC^*}(N)$ from
  Definition~\ref{defn:SSIndByCircSig} by the sets $\SC^\VC(N)$ and $\SC^\WC(N)$ where $\VC$
  and $\WC$ denote the set of vectors and covectors of $M$ according to $\CC$ and $\CC^*$,
  respectively. Let $F, G \subseteq E$ such that $F \cap G = \emptyset$. For every minor
  $N = M\!/F\!\setminus\!G$ of $M$ we obtain two sets
  \[
    \SC^\VC(N) = \left\{ X|_{E(N)}\ \left|\ X \in \VC,\
                                            \supp{X} \subseteq E(N) \cup F \right.\right\},
  \]
  \[
    \SC^\WC(N) = \left\{ Y|_{E(N)}\ \left|\ Y \in \WC,\
                                            \supp{Y} \subseteq E(N) \cup G \right.\right\}.
  \]
  Similar to Remark~\ref{rem:SSIndByCircSig}, we note that if $V \in \SC^\VC(N)$, then $\supp{V}$
  is a scrawl of $N$ by Corollary~\ref{cor:StructScrawlsOfMinor}. Dually, if $W \in \SC^\WC(N)$,
  then $\supp{W}$ is a coscrawl of $N$. Additionally, $(\SC^\VC(N), \SC^\WC(N))$ is a
  {\em minor of $(\VC, \WC)$} in the sense of Definition~\ref{defn:BKOriMats}.\\
  We claim that the pair ${_{-A}\SC^\VC(N)}, {_{-A}\SC^\WC(N)}$ has the Farkas property (FP) for every
  minor $N$ of $M$ and all subsets $A \subseteq E$. We first consider the case $N = M$ and the following
  four subcases with respect to the possible shapes of $A$.
  \begin{enumerate*}
    \item If $A = \emptyset$, then every element of $E$ is contained in the support of the positive covector
      $-U_1 \circ U_2$ and there are no positive/negative vectors.
    \item If $A = E$, then every element of $E$ is contained in the support of the positive covector
      $_{-A}U_1 \circ -(_{-A}U_2)$. Again, there are no positive/negative vectors.
    \item If $A = \{ j\ |\ j \leq i \}$ or $A = \{ j\ |\ j > i \}$ for some $i \in E$, then
      $_{-A}U_i \circ {_{-A}U_{i+1}}$ or its negation is a positive covector whose support covers $E$.
      On the other hand, the reorientation on $A$ does not introduce any positive/negative vectors.
    \item If the shape of $A$ is different from the ones examined in the previous three cases, then
      $_{-A}S^{\WC}(M)$ does not contain any positive/negative covectors: if $A$ contains more than one
      element, then at least one of the two "legs" of every cocircuit intersects both $A$ and $E \setminus A$.
      Otherwise $A = \{ j \}$ for some $j > 1$. Then $U_j$ and $-U_j$ are the only two cocircuits not
      satisfying this condition but neither of those cocircuits is positive/negative. So in both cases
      there cannot be any positive/negative covectors. It remains to show that every $e \in E$ is contained
      in a positive circuit---and thus a positive vector---with respect to the reorientation on $A$.
      In the following, if $i, j \in E$ are two distinct elements, then we say that $i$ is {\em to the left of}
      $j$ and $j$ is {\em to the right of} $i$, respectively, if $i < j$. Let $e \in E$. If $e \in A$,
      then we have to consider the following three cases.
      \begin{enumerate*}
        \item There are no elements to the left of $e$ which are not in $A$. Since $A \ne E$ and $A$ does
          not have the shape $\{ j\ |\ j \leq i \}$ for some $i \in E$, there are $m \in E \setminus A, n \in A$
          such that $e < m < n$. Then $-_{-A}C_{emn}$ is a suitable positive circuit.
        \item There are elements to the left of $e$ which are not in $A$ but no element to the left of any
          of those elements is in $A$. We pick one of the elements to the left of $e$ which are not in $A$
          and call it $m$. Since $A$ does not have the shape $\{ j\ |\ j > i \}$ for some $i \in E$, there
          exists an $n \in E \setminus A$ such that $e < n$. Then $_{-A}C_{men}$ is a suitable positive circuit.
        \item There is an element $n$ to the left of $e$ which is not in $A$ and there is an element $m$ to
          the left of $n$ which is in $A$. Then $-(_{-A}C_{mne})$ is a suitable positive circuit.
      \end{enumerate*}
      If $e \notin A$, a similar case distinction shows that a suitable positive circuit containing $e$ can
      also be chosen.
  \end{enumerate*}
  This concludes the proof for the case $N = M$. Now, let $F, G \subseteq E, F \cap G = \emptyset$,
  and $N = M\!/F\!\setminus\!G$ be a proper minor of $M$. It suffices to consider the following
  three cases.
  \begin{enumerate*}
    \item $F = \emptyset$: With similar arguments as in the case of $M = N$ above we conclude that the
      pair ${_{-A}\SC^\VC(N)}, {_{-A}\SC^\WC(N)}$ has the Farkas property (FP) for
      all subsets $A \subseteq E(N)$.
    \item $F = \{ i \}$ for an $i \in E$: In this case,
      $\SC^\WC(N)$ collapses to $\{ \pm U_i|_{E(N)} \}$. If $_{-A}U_i$ is positive or negative, then
      (FP) holds trivially for the pair ${_{-A}\SC^\VC(N)}, {_{-A}\SC^\CC(N)}$.
      Otherwise, there exists an $f \in E(N)$ for every $e \in E(N)$ such that
      $(_{-A}U_i)(e) = -(_{-A}U_i)(f)$. Thus it is possible to choose a suitable $C \in \CC$
      such that $\supp{C|_{E(N)}} = \{ e, f \}$ and $_{-A}C|_{E(N)}$ is positive.
    \item $|F| \geq 2$: In this case, $\SC^\WC(N)$ is empty. If $e \in E(N)$, then a suitable
      $C \in \CC$ such that $\supp{C|_{E(N)}} = \{e\}$ can be chosen by picking elements from $F$
      accordingly. Thus (FP) is trivially satisfied for any reorientation of the pair
      $\SC^\VC(N), \SC^\WC(N)$.
  \end{enumerate*}
  Now, if the ground set of $M$ was finite, this observation would imply that $(M, \CC, \CC^*)$ is an
  oriented matroid; in particular, by the {\em Composition Theorem} \citep[Theorem 5.36]{bachemlpd},
  every vector and covector of $M$ could then be decomposed into conforming signed circuits and cocircuits,
  respectively. The latter, however, does in general not hold under similar conditions in the infinite
  case: note that the positive covector $-U_1 \circ U_2$ of $M$ cannot be decomposed into conforming signed
  cocircuits. This demonstrates the following: First, the triple $(M, \CC, \CC^*)$ is an example of an
  infinite orthogonally oriented matroid that does not allow the decomposition of covectors into conforming
  signed cocircuits even though it satisfies the seemingly strong condition that the pair
  ${_{-A}\SC^\VC(N)}, {_{-A}\SC^\WC(N)}$ has the Farkas property (FP) for every minor $N$ of $M$ and all
  subsets $A \subseteq E$. Second, a countably infinite ground set is sufficient to show this difference
  from the finite case. Finally, it is does not suffice to require that the pair $\SC^\VC(M), \SC^\WC(M)$
  has the Farkas property (FP) to conclude that the pair $\CC, \CC^*$ has it as well.
\end{exmp}

\begin{rem}
  Examples~\ref{exmp:OrthOriMAtWithSSCEAndNonSSCECMinorCont} and \ref{exmp:CMWithoutFarkasCont}
  additionally show that there cannot exist a characterization of tame FP-orientable matroids
  similar to the characterization of tame orthogonally orientable matroids given by
  Theorem~\ref{thm:OrthCharByMinors}.
\end{rem}

\section{Classes of (Oriented) Matroids: Visual Overview}\label{sec:OverviewClassesOfOriMats}
Throughout the text we have encountered numerous classes of matroids: finite, (co)finitary,
regular orthogonally oriented and/or FP-oriented. In this section, we provide a visual overview
of those classes in the shape of several diagrams. Common to these diagrams is the question how
the different classes of matroids relate to each other in terms of subclass relationships or
overlapping.\\

In the first diagram, Figure~\ref{fig:ClassesOfMatroidsWithoutFP}, FP-oriented matroids are
left out since their distribution clutters the visualization a bit. Note that we did not present
concrete examples of wild orthogonally oriented matroids. Since we conjecture that this class
actually exists, we have included it in the visualization, but choose to represent it by a dashed
line.\\

The second diagram, Figure~\ref{fig:ClassesOfMatroidsWithFP}, augments the first one by taking
the class of FP-oriented matroids into account to provide a complete overview. For the sake of
clarity, this class is spread out over the diagram a little more than necessary. (It would also
be possible to depict several parts of its distribution as one large, connected area; however,
this would in turn make it more difficult to explicitly refer to each part in the annotations
to the diagram in Table~\ref{tab:ExpColorsDiaWithFP}.) As in the case of the class of wild
orthogonally oriented matroids, we represent those subclasses of FP-oriented matroids whose
existence we consider only as a conjecture by a dashed line.\\

Finally, in Figure~\ref{fig:ClassesOfMatroidsNotes} together with Table~\ref{tab:ExpColorsDiaWithNotes},
we indicate the passages in the text where explicit reference is made to the respective parts
of the diagram from Figure~\ref{fig:ClassesOfMatroidsWithFP}.

\begin{figure}[ht]
  \centering
  \includegraphics[width=0.95\textwidth]{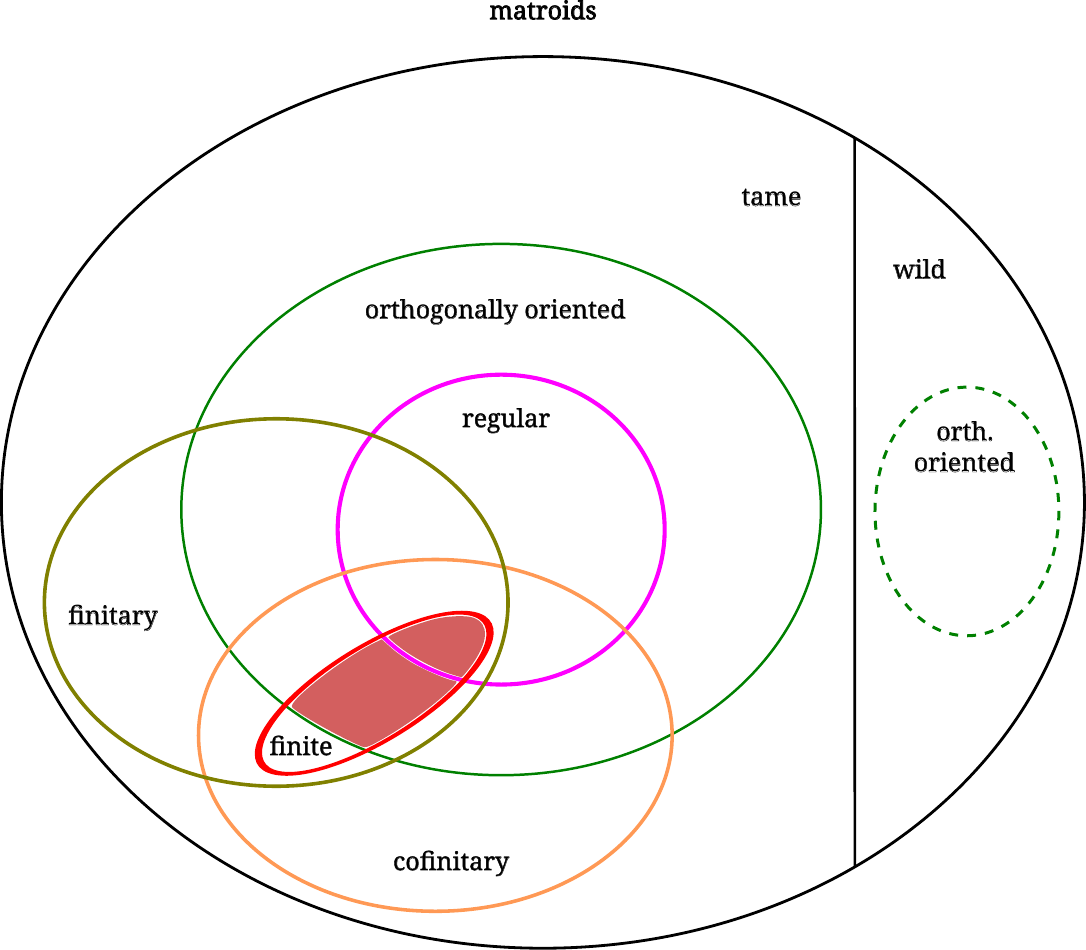}
  \caption{Classes of matroids and their (subclass) relationships (FP-oriented matroids omitted)}
  \label{fig:ClassesOfMatroidsWithoutFP}
\end{figure}

\begin{table}[ht]
  \centering
  \begin{tabular}{ll}
    \toprule
    \textbf{Color}                                                  & \textbf{Class}\\
    \midrule
    \textcolor{all-matroids}{\rule{0.5cm}{0.2cm}} black             & Matroids\\
    \textcolor{orth-ori}{\rule{0.5cm}{0.2cm}} green (solid)         & Tame orthogonally oriented matroids\\
    \textcolor{orth-ori}{\rule{0.5cm}{0.2cm}} green (dashed)        & Wild orthogonally oriented matroids {\em (conjectured)}\\
    \textcolor{regular}{\rule{0.5cm}{0.2cm}} pink                   & Regular matroids\\
    \textcolor{finitary}{\rule{0.5cm}{0.2cm}} ocher                 & Finitary matroids\\
    \textcolor{cofinitary}{\rule{0.5cm}{0.2cm}} orange              & Cofinitary matroids\\
    \textcolor{finite}{\rule{0.5cm}{0.2cm}} red                     & Finite matroids\\
    \textcolor{finite-oriented}{\rule{0.5cm}{0.2cm}} dim red (area) & Finite oriented matroids\\
    \bottomrule
  \end{tabular}
  \captionof{table}{Explanation of the colors used in Figure~\ref{fig:ClassesOfMatroidsWithoutFP}}
\end{table}

\begin{figure}[ht]
  \centering
  \includegraphics[width=0.95\textwidth]{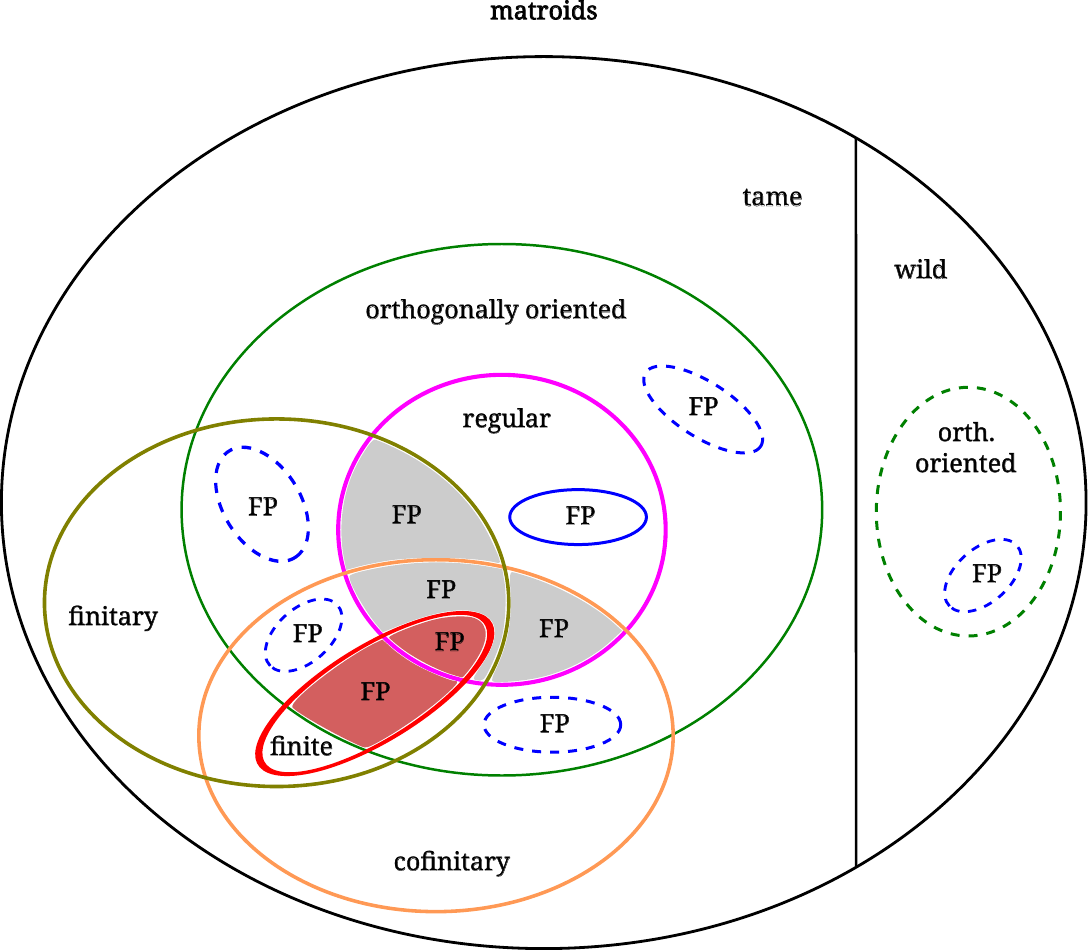}
  \caption{Conjectured distribution of the class of FP-oriented matroids}
  \label{fig:ClassesOfMatroidsWithFP}
\end{figure}

\begin{table}[ht]
  \centering
  \begin{tabular}{lp{0.6\textwidth}}
    \toprule
    \textbf{Color}                                                  & \textbf{Description}\\
    \midrule
    \textcolor{finite-oriented}{\rule{0.5cm}{0.2cm}} dim red (area) & Finite FP-oriented matroids (i.e. ordinary oriented matroids)\\
    \textcolor{fin-cofin-regular}{\rule{0.5cm}{0.2cm}} grey (area)  & Infinite regular FP-oriented matroids that are finitary, cofinitary or both\\
    \textcolor{reg-fp-ori}{\rule{0.5cm}{0.2cm}} dim blue (solid)    & Infinite regular FP-oriented matroids that are neither finitary nor cofinitary\\
    \textcolor{conj-fp-ori}{\rule{0.5cm}{0.2cm}} blue (dashed)      & Remaining subclasses of the class of FP-oriented matroids {\em (conjectured)}\\
    \bottomrule
  \end{tabular}
  \captionof{table}{Explanation of the (additional) colors used in Figure~\ref{fig:ClassesOfMatroidsWithFP}}
  \label{tab:ExpColorsDiaWithFP}
\end{table}

\begin{figure}[ht]
  \centering
  \includegraphics[width=0.95\textwidth]{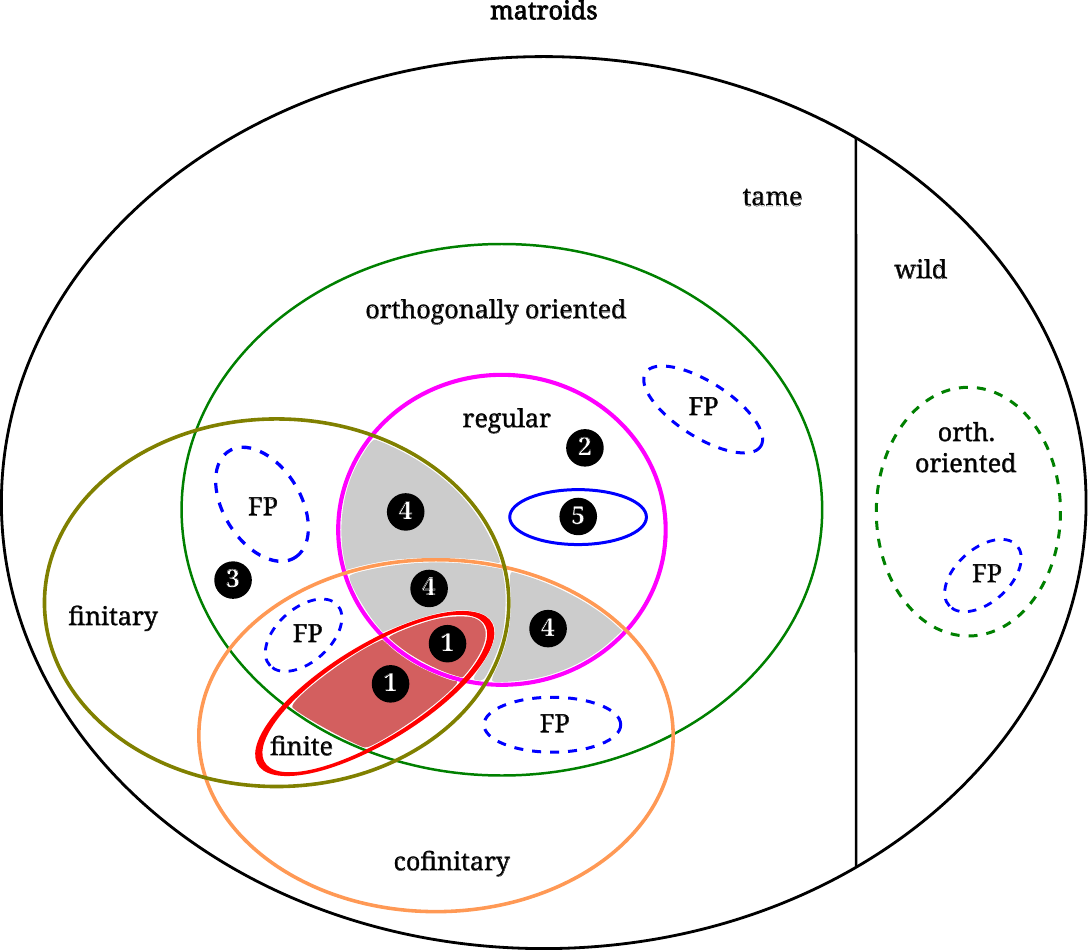}
  \caption{Which parts of the diagram from Figure~\ref{fig:ClassesOfMatroidsWithFP} are explicitly considered
           throughout the text}
  \label{fig:ClassesOfMatroidsNotes}
\end{figure}

\begin{table}[ht]
  \centering
  \begin{tabular}{rp{0.6\textwidth}>{\RaggedRight}p{0.17\textwidth}}
    \toprule
    \textbf{No.} & \textbf{Description}                                                              & \textbf{Reference}\\
    \midrule
    1            & Finite oriented matroids are orthogonally orientable as well as FP-orientable     & Example~\ref{exmp:FinOriMatsAreOrthOriMats},
                                                                                                       Example~\ref{exmp:FinOriMatsAreFPOriMats}\\
    2            & Regular matroids are orthogonally orientable                                      & Example~\ref{exmp:TameRegMatsAreOrthOriMats}\\
    3            & There are finitary orthogonally oriented matroids that are not FP-orientable      & Example~\ref{exmp:OrthOriMAtWithSSCEAndNonSSCECMinorCont},
                                                                                                       Example~\ref{exmp:CMWithoutFarkasCont}\\
    4            & (Co)Finitary regular matroids are FP-orientable                                   & Example~\ref{exmp:CofinRegMatsAreFPOriMats}\\
    5            & There are algebraic cycle matroids that are neither finitary nor cofinitary       & Example~\ref{exmp:AlgCycleMatsAreFPOriMats},
                                                                                                       Remark~\ref{rem:NonCoFinitaryFPOriMatsExist}\\
    \bottomrule
  \end{tabular}
  \captionof{table}{List of references provided by Figure~\ref{fig:ClassesOfMatroidsNotes}}
  \label{tab:ExpColorsDiaWithNotes}
\end{table}

\clearpage

\bibliographystyle{plainnat}
\bibliography{on-the-pos-def-inf-ori-mats-bib}

\end{document}